\documentclass[11pt]{article}
\PassOptionsToPackage{obeyspaces}{url}

\usepackage[hidelinks]{hyperref}

\usepackage{bookmark}
\bookmarksetup{open,numbered,depth=5}
\usepackage{amsfonts}
\usepackage{amsmath}
\usepackage{amssymb, amsthm, enumitem}
\usepackage{pgf,tikz}
\usepackage{graphicx}
\usetikzlibrary{arrows.meta}
\usepackage{bm}
\usepackage{cancel}
\usepackage{comment}
\usetikzlibrary{calc}

\usepackage{etoolbox} 
\patchcmd{\thebibliography}{\leftmargin\labelwidth}{\leftmargin\labelwidth\addtolength\itemsep{-0.3\baselineskip}}{}{}

\usepackage{mathtools}
\usepackage{xstring}

\author{
Ting-Wei Chao\thanks{Department of Mathematics, Massachusetts Institute of Technology, Cambridge, MA, USA. \texttt{twchao@mit.edu}. }
\and Zichao Dong\thanks{Extremal Combinatorics and Probability Group (ECOPRO), Institute for Basic Science (IBS), Daejeon, South Korea. Supported by the Institute for Basic Science (IBS-R029-C4). \texttt{$\{$zichao,hongliu$\}$@ibs.re.kr}. }
\and Hong Liu\footnotemark[2]
\and Xichao Shu\thanks{Institute of Mathematics, Leipzig University, Leipzig, Germany. \texttt{xichao.shu@uni-leipzig.de}. }
\and Shuaichao Wang\thanks{Center for Combinatorics and LPMC, Nankai University, Tianjin 300071, China. Supported by the China Scholarship Council. \texttt{wsc17746316863@163.com}. }
}
\date{}

\title{A dichotomy for hypergraph Zarankiewicz problems on axis-parallel boxes}

\usepackage[nameinlink]{cleveref}

\newtheorem{theorem}{Theorem}[section]

\newtheorem{lemma}[theorem]{Lemma}

\newtheorem{claim}[theorem]{Claim}
\newtheorem{proposition}[theorem]{Proposition}

\newtheorem{observation}[theorem]{Observation}

\newtheorem{question}[theorem]{Question}

\theoremstyle{definition}

\newcommand*{\eqdef}{\stackrel{\mbox{\normalfont\tiny def}}{=}} 
\newcommand*{\veps}{\varepsilon}                                
\DeclarePairedDelimiter\abs{\lvert}{\rvert}                     

\newcommand*{\N}{\mathbb{N}}                                    
\newcommand*{\R}{\mathbb{R}}                                    

\newcommand*{\cH}{\mathcal{H}}

\newcommand*{\vF}{\vec{F}}
\newcommand*{\cB}{\mathcal{B}}
\newcommand*{\cA}{\mathcal{A}}
\newcommand*{\cS}{\mathcal{S}}
\newcommand*{\cT}{\mathcal{T}}
\newcommand*{\cX}{\mathcal{X}}
\newcommand*{\cY}{\mathcal{Y}}
\newcommand*{\vn}{\vec{n}}
\newcommand*{\wz}{\widetilde{z}}
\renewcommand*{\sc}{\mathsf{c}}

\definecolor{mred}{RGB}{217, 38, 38}
\definecolor{mgreen}{RGB}{38, 217, 38}
\definecolor{mblue}{RGB}{38, 38, 217}
\definecolor{mcyan}{RGB}{38, 128, 128}
\definecolor{mmagenta}{RGB}{128, 38, 128}
\definecolor{myellow}{RGB}{128, 128, 38}
\definecolor{mgray}{RGB}{98, 98, 98}

\allowdisplaybreaks[4]

\begin{document}

\maketitle

\begin{abstract}
 We study the Zarankiewicz problem for $r$-partite, $r$-uniform intersection hypergraphs arising from $r$ families of axis-parallel boxes in $\mathbb{R}^d$ with prescribed directions $F_1, \dots, F_r \subseteq \{1, \dots, d\}$. This extends the problems studied by Chan and Har-Peled on points and $d$-dimensional boxes in $\mathbb{R}^d$, corresponding to $(F_1,F_2)=(\varnothing,[d])$, as well as by Chan, Keller, and Smorodinsky on $r$ families of $d$-dimensional boxes, corresponding to $(F_1,\dots,F_r)=([d],\dots,[d])$.

Our main result establishes a sharp dichotomy for the Zarankiewicz number in this setting: it is either $\Theta_r(tn^{r-1})$ or at least $\Omega \bigl( tn^{r-1} \cdot \frac{\log n}{\log\log n} \bigr)$, depending only on a simple set-theoretic condition on $(F_1,\dots,F_r)$, which we call $2$-\emph{coherence}. Informally, $2$-coherence captures whether the configuration contains an underlying two-dimensional incidence structure, which is precisely what gives rise to the extra polylogarithmic factor. Our proof proceeds via a sequence of reductions and a geometric slicing argument that reduces the problem to planar incidence bounds.
\end{abstract}

\section{Introduction} \label{sec:intro}

In 1951, Zarankiewicz~\cite{zarankiewicz} asked for the maximum number $z(n; t)$ of edges in a bipartite graph on $2n$ vertices and parts of size $n$ containing no copy of $K_{t, t}$. The classical K\H{o}v\'{a}ri--S\'{o}s--Tur\'{a}n theorem~\cite{kovari_sos_turan} implies $z(n; t) = O \bigl( n^{2 - \frac{1}{t}} \bigr)$. Determining its tightness remains a major open problem, though several algebraic incidence constructions~\cite{kollar_ronyai_szabo,alon_ronyai_szabo,bukh2015,bukh2024} show it is tight for $K_{s, t}$ in place of $K_{t, t}$ when $s \gg t$. 

In 1964, Erd\H{o}s~\cite{erdos1964} proposed a natural hypergraph analogue: determine the maximum number $z_r(n; t)$ of hyperedges in an $r$-uniform, $r$-partite hypergraph on $rn$ vertices with parts of size $n$ that contains no copy of $K^{(r)}_{t, \dots, t}$. In the same paper, he proved the upper bound $z_r(n; t) = O \bigl( n^{r - \frac{1}{t^{r-1}}} \bigr)$, which generalizes the K\H{o}v\'{a}ri--S\'{o}s--Tur\'{a}n theorem. Analogous tightness results~\cite{ma_yuan_zhang, chen_liu_ye} are known when the largest parameter $t_r$ is much larger than $t_1, \dots, t_{r-1}$, for $K_{t_1, \dots, t_r}^{(r)}$ in place of $K_{t, \dots, t}^{(r)}$. 

\paragraph{Zarankiewicz problems on axis-parallel boxes.} Bipartite graphs naturally model geometric incidences or intersections. Consider the incidence graph $G$ between $n$ points and $n$ lines, where an edge connects a point $p$ and a line $\ell$ if $p \in \ell$. Since two points determine a unique line, $G$ contains no $K_{2,2}$. The K\H{o}v\'{a}ri--S\'{o}s--Tur\'{a}n theorem then yields $e(G) = O(n^{3/2})$, while the Szemer\'{e}di--Trotter theorem~\cite{szemeredi_trotter}, a central result in incidence geometry, improves this bound to the tight $O(n^{4/3})$. The incidence problems are closely related to algorithmic problems in theoretical computer science such as range searching \cite{agarwal_erickson} and the so-called Hopcroft's problem \cite{chan_zheng}. We refer to \cite{mustafa_pach,chan_har-peled,chan_keller_smorodinsky} for related studies on simplices, balls, pseudo-disks, and other geometric objects. 

This paper studies Zarankiewicz problems for incidences between axis-parallel boxes. For any positive integer $n$, we use the standard notation $[n] \eqdef \{1, \dots, n\}$. Throughout the whole paper, every \emph{box} refers to an \emph{axis-parallel box}. We require several notations. 

In the $d$-dimensional Euclidean space $\R^d$, an \emph{axis-parallel box} is the product of $d$ closed intervals $\bm{b} = [a_1, b_1] \times \dots \times [a_d, b_d]$, where $a_i \le b_i \, (i = 1, \dots, d)$ are real numbers. If $a_i = b_i$, then $\bm{b}$ \emph{collapses} along the $i$-th coordinate axis. We call the set $\bigl\{ i \in [d] : a_i < b_i \bigr\}$ the \emph{direction set} of $\bm{b}$. For example, the box $\{0\} \times [e, \pi] \times \{\frac{2}{3}\}$ in $\R^3$ collapses in the $1$-st and $3$-rd coordinates and has direction set $\{2\}$. In the literature, axis-parallel boxes usually exclude the case $a_i = b_i$. However, we allow such degeneracies and study a strictly more general class of configurations. We call boxes with direction set $[d]$ (i.e., without any $a_i = b_i$ for $i = 1, \dots, d$) \emph{full boxes}. 

Let $d \ge 1, \, r \ge 2$. In $\R^d$, every $r$-\emph{direction}-\emph{vector} $\vF \eqdef (F_1, \dots, F_r)$ refers to an $r$-tuple of subsets of $[d]$. An $r$-tuple of sets\footnote{Hereafter boxes in each set are not necessarily distinct, and so formally all the box sets we consider are multisets.} of boxes $(\cB_1, \dots, \cB_r)$ is an $\vF$-\emph{family} if each box in $\cB_j$ has direction set $F_j$. For $\vF$-family $(\cB_1, \dots, \cB_r)$, we study the $r$-uniform $r$-partite intersection hypergraph $\cH = (V, E)$, where $V \eqdef \cB_1 \cup \dots \cup \cB_r$ and $\{\bm{b}_1, \dots, \bm{b}_r\} \in E$ if and only if $\bm{b}_1 \cap \dots \cap \bm{b}_r \ne \varnothing$. For $r$-\emph{size}-\emph{vector} $\vn = (n_1, \dots, n_r)$ of positive integers, denote by $z_d^{\vF}(\vn; t) = z_d^{\vF}(n_1, \dots, n_r; t)$ the maximum number of edges in $\cH$, provided that $|\cB_j| = n_j$ for $j = 1, \dots, r$ and $\cH$ forbids the complete $r$-partite subgraph $K_{t, \dots, t}^{(r)}$. We also introduce the abbreviated notation $z_d^{\vF}(n; t) \eqdef z_d^{\vF} \bigl( \underbrace{n, \dots, n}_r; t \bigr)$. 

The Zarankiewicz problem between points and full boxes in $\R^d$, where $\vF = (\varnothing, [d])$, has been extensively studied. In the complexity analysis of orthogonal range reporting, Chazelle~\cite{chazelle} constructed random points and boxes in $\R^d$ to deduce $z_d^{\vF}(n; 2) = \Omega \bigl( n(\frac{\log n}{\log\log n})^{d-1} \bigr)$. This construction was later reformulated in mathematical language and generalized by Tomon~\cite{tomon} and by Chan--Har-Peled~\cite{chan_har-peled} to $z_d^{\vF}(n; t) = \Omega \bigl( tn(\frac{\log n}{\log\log n})^{d-1} \bigr)$. Basit, Chernikov, Starchenko, Tao, and Tran~\cite{basit_etal} obtained the upper bound $z_d^{\vF}(n; t) = O_{d, t} \bigl( n (\log n)^{2d} \bigr)$. This upper bound was later improved by Chan--Har-Peled~\cite{chan_har-peled} to $z_d^{\vF}(n; t) = O_d \bigl( tn(\frac{\log n}{\log\log n})^{d-1} \bigr)$, matching the lower bound and thereby resolving the problem. 

The Zarankiewicz problem for full boxes has also attracted much attention. Keller and Smorodinsky~\cite{keller_smorodinsky} established $z_2^{\vF}(n;t) = \Theta_t \bigl( n \frac{\log n}{\log\log n}\bigr)$ for rectangles, where $\vF = ([2], [2])$. Chan, Keller, and Smorodinsky~\cite{chan_keller_smorodinsky_arxiv} later extended this to incidence $r$-hypergraphs in $\R^d$ and improved the dependence on $t$ to linear, obtaining the sharp bound $z_d^{\vF}(n; t) = \Theta_{r, d} \bigl( tn^{r-1}(\frac{\log n}{\log\log n})^{d-1} \bigr)$ for $\vF = \bigl( \underbrace{[d], \dots, [d]}_{r} \bigr)$. 

For an arbitrary $\vF$, the lower bound $z_d^{\vF}(n; t) \ge (t-1) \cdot n^{r-1}$ is straightforward. Indeed, suppose each box in $\cB_1, \dots, \cB_{r-1}$ contains the origin. In $\cB_r$, choose $t-1$ boxes containing the origin and place the remaining $n - t + 1$ boxes far away so that they intersect no others. Then the intersection hypergraph is $K_{n, \dots, n, t-1}^{(r)}$, which contains no $K_{t, \dots, t}^{(r)}$, and thus yields $(t-1)\, n^{r-1}$ hyperedges. 

In some cases, this lower bound is tight. For example, for intervals $(\{1\}, \{1\})$ in $\R^1$ and for planar vertical and horizontal segments $(\{1\}, \{2\})$ in $\R^2$, it was previously established in~\cite{chan_har-peled,keller_smorodinsky} that
$z_1^{(\{1\}, \{1\})}(n; t) = \Theta(tn)$ and $z_2^{(\{1\}, \{2\})}(n; t) = \Theta(tn)$. On the other hand, an additional logarithmic factor is sometimes unavoidable: for the aforementioned $(\varnothing, [2])$ in $\R^2$ and $([3], [3], [3])$ in $\R^3$, it is known~\cite{chan_har-peled,chan_keller_smorodinsky} that
$z_2^{(\varnothing, [2])}(n; t) = \Theta \bigl( tn \tfrac{\log n}{\log\log n} \bigr)$ and
$z_3^{([3], [3], [3])}(n; t) = \Theta \bigl( tn^2 (\tfrac{\log n}{\log\log n})^2 \bigr)$. 

\smallskip

Motivated by these results on $z_d^{\vF}(n; t)$ for different direction-vectors $\vF$, a natural question arises:
\begin{center}
    \emph{Under what conditions on $\vF$ does a polylogarithmic factor emerge in $z_d^{\vF}(n; t)$?}
\end{center}
Our main finding is that this phenomenon admits a sharp dichotomy depending only on a simple set-theoretic property of the direction sets. Informally, a polylogarithmic factor emerges in $z_d^{\vF}(n; t)$ when all but at most one of the families share at least two common coordinate directions. Formally, we refer to an $r$-direction-vector $\vF = (F_1, \dots, F_r)$ in $\R^d$ as $2$-\emph{coherent} if there is some $k \in [r]$ such that $\bigl| \bigcap_{j \in [r] \setminus \{k\}} F_j \bigr| \ge 2$, and \emph{non}-$2$-\emph{coherent} otherwise. The theorem below formalizes this dichotomy. 

\begin{theorem} \label{thm:main}
    Let $d \ge 1, \, r \ge 2, \, t \ge 2$ be integers. For any $r$-direction-vector $\vF$ in $\R^d$, we have
    \[
    z_d^{\vF} (n; t) = 
    \begin{cases}
        \Theta_r (t n^{r-1}) &\text{if $\vF$ is non-$2$-coherent}, \\
        \Omega \bigl( t n^{r-1} \frac{\log n}{\log\log n} \bigr) \qquad &\text{if $\vF$ is $2$-coherent}. 
    \end{cases}
\]
\end{theorem}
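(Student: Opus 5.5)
The plan is to prove the two halves separately: the lower bound in the $2$-coherent case by embedding a known planar construction, and the upper bound in the non-$2$-coherent case by a chain of reductions that ends at one-dimensional, interval-type incidence bounds.

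\textbf{Lower bound, $2$-coherent case.} Fix $k$ and two coordinates $i_1\ne i_2$ lying in $\bigcap_{j\ne k}F_j$. We embed the planar point--rectangle construction of Chan--Har-Peled, which shows $z_2^{(\varnothing,[2])}(m;t)=\Omega\bigl(tm\frac{\log m}{\log\log m}\bigr)$.
\begin{itemize}
\item Let the boxes of $\cB_k$ play the role of points in the $(i_1,i_2)$-plane. A box of $\cB_k$ may be extended in directions of $F_k$; its projection to the plane is still a point, or a segment, or a rectangle. To keep it point-like, we make its extents in those directions tiny, so that it behaves like a point up to perturbation.
\item Choose one family $j_0\ne k$ to carry the rectangles of the construction, projected onto the $(i_1,i_2)$-plane.
\item Every other family $j\notin\{k,j_0\}$ consists of $n$ huge boxes covering everything relevant. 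All families $j \neq k$ contain $i_1,i_2$ in their direction sets, so this is allowed.
\item In every coordinate outside $\{i_1,i_2\}$, place all boxes so they share a common point, e.g.\ the origin or intervals through it.
\end{itemize}
Hyperedges then correspond to point--rectangle incidences times $n^{r-2}$, giving $\Omega\bigl(tn\frac{\log n}{\log\log n}\bigr)\cdot n^{r-2}$ edges. For the forbidden structure: a $K^{(r)}_{t,\dots,t}$ would contain a $K_{t,t}$ between the points and the rectangles, which the planar construction excludes. The only care needed is with the tiny extents of the $\cB_k$ boxes, which must not create extra incidences. Choosing them smaller than all gaps in the construction handles this.

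\textbf{Upper bound, non-$2$-coherent case.} The lower bound $(t-1)n^{r-1}$ is the trivial construction, so we need $O_r(tn^{r-1})$. We proceed in three steps.
\begin{itemize}
\item \emph{Reduce to $r=2$.} Fix the boxes of families $\cB_j$, $j\ge3$, and iterate. More precisely, induct on $r$ by intersecting: for fixed $\bm b_r$, the boxes $\bm b\cap\bm b_r$ with $\bm b\in\cB_{r-1}$ form a new family of boxes. However, their direction sets become $F_{r-1}\cap F_r$, which is not constant for other families and is not uniformly controlled. A cleaner route is a pigeonhole/degree argument: hypergraphs with at least $Ctn^{r-1}$ edges contain a link structure forcing $K_t$ in some $2$-partite projection. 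Non-$2$-coherence is preserved under taking intersections of families, since $|\bigcap_{j\ne k}F_j|\le1$ for all $k$.
\item \emph{Reduce coordinates.} The intersection of boxes is nonempty iff it is nonempty in each coordinate, i.e.\ an interval-intersection condition per coordinate. Non-$2$-coherence means that for each $k$, the families other than $k$ jointly extend in at most one common coordinate. In every other coordinate, some family $j\ne k$ is a single value, so those coordinates act as equalities.
\item \emph{Slice.} Slicing by these equality coordinates partitions the configuration into classes. Within each class, incidences reduce to one-dimensional interval structures, or to configurations like the planar vertical/horizontal segments $(\{1\},\{2\})$, which have linear bounds $O(tn)$ by \cite{chan_har-peled,keller_smorodinsky}. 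Summing over classes with convexity gives $O_r(tn^{r-1})$.
\end{itemize}

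\textbf{Main obstacle.} The hardest step will be making the slicing argument work for general $r$, with the constant depending only on $r$ and not on $d$. Different coordinates are "pinned" by different families, so no single projection works; this calls for a careful case analysis, or an induction on $d$ that merges coordinates with identical membership patterns across $F_1,\dots,F_r$. I would try first to show that coordinates with the same membership pattern can be merged into one coordinate, leaving at most $2^r$ coordinates.
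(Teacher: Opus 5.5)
Your lower-bound construction matches the paper's: the planar point--rectangle configuration sits in the $(i_1,i_2)$-plane, $\cB_k$ is shrunk to near-points, one family carries the rectangles, and the remaining families are huge. That half is fine.

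\textbf{The gap is in the upper bound, in the case the paper spends most of its effort on.} Slicing by hyperplanes $\{x_c=s\}$ only sums correctly when $x_c$ is pinned by at least \emph{two} families, say $\cB_1$ and $\cB_2$. Then both families are partitioned, every term of $g_t(n_1^{(s)},n_2^{(s)},n_3,\dots,n_r)$ contains $n_1^{(s)}$ or $n_2^{(s)}$, and $\sum_s n_1^{(s)}n_2^{(s)}\le n_1n_2$ gives $\sum_s g_t(\cdots)\le g_t(\vn)$ with no loss in the constant. This is the paper's induction on $d$, and it is why no merging of coordinates is needed to keep the constant independent of $d$.

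Non-2-coherence only tells you that, for each $k$, all coordinates but one are pinned by \emph{some} family $j\ne k$. If $x_c$ is pinned by $\cB_j$ alone, the slices partition only $\cB_j$. Every other family reappears in every slice, so the term $t\prod_{i\ne j}n_i$ is paid once per slice, up to $n_j$ times.

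Suppose no coordinate is pinned by two families. Then the sets $[d]\setminus F_j$ are pairwise disjoint, and non-2-coherence forces each to have size at most $1$ and their union to be $[d]$. Up to relabelling, this is the canonical configuration $F_j=[d]\setminus\{j\}$, plus full families that can be absorbed by embedding into $\R^r$. Already for $\vF_3=(\{2,3\},\{1,3\},\{1,2\})$, three families of rectangles perpendicular to the three axes, your plan has no move. Slicing by $\{x=s\}$ leaves rectangles, $z$-segments and $y$-segments in the $yz$-plane. That is a three-family problem which is neither one-dimensional nor a two-family segment problem; it is again of canonical type. Moreover, summing over the slices counts the $\cB_2$--$\cB_3$ term $n_1$ times.

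Your ``reduce to $r=2$'' step does not rescue this. It is asserted without an argument. Its natural version pairs the intersections $\bm b_1\cap\bm b_2$ with $\cB_3$, giving $z$-segments against rectangles perpendicular to $z$. That is a $2$-coherent bipartite problem whose generic Zarankiewicz bound carries the $\frac{\log n}{\log\log n}$ factor, so the grid structure of these intersections must be kept.

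\textbf{What is missing is an argument for the canonical case, which the paper supplies in three steps.}
\begin{enumerate}
\item Perturb the family so that distinct boxes of each $\cB_i$ lie in distinct hyperplanes; the paper calls such families separated.
\item Induct on the graph on $[d]$ recording which pairs $(\cB_i,\cB_j)$ do not intersect completely. Use the Chan--Keller--Smorodinsky biclique covers to delete edges of this graph, with separate recursions when it contains two disjoint edges or a triangle. Continue until it is a star, so that $\cB_1,\dots,\cB_{d-1}$ pairwise intersect completely.
\item In this restricted case, rescale so that the intersections $\bm b_1\cap\dots\cap\bm b_{d-1}$ are segments parallel to $\vec e_d$ over the grid $[n_1]\times\cdots\times[n_{d-1}]$. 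Slice by the $2$-planes through the grid spanned by $\vec e_1+\dots+\vec e_{d-1}$ and $\vec e_d$.
\begin{itemize}
\item In each such plane, these segments and the traces of $\cB_d$ are perpendicular segments. The planar bound $27t(m+n)$ applies, since a $K_{t,t}$ there lifts to a $K^{(d)}_{t,\dots,t}$ by the Helly property.
\item Each grid segment lies in exactly one plane, and there are at most $g_t(n_1,\dots,n_{d-1})/t$ planes. Summing gives $27g_t(\vn)$.
\end{itemize}
\end{enumerate}
Without an ingredient of this kind, the $O_r(tn^{r-1})$ bound in the non-2-coherent case is not established.
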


In particular, this yields a complete characterization of those direction-vectors for which the trivial lower bound $\Omega(tn^{r-1})$ is asymptotically optimal, identifying $2$-coherence as the threshold for the emergence of superlinear behavior. The four examples above fit this dichotomy perfectly: the interval case $(\{1\}, \{1\})$ in $\R^1$ and the planar vertical-horizontal segment case $(\{1\}, \{2\})$ in $\R^2$ are non-$2$-coherent and satisfy the linear bound $\Theta(tn)$, whereas the point-box case $(\varnothing, [2])$ in $\R^2$ and the full-box case $([3], [3], [3])$ in $\R^3$ are $2$-coherent and exhibit an extra polylogarithmic factor.

At a high level, our proof reveals that the appearance of a polylogarithmic factor is driven by an underlying two-dimensional incidence structure. When $\vF$ is $2$-coherent, such a structure is explicitly present, allowing us to embed planar constructions with polylogarithmic growth and obtain the lower bound. In contrast, the main difficulty lies in the non-$2$-coherent case, where no such structure is a priori visible. To overcome this, we develop a sequence of reductions that gradually impose geometric structure on the problem. A key role is played by the \emph{canonical} case $\vF = ([d] \setminus \{1\}, \dots, [d] \setminus \{d\})$, where each box in $\cB_i$ is a $(d-1)$-dimensional box perpendicular to the $i$-th coordinate direction. This case extends earlier work of Chan--Keller--Smorodinsky~\cite{keller_smorodinsky,chan_keller_smorodinsky} on vertical and horizontal segments in $\R^2$, corresponding to $z_2^{(\{1\}, \{2\})}(n; t)$, and serves as a central intermediate step in our argument. The key technical step is a geometric slicing argument that decomposes high-dimensional intersections into collections of planar incidence problems, to which sharp two-dimensional bounds can be applied. This ultimately allows us to recover the optimal $O(tn^{r-1})$ upper bound. We defer a more detailed proof sketch to~\Cref{sec:upper_sketch}. This reduction framework, which isolates and exploits hidden low-dimensional structures, may be of independent interest for other geometric Zarankiewicz-type problems.

\paragraph{Related Zarankiewicz problems.}
When $\vF$ is non-$2$-coherent, our result shows that the upper bound matches the trivial lower bound asymptotically, hence yields a tight characterization of the extremal behavior in this regime. This contrasts with the general theory of semi-algebraic incidence graphs, where one typically cannot obtain bounds of order $n^{r-1}$. Recall that a bipartite graph $G$ is \emph{semi-algebraic} in $\R^d$ if its vertex sets are point sets $P, Q \subseteq \R^d$, and an edge $(p, q)$ is present if and only if it satisfies a Boolean combination of finitely many polynomial equalities and inequalities. For such graphs, in a seminal work Fox, Pach, Sheffer, Suk, and Zahl~\cite{fox_pach_sheffer_suk_zahl} proved that if $G$ is $K_{t, t}$-free, then $e(G) = O \Bigl( n^{\frac{2d}{d+1} + \veps} \Bigr)$, independently of $t$. The $\varepsilon$-loss was later removed by Tidor and Yu~\cite{tidor_yu}, who showed $e(G) = O \Bigl( n^{\frac{2d}{d+1}} \Bigr)$, with extensions to incidence hypergraphs as well. These bounds remain significantly larger than the linear-type bound $O(tn^{r-1})$ in our non-$2$-coherent setting, reflecting the greater rigidity of incidences involving axis-parallel boxes. 

\paragraph{Paper organization.} We first present the lower bound constructions with logarithmic factors in \Cref{sec:lower_proof}. The deduction of the upper bound in \Cref{thm:main} proceeds via several reductions: we introduce the necessary terminology and outline the main ideas in \Cref{sec:upper_sketch}, followed by full proofs in \Cref{sec:upper_proof}. We conclude with future directions for Zarankiewicz box-problems in \Cref{sec:remark}. 

\section{The lower bound constructions} \label{sec:lower_proof}

In this section, we prove the lower bound in Theorem \ref{thm:main} for the $2$-coherent case. 

\begin{theorem} \label{thm:lower}
    Let $d \ge 2, \, r \ge 2, \, t \ge 2$ be integers and $\vF$ be a $2$-coherent $r$-direction-vector in $\R^d$. Then $z_d^{\vF}(n; t) = \Omega \bigl( tn^{r-1} \frac{\log n}{\log\log n} \bigr)$ holds for every positive integer $n$. 
\end{theorem}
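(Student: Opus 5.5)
Since $\vF$ is $2$-coherent, we fix $k \in [r]$ and two distinct coordinates, say $1,2$, lying in $\bigcap_{j \ne k} F_j$. The plan is to embed the planar point--rectangle construction into the coordinates $1,2$, padding the other $r-2$ families so that the hypergraph is a blow-up of a bipartite graph. We start from the known planar bound $z_2^{(\varnothing,[2])}(m;t) = \Omega\bigl(tm\frac{\log m}{\log\log m}\bigr)$ of Chan--Har-Peled (via Chazelle's random construction). It provides $m$ points $P$ and $m$ rectangles $Q$ in $\R^2$ whose incidence graph is $K_{t,t}$-free and has $\Omega(tm\frac{\log m}{\log\log m})$ edges. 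We may assume the points and rectangle sides are in general position, for instance with distinct coordinates.

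\textbf{Case $r = 2$.} Say $k=2$, so $\{1,2\} \subseteq F_1$ and $F_2$ is arbitrary. We realize each point $p=(p_1,p_2)\in P$ as a box in $\cB_2$ with direction set $F_2$. Coordinates $1,2$ are set to the singleton $p_i$ if $i\notin F_2$ and to a tiny interval $[p_i,p_i+\veps]$ if $i\in F_2$. Every other coordinate $i\in F_2$ receives $[0,1]$, and every $i\notin F_2$ receives $\{0\}$. Each rectangle $R=[a_1,b_1]\times[a_2,b_2]$ becomes a box in $\cB_1$ with $[a_1,b_1]\times[a_2,b_2]$ in coordinates $1,2$, $[-1,1]$ in the other coordinates of $F_1$, and $\{0\}$ elsewhere. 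In any coordinate $i\ge 3$ the two intervals both contain $0$, because a coordinate in neither set is $\{0\}$ on both sides. Hence intersection is decided by the first two coordinates. By general position and $\veps$ small, the $\veps$-thickening changes no incidence, so the intersection graph equals the point--rectangle incidence graph.

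\textbf{Case $r \ge 3$.} The families $\cB_j$ for $j\ne k$ all contain directions $1,2$. We pick one of them, $j_0 \ne k$, to carry the rectangles (as above, with $[-1,1]$ padding in its other directions), and let $\cB_k$ carry the thickened points. For each remaining index $j\notin\{j_0,k\}$ we take $n$ huge boxes with direction set $F_j$. These use $[-M,M]$ in coordinates $1,2$ (legitimate since $1,2\in F_j$), $[-1,1]$ in other coordinates of $F_j$, and $\{0\}$ elsewhere. Such a box contains the whole relevant region, and every coordinate $i\ge3$ still has $0$ in common across all $r$ families. Thus $\{\bm b_1,\dots,\bm b_r\}$ is an edge iff the point from $\cB_k$ lies in the rectangle from $\cB_{j_0}$. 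The hypergraph is the planar incidence graph $G$ blown up by complete $n$-sets in the other $r-2$ parts, so it has $n^{r-2}e(G)$ edges. A copy of $K^{(r)}_{t,\dots,t}$ would give a $K_{t,t}$ in $G$, which does not exist.

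The step I expect to need care is the parameter bookkeeping, since it must hold for every $n$ with the multiplicity-$t$ dependence intact. Taking $m=n$ gives $n^{r-2}\cdot\Omega\bigl(tn\frac{\log n}{\log\log n}\bigr)$, as required. If the cited planar construction only holds for $t$ up to some range or for large $n$, we fall back on two things. For small $n$, or for $t$ comparable to $n$, the trivial bound $(t-1)n^{r-1}$ already suffices up to constants once $n$ is bounded. Alternatively, we take $t/2$ disjoint shifted copies of the $t=2$ construction, which keeps the graph $K_{t,t}$-free while multiplying the edge count by $\Theta(t)$. We must also double-check that the thickening by $\veps$ preserves $K_{t,t}$-freeness, which holds because the incidences are unchanged.
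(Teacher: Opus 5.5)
Your main construction is correct and is essentially the paper's proof. The paper does the same embedding in two stages. It first works in $\R^2$ with $F_2=\dots=F_r=\{1,2\}$, thickening the points into tiny segments or squares according to $F_1$ and letting $\cB_3,\dots,\cB_r$ be huge boxes. It then lifts to $\R^d$ by extending each box along $F_j\setminus\{1,2\}$ from the plane $x_3=\dots=x_d=0$. You do this in one step, padding with intervals that contain $0$. You also cite the $t$-dependent planar bound directly, whereas the paper derives it from the $K_{2,2}$-free construction of Basit et al.

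Your fallback, however, contains an error that matters for exactly that derivation. Taking about $t/2$ \emph{disjoint} shifted copies of a $K_{2,2}$-free construction does not gain a factor $\Theta(t)$ at fixed part size $n$. With $s$ disjoint copies whose parts have size $n/s$, you get only $s\cdot\Theta\bigl(\frac{n}{s}\cdot\frac{\log(n/s)}{\log\log(n/s)}\bigr)$ edges, so the average degree does not grow.

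What works, and what the paper does, is to superimpose copies. Start from a $K_{2,2}$-free construction with parts of size $n/(t-1)$, and replace every point and every rectangle by $t-1$ coincident copies (multisets are allowed). The edge count is multiplied by $(t-1)^2$. Any $t$ vertices on one side come from at least two distinct originals, so a $K_{t,t}$ would produce a $K_{2,2}$ in the base graph. This gives $\Omega\bigl(tn\frac{\log(n/t)}{\log\log(n/t)}\bigr)$, which matches the target whenever $t\le n^{1-\delta}$.

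Your other fallback remark is also off. When $t$ is comparable to $n$, the trivial bound does not reach $tn^{r-1}\frac{\log n}{\log\log n}$. Nothing can, since that quantity exceeds the trivial maximum $n^r$. So the statement must be read with $t$ small relative to $n$, a point the paper also leaves implicit.
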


We remark that \Cref{thm:lower} assumes $d \ge 2$ since no $2$-coherent $r$-direction-vector exists when $d = 1$.
Our construction is based on the following known construction in $\R^2$ for the $2$-direction-vector $\vF \eqdef (\varnothing, \{1, 2\})$.
Constructions in higher dimensions were obtained by Chazelle~\cite{chazelle} in his study of data structures. Indeed, he constructed point sets and $d$-dimensional boxes in $\R^d$ to show that $z_d^{\vF}(n; t) = \Omega_d \Bigl( tn \bigl( \frac{\log n}{\log\log n} \bigr)^{d-1} \Bigr)$. Such constructions were later reformulated more explicitly by Chan and Har-Peled~\cite[Appendix~B]{chan_har-peled} and Tomon~\cite{tomon}. For our purposes, it suffices to consider the simpler two-dimensional constructions, and we therefore refer to the work of Basit et al.~\cite{basit_etal}.

\begin{theorem} \label{thm:point_rect}
   Suppose $t \ge 2$ and $n \in \N_+$. In $\R^2$, there exist a set $\cB_1$ of $n$ points and a set $\cB_2$ of $n$ full boxes, such that the intersection graph $G$ between $\cB_1, \cB_2$ is $K_{t, t}$-free with $e(G) = \Omega \bigl( tn\frac{\log n}{\log\log n} \bigr)$.
\end{theorem}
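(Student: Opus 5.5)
The plan is to build the configuration explicitly by a multi-scale (dyadic-type) grid construction, in the spirit of Chazelle, Chan--Har-Peled and Basit et al. First I will settle the case $t=2$. Then I will pass to general $t$ by replacing every point with $t-1$ copies, slightly perturbed so that the incidences are preserved.

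\textbf{Step 1: the $K_{2,2}$-free construction.} Choose an integer base $b\approx \log n/\log\log n$ and a depth $k$ with $b^{k}\approx n$, so that $k\approx \log n/\log\log n$ as well. Since $\log n/\log\log n$ is only claimed up to a constant, I will choose $b$ as $(\log n)^{c}$ and then adjust $k$.

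For the points, take $P$ to be the $b$-ary bit-reversal set
\[
P=\Bigl\{\bigl(i,\operatorname{rev}_b(i)\bigr): 0\le i<b^{k}\Bigr\}\subseteq[0,b^k)^2 ,
\]
where $\operatorname{rev}_b$ reverses the $k$ base-$b$ digits. This is a net in the following sense. For each $j\in\{0,\dots,k\}$, every canonical rectangle of dimensions $b^{j}\times b^{k-j}$ whose corners are aligned to multiples of those side lengths contains exactly one point of $P$.

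For the rectangles, at each level $j\in\{1,\dots,k-1\}$ I take rectangles of dimensions $b^{j+1}\times b^{k-j}$, again aligned. Each such rectangle contains exactly $b$ points. Some sub-selection or thinning of the levels will be needed so that the total number of rectangles is $O(n)$ and the total edge count is about $nb$ times a constant, or $nk$. Concretely, I will take about $n/(bk)$ rectangles at each of the $k$ levels, each containing about $b$ points. This gives $k\cdot(n/(bk))\cdot b\cdot$ (a factor to be balanced) edges. The cleanest version is Basit et al.'s: it yields $\Theta(n\log n/\log\log n)$ edges with $|P|,|\cB_2|\le n$. I will track the parameters so that each point lies in about $k$ rectangles and the number of rectangles is about $n$.

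\textbf{Step 2: $K_{2,2}$-freeness (the main obstacle).} I need to show that two distinct rectangles share at most one point. Take two rectangles at the same level. They are aligned and disjoint, or they coincide, so they share nothing. Now take rectangles at levels $j<j'$. Their intersection is contained in an aligned box of dimensions at most $b^{j+1}\times b^{k-j'}$. Since $j'\ge j+1$, this box fits inside a canonical $b^{j+1}\times b^{k-j-1}$ cell, which contains exactly one point. The delicate part is arranging the widths so that this nesting argument holds exactly. If aligned grids conflict, I fall back on perturbations or general position. The count in Step 1 also needs care, because picking all aligned rectangles gives too many rectangles. I expect to randomly or sparsely choose the levels and positions, keeping only rectangles in a $1/b$-fraction of positions. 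I would first try the explicit choice from Basit et al. and verify the balance $b^{b}\approx n$, which gives $b\approx\log n/\log\log n$.

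\textbf{Step 3: blow-up to general $t$.} Start from a $K_{2,2}$-free configuration with $n'=\lfloor n/(t-1)\rfloor$ points and $n'$ rectangles and $\Omega(n'\log n'/\log\log n')$ edges. Replace each point by $t-1$ copies; multisets are allowed, as the paper's footnote notes. Any two rectangles still share at most $t-1$ points, so the graph is $K_{2,t}$-free and in particular $K_{t,t}$-free. The rectangle side is then topped up to $n$ by adding far-away rectangles.

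For the edge count, each rectangle now contains $(t-1)$ times as many points, so there are $\Omega(t\,n'\log n/\log\log n)$ edges. That is only order $n$, not $tn$, so I must also enlarge the rectangle side. To do this, I take $t-1$ slightly shifted copies of each rectangle containing the same points, and place the point copies in general position so that the copies of each rectangle together contain the same point-copies. Two rectangles that are copies of the same original share $(t-1)\cdot b$ points. A set of $t$ rectangles shares fewer than $t$ points unless all $t$ are copies of the same original, and there are only $t-1$ copies, so no $K_{t,t}$ appears. The final count is $n'(t-1)\cdot(t-1)\cdot\Omega(\log n/\log\log n)\,/\,\ldots$, which equals $\Omega(tn\log n/\log\log n)$ after setting $n'=n/(t-1)$.
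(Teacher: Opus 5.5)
Your proposal is essentially the paper's proof. The paper cites the $K_{2,2}$-free construction of Basit et al.\ and then takes $t-1$ identical copies of both the points and the boxes, each original family having size $n/(t-1)$. Your final version of Step~3 is exactly that argument, including the check that $t$ box-copies coming from at least two originals share fewer than $t$ point-copies.

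If you want your explicit Step~1 to stand on its own, note that no thinning is needed, and the thinning you suggest would actually destroy the logarithmic gain. Take all aligned $b^{j+1}\times b^{k-j}$ rectangles for $j=0,\dots,k-1$, shrunk slightly so they are closed and pairwise disjoint within a level, and set $b=k$. This gives $n=k^k$ points, $kb^{k-1}=n$ rectangles and $kn=\Theta(n\log n/\log\log n)$ incidences, and your nesting argument already shows $K_{2,2}$-freeness.
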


In~\cite{basit_etal}, Basit et al.~only studied $K_{2, 2}$-free intersection graphs $G$ between $\cB_1$ and $\cB_2$, deducing $e(G) = \Omega \bigl( n \frac{\log n}{\log\log n} \bigr)$. By taking $(t - 1)$ identical copies of their families $\cB_1$ and $\cB_2$, each of size $\frac{n}{t-1}$ instead of $n$, we obtain the desired $K_{t, t}$-free construction stated in \Cref{thm:point_rect}.

We remark that the construction in~\cite{basit_etal} is given for open boxes; however, by shrinking each box slightly, we may derive the construction in our setting where the boxes are closed. 

To establish \Cref{thm:lower}, we first extend the construction in \Cref{thm:point_rect} to arbitrary $2$-coherent $r$-direction-vectors $\vF$ in $\mathbb{R}^2$, and then lift it to higher dimensions.

\begin{proof}[Proof of \Cref{thm:lower} assuming $d = 2$]
    Since $\vF = (F_1, \dots, F_r)$ is $2$-coherent in $\R^2$, it follows from the definition that all but at most one of $F_1, \dots, F_r$ must be $\{1, 2\}$. Assume without loss of generality that $F_2 = \dots = F_r = \{1, 2\}$. We construct
    an $\vF$-family $(\cB_1, \dots, \cB_r)$ with $|\cB_j| = n$ for all $j$ such that the intersection hypergraph $\cH$ is $K^{(r)}_{t, \dots, t}$-free with $e(\cH) = \Omega \bigl( tn^{r-1} \frac{\log n}{\log\log n} \bigr)$. 

    Let $(\cB_1', \cB_2')$ be the $(\varnothing, \{1, 2\})$-family of $n$ points and $n$ boxes given by \Cref{thm:point_rect} and denote $\cH'$ as the intersection hypergraph of $(\cB_1', \cB_2')$. Set $\cB_2 \eqdef \cB_2'$. 
    \vspace{-0.5em}
    \begin{itemize}
        \item If $|F_1| = 0$, then $F_1 = \varnothing$, and we take $\cB_1 \eqdef \cB_1'$. 
        \vspace{-0.5em}
        \item If $|F_1| = 1$, then $F_1 = \{1\}$ (or $\{2\}$), and we construct $\mathcal{B}_1$ by replacing each point of $\mathcal{B}_1'$ with a tiny segment in the first (resp.~second) coordinate direction containing that point. 
        \vspace{-0.5em}
        \item If $|F_1| = 2$, then $F_1 = \{1, 2\}$, and we construct $\cB_1$ by replacing each point in $\cB_1'$ with a tiny axis-parallel square containing that point. 
    \end{itemize}
    \vspace{-0.5em}
    Here, ``tiny'' means that the geometric objects are chosen sufficiently small so that the intersection graph between $\cB_1$ and $\cB_2$ coincides with $\cH'$.
    Let $\cB_3, \cB_4, \dots, \cB_r$ be $r - 2$ collections, each consisting of $n$ identical large full boxes, such that any box in these collections contains every box in $\mathcal{B}_1 \cup \mathcal{B}_2$. Consider the intersection hypergraph $\cH$ of $(\cB_1, \dots, \cB_r)$. The hyperedges of $\cH$ are exactly
    \[
    \bigl\{ \{\bm{b}_1, \dots, \bm{b}_r\} : \{\bm{b}_1, \bm{b}_2\} \in e(\cH'), \, \bm{b}_i \in \cB_i \, (\forall i \in [r]) \bigr\}.
    \]
    Since $\cH'$ is $K_{t, t}$-free, we know that $\cH$ is $K^{(r)}_{t, \dots, t}$-free. Also, 
    \[
    e(\cH) = e(\cH') \cdot n^{r-2} = \Omega \bigl( tn^{r-1} \tfrac{\log n}{\log\log n} \bigr). 
    \]
    The proof of \Cref{thm:lower} in the case $d = 2$ is complete. 
\end{proof}

Now, we are ready to prove \Cref{thm:lower} in its full generality. 

\begin{proof}[Proof of \Cref{thm:lower}]
    Since $\vF = (F_1, \dots, F_r)$ is $2$-coherent in $\R^d$, we may assume without loss of generality that $\{1, 2\} \subseteq F_2 \cap F_3 \cap \dots \cap F_r$. For $j = 1, \dots, r$, we write $F_j' \eqdef F_j \cap \{1, 2\}$ and define $\vF' \eqdef (F_1', \dots, F_r')$. Then $\vF'$ is a $2$-coherent $r$-direction-vector in $\R^2$. 
    
    According to what we have proved in the $d = 2$ case, there exists an $\vF'$-family $(\cB'_1, \dots, \cB'_r)$ in $\R^2$ with $|\cB_j'| = n$ whose intersection hypergraph $\cH'$ is $K_{t, \dots, t}^{(r)}$-free. To obtain $(\cB_1, \dots, \cB_r)$, we first embed $(\cB'_1, \dots, \cB'_r)$ into the plane $\{x_3 = \dots = x_d = 0\} \subseteq \R^d$, and then extend each box to make it an $\vF$-family. For each $j \in [r]$ and every $\bm{b}_j \in \cB'_j$, we extend $\bm{b}_j$ along the directions $F_j \setminus \{1, 2\}$ by segments of arbitrary lengths. Then the intersection hypergraph $\cH$ behind $(\cB_1, \dots, \cB_r)$ coincides with $\cH'$. Therefore, $\cH$ is $K_{t, \dots, t}^{(r)}$-free as well with $e(\cH) = e(\cH') = \Omega \bigl( tn^{r-1} \frac{\log n}{\log\log n} \bigr)$. 
\end{proof}

\section{The upper bound---overview of ideas} \label{sec:upper_sketch}

For every $r$-size-vector $\vn = (n_1, \dots, n_r)$ of positive integers and every integer $t \ge 2$, write 
\[
g_t(\vn) = g_t(n_1, \dots, n_r) \eqdef t n_1 \cdots n_r \Bigl( \frac{1}{n_1} + \dots + \frac{1}{n_r} \Bigr). 
\]
Then $g_t\bigl( \underbrace{n, \dots, n}_r \bigr) = tr n^{r-1}$. 
We will prove a strengthening of the upper bounds in \Cref{thm:main}.

\begin{theorem} \label{thm:upper_general}
    Let $d \ge 1, \, r \ge 2, \, t \ge 2$, and $\vF$ be a non-$2$-coherent $r$-direction-vector in $\R^d$. Then there exists a constant $\Lambda_r > 0$ such that $z_d^{\vF}(\vn; t) \le \Lambda_r \cdot g_t(\vn)$ holds for every $r$-size-vector $\vn$. 
\end{theorem}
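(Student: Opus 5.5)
We prove \Cref{thm:upper_general} through a chain of reductions that ends at the canonical direction-vector $([d]\setminus\{1\},\dots,[d]\setminus\{d\})$. That case is then handled by slicing it into planar problems.

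\textbf{Step 1: normalize $\vF$.} Enlarging the direction sets can only make the problem harder. Given boxes with direction set $F_j$ and any $F_j'\supseteq F_j$, we can thicken each box by a tiny amount in the directions $F_j'\setminus F_j$ without destroying any intersection. After a generic perturbation, this also creates no new intersections, so it preserves the hypergraph. Hence $z_d^{\vF}(\vn;t)\le z_d^{\vF'}(\vn;t)$ whenever $F_j\subseteq F_j'$ for all $j$. Each non-$2$-coherent $\vF$ should therefore be enlarged to a \emph{maximal} non-$2$-coherent vector.

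Next we classify the maximal vectors. Non-$2$-coherence says that for each $k$, at most one coordinate lies in all $F_j$ with $j\ne k$. I expect that, up to discarding coordinates lying in every $F_j$ (these are irrelevant; restrict to a hyperplane through all intersection points) and merging coordinates with identical membership patterns, the maximal vectors are essentially the canonical one $F_j=[r]\setminus\{j\}$ in $\R^r$. Here coordinate $i$ is missing only from family $i$. Some degenerate variants also appear, where a coordinate is missing from several families; these are dominated by the canonical one. Merging coordinates is justified because coordinates with the same pattern can be replaced by one coordinate: use a lexicographic or generic linear projection, so that boxes collapsing in those directions stay collapsed along a single combined coordinate. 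This step needs care, since projecting full intervals in two coordinates to one can create intersections. I would instead argue directly that only one coordinate per pattern matters, choosing the pattern assignment so that each family is missing at least one coordinate it "owns".

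\textbf{Step 2: the canonical case.} Each box in $\cB_i$ is a $(d-1)$-box lying in a hyperplane $x_i=c$. An $r$-tuple intersects iff, in every coordinate $i$, the value $c_i(\bm b_i)$ lies in all the intervals of the other boxes. For $r=d=2$ this is the vertical/horizontal segment result $z_2^{(\{1\},\{2\})}(n;t)=O(tn)$ of \cite{keller_smorodinsky,chan_keller_smorodinsky}, which we take as the base case; its unbalanced version gives $O(t(n_1+n_2))$, matching $g_t$. For general $r$ I would induct on $r$. Fix $\bm b_r$, i.e.\ the slice $x_r=c$. The remaining boxes restricted to this hyperplane form an $(r-1)$-family of the same canonical type, together with an extra constraint from $\bm b_r$'s ranges. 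Summing over $\bm b_r$ naively loses a factor, so instead we slice by pairs. For each pair of families $(i,j)$, the condition on coordinates $i,j$ is a vertical--horizontal segment incidence in the $(x_i,x_j)$-plane between the projections of $\bm b_i,\bm b_j$, with the other families acting as weights.

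The plan is to charge each hyperedge to a planar incidence. Consider the $K_{t,\dots,t}$-free condition. For fixed $\bm b_3,\dots,\bm b_r$, the link graph between $\cB_1,\cB_2$ is an intersection graph of segments inside a fixed rectangle. If that link is $K_{t,t}$-free it has $O(t(n_1+n_2))$ edges, and summing gives $O(t(n_1+n_2)n_3\cdots n_r)\le O(g_t)$. If the link contains $K_{t,t}$, we take a maximal such structure and apply a dependent-random-choice/induction step to find fewer than $t$ of the other boxes per $K_{t,t}$. We then remove edges charged to the smaller families, in the style of the Chan--Keller--Smorodinsky induction on $r$ with bound $\Lambda_r$.

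\textbf{Main obstacle.} The hardest part is this last step. Forbidding $K^{(r)}_{t,\dots,t}$ does not make the links $K_{t,t}$-free, so we must exploit geometry. Sets of $\bm b_3$'s sharing a common $K_{t,t}$ in their link have intervals with a common point, which gives Helly-type structure. I would first try to show that the set of $(r-2)$-tuples whose link contains a fixed $K_{t,t}$ forms a "box-product" pattern. Then, by induction on $r$ with $t$ fixed, the number of such heavy configurations is at most $\Lambda_{r-1}g_t$ of the smaller vector, and the constants close to give $\Lambda_r$.
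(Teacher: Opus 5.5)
\textbf{The canonical case is left unproved.} Your Step~2 carries the whole argument, and its decisive step is only a hope. For fixed $\bm b_3,\dots,\bm b_r$ the link of $(\cB_1,\cB_2)$ is indeed a planar horizontal/vertical segment graph. But forbidding $K^{(r)}_{t,\dots,t}$ does not make these links $K_{t,t}$-free: for $r=3$, each $\bm b_3$ may host its own $K_{t,t}$ on different boxes of $\cB_1,\cB_2$ without creating a $K^{(3)}_{t,t,t}$. Invoking ``dependent random choice'' or a conjectured ``box-product pattern'' does not close this.

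A naive induction on $r$ in the style of the one-dimensional Chan--Keller--Smorodinsky proof fails as well. Merging $\cB_1,\dots,\cB_{d-1}$ into their $(d-1)$-fold intersections yields $x_d$-parallel segments versus $(d-1)$-boxes orthogonal to $x_d$. This pair is $2$-coherent for $d\ge 3$, so the bipartite $O(tm+sn)$ bound such an induction would invoke is false by \Cref{thm:lower}.

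What is missing is a slicing with two properties. Every tuple of the merged families should lie in exactly one plane. Within a plane, distinct segments should come from tuples using pairwise distinct boxes of every family. Then a planar $K_{t,t}$ lifts to $K^{(d)}_{t,\dots,t}$ via \Cref{obs:box_helly}. The paper obtains this in three moves:
\begin{enumerate}
\item It perturbs the family so that the boxes of each $\cB_i$ lie in distinct hyperplanes (\Cref{prop:upper_canonical}).
\item It applies the biclique covers of \Cref{lem:biclique_cover}, inducting on the number of incompletely intersecting pairs $(\cB_i,\cB_j)$. Bicliques delete an edge of this auxiliary graph, while partial bicliques shrink the two sizes involved by a factor $b$. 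Two disjoint edges or a triangle make the recursion close. The induction ends when $\cB_1,\dots,\cB_{d-1}$ pairwise intersect completely (\Cref{prop:upper_separated}).
\item It rescales so the $(d-1)$-fold intersections form the grid $[n_1]\times\dots\times[n_{d-1}]$. It then slices by the $2$-planes spanned by $\vec e_1+\dots+\vec e_{d-1}$ and $\vec e_d$ through the lower-boundary grid points. There are at most $g_t(n_1,\dots,n_{d-1})/t$ such planes, and each carries a genuinely $K_{t,t}$-free segment problem. Summing \Cref{lem:COZ_CKS} gives $27g_t(\vn)$ (\Cref{prop:upper_restricted}).
\end{enumerate}

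\textbf{Step~1 is also wrong as stated.} The monotonicity $z_d^{\vF}\le z_d^{\vF'}$ under enlargement is fine. However, coordinates lying in every $F_j$ cannot be discarded: every box extends in such a direction, so no hyperplane orthogonal to it contains all intersection points. The basic example is $(\{1\},\{1\})$ in $\R^1$, i.e.\ two families of intervals, which is maximal and non-$2$-coherent. The paper treats it by a separate one-dimensional argument (\Cref{lem:upper_onedim}).

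Nor are the ``degenerate'' patterns dominated by the canonical one via enlargement. For instance, $(\{1\},\{1\})$ in $\R^2$ is maximal non-$2$-coherent, and every proper enlargement of it is $2$-coherent, hence carries the extra $\frac{\log n}{\log\log n}$ factor. The paper handles such coordinates by induction on $d$ instead. If $i\notin F_{j_1}\cup F_{j_2}$, the $\cB_{j_1}$- and $\cB_{j_2}$-boxes of any hyperedge lie in a common hyperplane $\{x_i=s\}$. Each slice is then a non-$2$-coherent problem in $\R^{d-1}$. The slice bounds sum to $\Lambda_r g_t(\vn)$ because $\sum_s n_{j_1}^{(s)}=n_{j_1}$, $\sum_s n_{j_2}^{(s)}=n_{j_2}$ and $\sum_s n_{j_1}^{(s)}n_{j_2}^{(s)}\le n_{j_1}n_{j_2}$.

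Only when every coordinate is missing from at most one family does non-$2$-coherence force the canonical shape: each family misses at most one coordinate, and the missing coordinates cover $[d]$. That configuration is then embedded into $\R^r$, so no merging of coordinates is ever needed.
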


As noted in \Cref{sec:intro}, the theorem is tight up to a constant factor in the $n_1 = \dots = n_r$ balanced case. In general, \Cref{thm:upper_general} is again tight up to a constant factor, as shown by the following construction. 
Partition $\cB_1 \cup \cdots \cup \cB_r$ into $r$ disjoint groups so that, for each $j \in [r]$, the $j$-th group contains $\frac{n_j}{r}$ boxes from $\cB_j$. In the $j$-th group, arrange $n_j - t$ boxes from $\cB_j$ to be far away so that they are disjoint with others, while every other box from $\cB_1 \cup \dots \cup \cB_r$ in the same group contains a fixed point. Summing over all groups yields $\sum_{j=1}^ r\frac{n_1}{r} \cdots \frac{n_{j-1}}{r} \cdot t \cdot \frac{n_{j+1}}{r} \cdots \frac{n_r}{r} = \frac{g_t(\vn)}{r^{r-1}}$ hyperedges.

Our proof proceeds through several reduction steps, ultimately reducing the problem to a planar incidence bound.
The following theorem will be an important ingredient of our proof.

\begin{theorem}[{\cite[Theorem 1.5]{fox_pach}}] \label{thm:CKS}
    In the Euclidean plane $\R^2$, let $A$ be a multiset of $m$ horizontal segments and $B$ be a multiset of $n$ vertical segments. If the bipartite intersection graph $G$ between $A$ and $B$ is $K_{t, t}$-free with $t \ge 2$, then the maximum number of edges in $G$ is $\Theta(tm + tn)$. 
\end{theorem}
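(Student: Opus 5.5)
The lower bound $\Omega(tm+tn)$ is the trivial construction from \Cref{sec:intro}: $t-1$ segments of one family together with all segments of the other pass through a common point, and the remaining segments are placed far away. So the content is the upper bound $e(G)=O(tm+tn)$. I would prove it by showing that $G$ is $O(t)$-\emph{degenerate}: every nonempty $K_{t,t}$-free configuration contains a segment of degree at most $Ct$ for an absolute constant $C$. Given this, delete that segment and induct on $m+n$; each deletion loses at most $Ct$ edges, so $e(G)\le Ct(m+n)$. The degeneracy statement is hereditary, which is what makes the induction legitimate.

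First I would make two harmless normalizations. By a small perturbation I may assume all endpoint coordinates are distinct, after checking that the perturbation does not create a $K_{t,t}$; for multisets this means replacing identical copies by slightly shifted nearly-identical ones that cross the same segments. I may also discard isolated segments. Next I would record the basic structural fact. Take any vertical $v$, and let $h_1,\dots,h_t$ be horizontals crossing $v$ whose $y$-coordinates are consecutive among those crossing $v$. Then the verticals crossing all of $h_1,\dots,h_t$ are exactly those verticals $v'$ with $x(v')\in\bigcap_i \pi_x(h_i)$ and $\pi_y(v')\supseteq[y(h_1),y(h_t)]$. By $K_{t,t}$-freeness there are fewer than $t$ of them, and the symmetric statement holds with the roles of horizontals and verticals swapped.

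For the degeneracy step, suppose for contradiction that every segment has degree at least $Ct$. Let $v_0$ be the vertical with the smallest $x$-coordinate. Every horizontal $h$ meeting $v_0$ has all its neighbours at $x\ge x(v_0)$, hence has at least $Ct-1$ neighbours to the right of $v_0$. Among the at least $Ct$ horizontals meeting $v_0$, sorted by $y$, take the middle block $h_1,\dots,h_t$ of consecutive ones. Since each crosses many verticals to the right, their right endpoints lie beyond many verticals. Then consider the common $x$-window $W=[x(v_0),\min_i \mathrm{right}(h_i)]$ and the verticals with $x$-coordinate in $W$. Each of these either spans the whole $y$-range $[y(h_1),y(h_t)]$, and there are fewer than $t$ such, or has an endpoint strictly inside that range. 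I would use the remaining $\Omega(Ct)$ degree of the horizontals (each $h_i$ has at least $Ct-1$ neighbours in $x\ge x(v_0)$) to force many verticals in $W$ with an endpoint inside the strip between $h_1$ and $h_t$. Iterating this over the $C/2$ disjoint blocks of $t$ consecutive horizontals on $v_0$, and then applying the symmetric argument to a leftmost vertical inside the window, should produce either $t$ verticals fully spanning some block, which is a $K_{t,t}$, or a strictly smaller sub-configuration with the same minimum-degree hypothesis. The latter gives an infinite descent, a contradiction.

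I expect the main obstacle to be exactly this last step: turning ``many crossings near an extremal segment'' into $t$ segments that pairwise cross a common set of $t$ segments, with only $O(t)$ loss rather than a $\log$ loss. If the extremal-segment descent proves too delicate, my fallback is a charging scheme. For an edge $(h,v)$, call it \emph{boundary} if $h$ is among the $t$ lowest or $t$ highest horizontals on $v$, or $v$ is among the $t$ leftmost or $t$ rightmost verticals on $h$. There are at most $4t(m+n)$ boundary edges. For each remaining edge $(h,v)$, I would show it can be injectively charged (up to a factor $O(t)$) to an endpoint of some segment lying inside the grid cell that the $t$ neighbours above/below and left/right cut out around the crossing. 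The argument is that otherwise those neighbours fully span one another and form a $K_{t,t}$. Since there are only $2(m+n)$ endpoints, this would give the linear bound.
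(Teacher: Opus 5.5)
Your lower bound is fine (it implicitly needs $\min\{m,n\}\ge t-1$), and your structural observations are correct. The upper bound, however, is the whole content of the statement, and neither of your routes establishes it: each stops exactly where the difficulty lies. The paper gives no proof of its own here. It imports the theorem from Fox--Pach and uses the explicit form $e(G)\le 27t(m+n)$ from the Chan--Keller--Smorodinsky charging argument.

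\textbf{Degeneracy route.} Since the class of $K_{t,t}$-free configurations is hereditary, $O(t)$-degeneracy is equivalent to the edge bound. So the reformulation gains nothing, and everything rests on your descent. The window step yields only that the $h_i$ with the leftmost right endpoint has at least $(C-1)t$ neighbours in $W$ with an endpoint inside the strip $[y(h_1),y(h_t)]$. That is no contradiction, because $K_{t,t}$-freeness places no local limit on the number of endpoints in a strip. The promised ``strictly smaller sub-configuration with the same minimum-degree hypothesis'' has no mechanism behind it. Degrees are measured in the whole configuration, and restricting to the segments near $v_0$ or meeting $W$ deletes neighbours of the surviving segments. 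So the hypothesis that all degrees are at least $Ct$ is not inherited, and the descent never starts. Turning ``many endpoints near many crossings'' into a contradiction needs a global count in which each endpoint is used only $O(t)$ times, which is exactly the charging problem.

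\textbf{Charging fallback.} Your $K_{t,t}$ argument here is correct but gives only \emph{existence}: the cell of an interior edge $(h,v)$, bounded by the $t$-th neighbours, contains an endpoint of one of the $O(t)$ segments defining it. The theorem is a \emph{multiplicity} statement. You specify neither a rule for choosing the endpoint nor a reason why a fixed endpoint is chosen only $O(t)$ times.

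Cells can be huge and overlap heavily. Take $h_k=[-B,k+\tfrac12]\times\{k\}$ and $v_k=\{k\}\times[-B,k+\tfrac12]$ for $k=1,\dots,K$, so $h_k$ meets $v_j$ iff $j=k$. Attach to each $h_k$ and each $v_k$ exactly $t$ tiny private crossing segments on each side, with the left/lower ones at negative coordinates. The graph is $K_{2,2}$-free and every edge $h_kv_k$ is interior. Each such cell contains $[0,k]^2$, so the right endpoint of $h_1$ lies in $K-1$ cells. Here a good choice exists (the private endpoints). In general, though, a segment $s$ helps define the cells of up to $O(t\deg(s))$ interior edges. Showing that only $O(t)$ of them must be charged to an endpoint of $s$ is the real difficulty: linear dependence on $t$ took several papers, after the bounds $O(t^6m+tn)$ and $O(t^2m+tn)$.

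The missing ingredient is a global device with controlled multiplicity. Examples are the Chan--Keller--Smorodinsky charging scheme, the Fox--Pach separator argument, or the Hunter--Milojevi\'c--Sudakov--Tomon method.

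A minor point: your perturbation must preserve every edge, not merely avoid creating a $K_{t,t}$. First enlarging each segment by a distinct tiny amount, as the paper does in its own perturbation steps, handles this.
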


As far as we know, \Cref{thm:CKS} was first implicitly discovered by Fox and Pach~\cite{fox_pach}, where it was formulated for string graphs under the assumption that any two strings intersect in only a bounded number of points, and presented as a Turán-type result. The Zarankiewicz-type problem for axis-parallel segments was studied explicitly in~\cite{keller_smorodinsky}, where Keller--Smorodinsky established an $O(t^6 m + tn)$ upper bound and conjectured the sharp bound $\Theta(tm + tn)$. This upper bound was subsequently improved by Chan--Keller--Smorodinsky in~\cite{chan_keller_smorodinsky} to $O(t^2 m + tn)$. A complete resolution, stated explicitly as \Cref{thm:CKS}, was later obtained in~\cite{chan_keller_smorodinsky_arxiv}, an updated version of~\cite{chan_keller_smorodinsky}. 

We remark that the Fox--Pach~\cite{fox_pach} separator argument is very different from the later charging argument developed in~\cite{chan_keller_smorodinsky_arxiv} (inspired by Chalermsook--Orgo--Zarsav~\cite{chalermsook_orgo_zarsav}).
Very recently, \Cref{thm:CKS} was reproved via a newly-developed technique by Hunter--Milojević--Sudakov--Tomon~\cite{hunter_milojevic_sudakov_tomon}.

To prove \Cref{thm:upper_general}, we proceed through four reduction steps, ultimately reducing the problem to \Cref{thm:CKS} (see \hyperlink{figone}{Figure~1}). In this section, we introduce the necessary terminology and outline the main ideas, deferring the formal statements and proofs to \Cref{sec:upper_proof}.

\centerline{
\begin{tikzpicture}
    \clip (-0.5, -3) rectangle (16.4, 2.3);
    \draw (0, -0.8) rectangle (2.7, 0.8);
    \node at (1.35, 0.333) {\scriptsize the general case};
    \node at (1.35, -0.333) {\scriptsize (\Cref{thm:upper_general})};
    \draw (3.3, -0.8) rectangle (6, 0.8);
    \node at (4.65, 0.333) {\scriptsize the canonical case};
    \node at (4.65, -0.333) {\scriptsize (\Cref{prop:upper_canonical})};
    \draw (6.6, -0.8) rectangle (9.3, 0.8);
    \node at (7.95, 0.333) {\scriptsize the separated case};
    \node at (7.95, -0.333) {\scriptsize (\Cref{prop:upper_separated})};
    \draw (9.9, -0.8) rectangle (12.6, 0.8);
    \node at (11.25, 0.333) {\scriptsize the restricted case};
    \node at (11.25, -0.333) {\scriptsize (\Cref{prop:upper_restricted})};
    \draw (13.2, -0.8) rectangle (15.9, 0.8);
    \node at (14.55, 0.333) {\scriptsize known result};
    \node at (14.55, -0.333) {\scriptsize (\Cref{thm:CKS})};
    \draw[-Stealth][dash pattern = on 3pt off 3pt] (1.8, 0.8) arc (135:45:1.697);
    \node at (3, 1.6) {\scriptsize Reduction {\MakeUppercase{\romannumeral 1}}};
    \draw[-Stealth][dash pattern = on 3pt off 3pt] (5.1, 0.8) arc (135:45:1.697);
    \node at (6.3, 1.6) {\scriptsize Reduction {\MakeUppercase{\romannumeral 2}}};
    \draw[-Stealth][dash pattern = on 3pt off 3pt] (8.4, 0.8) arc (135:45:1.697);
    \node at (9.6, 1.6) {\scriptsize Reduction {\MakeUppercase{\romannumeral 3}}};
    \draw[-Stealth][dash pattern = on 3pt off 3pt] (11.7, 0.8) arc (135:45:1.697);
    \node at (12.9, 1.6) {\scriptsize Reduction {\MakeUppercase{\romannumeral 4}}};
    \draw[-Stealth] (14.1, -0.8) arc (-45:-135:1.697);
    \node at (12.9, -1.6) {\scriptsize \Cref{sec:upper_restricted}};
    \draw[-Stealth] (10.8, -0.8) arc (-45:-135:1.697);
    \node at (9.6, -1.6) {\scriptsize \Cref{sec:upper_separated}};
    \draw[-Stealth] (7.5, -0.8) arc (-45:-135:1.697);
    \node at (6.3, -1.6) {\scriptsize \Cref{sec:upper_canonical}};
    \draw[-Stealth] (4.2, -0.8) arc (-45:-135:1.697);
    \node at (3, -1.6) {\scriptsize \Cref{sec:upper_general}};
    \node at (7.95, -2.5) {\textbf{\hypertarget{figone}{Figure 1:}} The proof outline of \Cref{thm:upper_general}. };
\end{tikzpicture}
}

\paragraph{Reduction \texorpdfstring{{\MakeUppercase{\romannumeral 1}}}{I}.} In $\R^d$, define the \emph{canonical} direction-vector as
\[
\vF_d \eqdef \bigl( [d] \setminus \{1\}, [d] \setminus \{2\}, \dots, [d] \setminus \{d\} \bigr). 
\]
For instance, $\vF_3 = \bigl\{ \{2, 3\}, \{3, 1\}, \{1, 2\} \bigr\}$, and an $\vF_3$-family consists of collections $\cB_1, \cB_2, \cB_3$, where
\vspace{-0.5em}
\begin{itemize}
    \item $\cB_1$ consists of rectangles perpendicular to the $x$-axis, 
    \vspace{-0.5em}
    \item $\cB_2$ consists of rectangles perpendicular to the $y$-axis, 
    \vspace{-0.5em}
    \item $\cB_3$ consists of rectangles perpendicular to the $z$-axis. 
\end{itemize}
\vspace{-0.5em}

For simplicity, we set $z_d(\vn; t) \eqdef z_d^{\vF_d}(\vn; t)$, and we call an $\vF_d$-family a \emph{canonical} family. 
To obtain upper bounds on $z_d^{\vF}(\vn; t)$ in general, the canonical case $\vF = \vF_d$ turns out to be crucial. We will reduce the general case to bounding $z_d(\vn; t)$ for canonical families in \Cref{sec:upper_general}.

\paragraph{Reduction \texorpdfstring{{\MakeUppercase{\romannumeral 2}}}{II}.} 
For a canonical family $(\cB_1, \dots, \cB_d)$, each box $\bm{b}\in \cB_i$ lies in a hyperplane perpendicular to the $i$-th coordinate. Denote this hyperplane by $\Gamma_{\bm{b}}$. We say that a family $(\cB_1, \dots, \cB_d)$ is \emph{separated} if it is canonical, and the hyperplanes $\Gamma_{\bm{b}}$ with $\bm{b} \in \cB_i$ are all distinct for each $i \in [d]$. 

For every $\vn = (n_1, \dots, n_d)$, we denote by $z_d^*(\vn; t)$ the maximum of $e(\cH)$, given that $\cH$ is the intersection hypergraph of a separated family $(\cB_1, \dots, \cB_d)$ with $|\cB_j| = n_j$ for $j = 1, \dots, d$, and $\cH$ forbids the subgraph $K_{t, \dots, t}^{(d)}$. 
We will establish $z_d(\vn; t) = z_d^*(\vn; t)$ in \Cref{sec:upper_canonical}.

\paragraph{Reduction \texorpdfstring{{\MakeUppercase{\romannumeral 3}}}{III}.} We say that a family $(\cB_1, \dots, \cB_d)$ is \emph{restricted} if it is separated, and
\[
\text{$\bigcap_{i=1}^{d-1} \bm{b}_i \ne \varnothing$ holds for every $(\bm{b}_1, \dots, \bm{b}_{d-1}) \in \cB_1 \times \dots \times \cB_{d-1}$}. 
\]

For every $\vn = (n_1, \dots, n_d)$, we denote by $\wz_d^*(\vn; t)$ the maximum of $e(\cH)$, given that $\cH$ is the intersection hypergraph of a restricted family $(\cB_1, \dots, \cB_d)$, where $|\cB_j| = n_j \, (j = 1, \dots, d)$ and $\cH$ forbids $K_{t, \dots, t}^{(d)}$. 
In \Cref{sec:upper_separated}, we will establish $z^*_d(\vn; t) = O_d \bigl( g_t(\vn) \bigr)$, given that $\wz_d^*(\vn; t) = O_d \bigl( g_t(\vn) \bigr)$. 

\paragraph{Reduction \texorpdfstring{{\MakeUppercase{\romannumeral 4}}}{IV}.} 
It follows from the definitions that any separated $\vF_2$-family is restricted. In fact, when $d = 2$, the restriction on the first $d - 1 = 1$ families holds vacuously. So, \Cref{thm:CKS} implies that $\wz_2^*(m, n; t) = z_2^*(m, n; t) = O(tm + tn)$, which then implies that $\wz_2^*(\vn; t) = O \bigl( g_t(\vn) \bigr)$. 

Using a novel geometric argument, we are going to establish $\wz_d^*(\vn; t) = O_d \bigl( g_t(\vn) \bigr)$ in \Cref{sec:upper_restricted} by reducing it to the case $d = 2$.

\section{The upper bound---proof in detail} \label{sec:upper_proof}

Following the strategy and reductive steps introduced in \Cref{sec:upper_sketch}, we establish the upper bound in \Cref{thm:main} in this section. To be specific, our goal is to deduce \Cref{thm:upper_general} in four steps. 

\subsection{The restricted case} \label{sec:upper_restricted}

Firstly, we prove \Cref{thm:upper_general} in the restricted case. 

\begin{proposition} \label{prop:upper_restricted}
    Suppose $d \ge 2$ and $t \ge 2$. Then $\wz_d^*(\vn; t) \le 27 g_t(\vn)$ holds for every $d$-size-vector $\vn$.
\end{proposition}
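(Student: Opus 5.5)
The plan is to exploit the restriction to turn each hyperedge into a planar incidence between a vertical and a horizontal segment, and then invoke \Cref{thm:CKS}. For a separated family write $\Gamma_{\bm{b}_i}=\{x_i=c(\bm{b}_i)\}$ and let $J(\bm{b}_i)$ be the $x_d$-interval of $\bm{b}_i$ for $i<d$. Since each $\bm{b}_i$ with $i<d$ has direction set $[d]\setminus\{i\}$, the intersection $\bigcap_{i<d}\bm{b}_i$ is exactly a segment parallel to the $x_d$-axis. Restrictedness says it is nonempty, so it equals
\[
\{(c(\bm{b}_1),\dots,c(\bm{b}_{d-1}))\}\times\bigcap_{i<d}J(\bm{b}_i).
\]
A box $\bm{b}_d\in\cB_d$ is a $(d-1)$-box $Q(\bm{b}_d)\times\{c(\bm{b}_d)\}$. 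Hence $\{\bm{b}_1,\dots,\bm{b}_d\}$ is an edge iff two things hold: the grid point $(c(\bm{b}_1),\dots,c(\bm{b}_{d-1}))$ lies in $Q(\bm{b}_d)$, and $c(\bm{b}_d)\in\bigcap_{i<d}J(\bm{b}_i)$.

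\textbf{Slicing.} Fix a $(d-2)$-tuple $\tau=(\bm{b}_2,\dots,\bm{b}_{d-1})$ and work in the plane $\Pi_\tau=\{x_i=c(\bm{b}_i),\,2\le i\le d-1\}$ with coordinates $(x_1,x_d)$. Each $\bm{b}_1$ meets $\Pi_\tau$ in a vertical segment $\{c(\bm{b}_1)\}\times\bigl(J(\bm{b}_1)\cap J_\tau\bigr)$, where $J_\tau=\bigcap_{2\le i\le d-1}J(\bm{b}_i)$. Each $\bm{b}_d$ whose $Q$ contains the point $(c(\bm{b}_2),\dots,c(\bm{b}_{d-1}))$ in the middle coordinates meets $\Pi_\tau$ in a horizontal segment at height $c(\bm{b}_d)$. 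Edges of $\cH$ containing $\tau$ are then exactly the crossings in $\Pi_\tau$. Because the family is separated, distinct $\bm{b}_1$ give distinct vertical lines and distinct $\bm{b}_d$ give distinct heights.

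\textbf{Summing over slices.} If every slice graph $G_\tau$ were $K_{t,t}$-free, \Cref{thm:CKS} would give $e(G_\tau)=O(t(n_1+n_d))$. Summing over the $n_2\cdots n_{d-1}$ choices of $\tau$ would yield $O(g_t(\vn))$ after symmetrizing over which pair of families plays the planar role. The constant $27$ would come from tracking the constants in \Cref{thm:CKS} and in the cleaning steps below.

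\textbf{Main obstacle.} A $K_{t,t}$ in a single slice does not by itself give a $K^{(d)}_{t,\dots,t}$ in $\cH$, so the slices need not be $K_{t,t}$-free. My plan is a cleaning/charging step. Call $\tau$ \emph{bad} if $G_\tau$ contains a $K_{t,t}$ on $S\subseteq\cB_1$, $T\subseteq\cB_d$. Such a $K_{t,t}$ forces, for every $\bm{b}_1\in S$ and $\bm{b}_d\in T$, the height $c(\bm{b}_d)$ to lie in $J_\tau$ and in $J(\bm{b}_1)$. It also forces $Q(\bm{b}_d)$ to contain the middle point of $\tau$. Using the grid structure, I will show that the set of tuples $\tau$ for which the same $(S,T)$ survives has a product form $\cX_2\times\cdots\times\cX_{d-1}$. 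Membership is decided coordinatewise by $c(\bm{b}_i)\in\bigcap_{\bm{b}_d\in T}\pi_i Q(\bm{b}_d)$ together with $J(\bm{b}_i)\supseteq\operatorname{conv}\{c(\bm{b}_d):\bm{b}_d\in T\}$. Each $\cX_i$ must therefore have fewer than $t$ elements, or else $\cH$ contains $K^{(d)}_{t,\dots,t}$.

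With this, I would run the standard ``remove a $K_{t,t}$ and charge'' argument inside each slice. Repeatedly delete an edge of a $K_{t,t}$ and charge it to a pair $(\bm{b}_d,\text{coordinate})$. The product structure bounds the charges by $t$ per box per family, which gives the extra $t\cdot\prod_{j\ne i}n_j$ terms of $g_t$. I expect making this charging rigorous, while keeping the linear dependence on $t$ and a dimension-free constant, to be the hardest step. If it fails, the fallback is induction on $d$: merge $\cB_1,\cB_2$ into a single family of their intersections, which is again a restricted-type family in one dimension lower, and pay a factor $3$ per merge.
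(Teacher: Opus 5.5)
There is a genuine gap, and it is the step you flag yourself. Slicing along the axis-parallel planes $\Pi_\tau$ fixes a single box from each of $\cB_2,\dots,\cB_{d-1}$. So a $K_{t,t}$ in $G_\tau$ only gives a $K^{(d)}_{t,1,\dots,1,t}$ in $\cH$, and \Cref{lem:COZ_CKS} cannot be applied slice by slice.

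The deficit is real. Already for $d=3$ one can have $t-1$ slices that are complete bipartite graphs $K_{n_1,n_3}$ while $\cH$ stays $K^{(3)}_{t,t,t}$-free, so about $(t-1)n_1n_3$ edges must be paid for outside the planar bound. Your cleaning step does not supply this. The product-structure observation is fine, but it only shows that \emph{some} $\cX_i$ has fewer than $t$ elements, not each, and which one depends on $(S,T)$. The charging rule is not specified precisely enough to check that it absorbs the terms $t\prod_{j\ne i}n_j$, and that accounting is the whole content of the proof.

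The fallback also fails, for three reasons:
\begin{itemize}
\item A $K^{(d-1)}_{t,\dots,t}$ among merged boxes $\bm{b}_1\cap\bm{b}_2$ need not use $t$ distinct boxes of $\cB_1$ or of $\cB_2$, so the merged hypergraph is not $K_{t,\dots,t}$-free.
\item The merged boxes are fixed in two coordinates. Together with $\cB_3,\dots,\cB_d$ they have direction vector $([d]\setminus\{1,2\},[d]\setminus\{3\},\dots,[d]\setminus\{d\})$, which is $2$-coherent rather than a restricted family in $\R^{d-1}$.
\item A factor $3$ per merge gives a $d$-dependent constant instead of $27$.
\end{itemize}

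The missing idea is to choose slices in which the $(d-1)$-fold intersections come from tuples that differ in \emph{every} coordinate; then no cleaning is needed. This is what the paper does.
\begin{itemize}
\item Using separatedness, rescale the first $d-1$ axes piecewise linearly so that the boxes of $\cB_i$ lie in $\{x_i=u\}$, $u\in[n_i]$. Restrictedness then places a segment parallel to $\vec{e}_d$ over every point of the grid $[n_1]\times\cdots\times[n_{d-1}]$.
\item Slice by the $2$-planes $\bm{\alpha}_p$ spanned by $\vec{e}_1+\cdots+\vec{e}_{d-1}$ and $\vec{e}_d$. Take them through the points $p$ of the lower boundary $P$ of the grid, i.e., the grid points with some coordinate equal to $1$.
\item The grid points on $\bm{\alpha}_p$ are $p+s(\vec{e}_1+\cdots+\vec{e}_{d-1})$. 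Hence distinct segments of $\cT_p$ (the $(d-1)$-fold intersections on $\bm{\alpha}_p$) use distinct boxes from each of $\cB_1,\dots,\cB_{d-1}$. Each box of $\cB_d$ meets $\bm{\alpha}_p$ in a segment of $\cS_p$ perpendicular to them.
\item A $K_{t,t}$ in the slice therefore gives $t$ distinct, pairwise intersecting boxes from each family, hence a $K^{(d)}_{t,\dots,t}$ by \Cref{obs:box_helly}. So each slice graph $G_p$ is $K_{t,t}$-free, and \Cref{lem:COZ_CKS} gives $e(G_p)\le 27t(|\cS_p|+|\cT_p|)$.
\item Summing with $|\cS_p|\le n_d$, $|P|\le g_t(n_1,\dots,n_{d-1})/t$ and $\sum_p|\cT_p|=n_1\cdots n_{d-1}$ yields exactly $27g_t(\vn)$.
\end{itemize}
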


To prove \Cref{prop:upper_restricted}, we use the following folklore fact that axis-parallel boxes have Helly number $2$.
This fact will be useful in \Cref{append:upper_onedim} as well. 

\begin{observation} \label{obs:box_helly}
    Suppose $d \ge 1, \, r \ge 1$ and let $\bm{b}_1, \dots, \bm{b}_r$ be boxes in $\R^d$ such that $\bm{b}_i \cap \bm{b}_j \ne \varnothing$ for every pair of distinct indices $i, j$.
    Then $\bm{b}_1 \cap \dots \cap \bm{b}_r \ne \varnothing$. 
\end{observation}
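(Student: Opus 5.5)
The plan is to reduce to the one-dimensional case and then handle one dimension by a pairwise argument. A box $\bm{b} = [a_1,b_1]\times\dots\times[a_d,b_d]$ is the product of its coordinate projections $\pi_k(\bm{b}) = [a_k,b_k]$. Hence $\bigcap_{i=1}^r \bm{b}_i \ne \varnothing$ if and only if, for every $k\in[d]$, the intervals $\pi_k(\bm{b}_1),\dots,\pi_k(\bm{b}_r)$ share a common point. Likewise, $\bm{b}_i\cap\bm{b}_j\ne\varnothing$ if and only if $\pi_k(\bm{b}_i)\cap\pi_k(\bm{b}_j)\ne\varnothing$ for every $k$. So the statement follows once we prove it for intervals in $\R^1$ (degenerate intervals $a_i=b_i$ included). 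We apply that version coordinate by coordinate and take the product of the common points $x_k$ to get a point $(x_1,\dots,x_d)$ lying in every $\bm{b}_i$.

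For closed intervals $[a_i,b_i]$, $i\in[r]$, that pairwise intersect, set $x \eqdef \max_i a_i$ and say it is attained at $i_0$. For any $j$, the intervals $[a_{i_0},b_{i_0}]$ and $[a_j,b_j]$ meet, so $a_{i_0}\le b_j$, i.e., $x\le b_j$. Also $a_j\le x$ by maximality. Hence $x\in[a_j,b_j]$ for every $j$, and the intervals have a common point.

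The case $r=1$ is trivial, since a box is nonempty ($a_i\le b_i$). There is no real obstacle here. The only point needing care is that closedness (and the allowed degeneracy $a_i=b_i$) is what makes the maximum left endpoint a legitimate common point. Because of this, the argument is stated for closed intervals, matching the paper's definition of boxes.
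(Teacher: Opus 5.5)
Your proof is correct and follows the same route as the paper. Both project onto each coordinate axis, use one-dimensional Helly to find a common point in each coordinate, and take the Cartesian product of these points. The only difference is that you prove the one-dimensional step explicitly via the maximal left endpoint, whereas the paper simply cites Helly's theorem in dimension one.
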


\begin{proof}
    When $d = 1$, this is exactly Helly's theorem in one dimension. For a larger $d$, the projection intervals of $\bm{b}_1, \dots, \bm{b}_r$ to every coordinate axis have pairwise intersection, and so there exists a point piercing all such intervals. So, the Cartesian product of these points pierces each of $\bm{b}_1, \dots, \bm{b}_r$. 
\end{proof}

We will use a more precise version of \Cref{thm:CKS} due to Chan--Keller--Smorodinsky~\cite{chan_keller_smorodinsky_arxiv}. 

\begin{lemma}[{\cite[Proposition 2.4]{chan_keller_smorodinsky_arxiv}}] \label{lem:COZ_CKS}
    In $\R^2$, let $\cA$ and $\cB$ be multisets of $m$ horizontal segments and $n$ vertical segments, respectively. If their intersection graph $G$ is $K_{t, t}$-free, then $e(G) \le 27t(m + n)$. 
\end{lemma}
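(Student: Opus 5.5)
This is Chan--Keller--Smorodinsky's Proposition~2.4, quoted from \cite{chan_keller_smorodinsky_arxiv}. If we wanted a self-contained argument, I would aim for a charging proof in the spirit of Chalermsook--Orgo--Zarsav, combined with Euler's formula. The two regimes of the target bound $27t(m+n)$ are clear. The term $tm$ should pay for edges that are \emph{extreme} on their horizontal segment. The term $tn$ should pay for edges that are extreme on their vertical segment. The remaining \emph{deep} edges should be controlled by planarity together with $K_{t,t}$-freeness.

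\emph{Step 1 (normalization).} By a small perturbation I would assume general position. All endpoints and coordinates are distinct, and identical copies in the multisets are separated by tiny shifts, so the intersection graph does not change. Now fix a horizontal segment $h$ and list the vertical segments crossing it from left to right. Call an edge $(h,v)$ \emph{$h$-extreme} if $v$ is among the first $ct$ or the last $ct$ crossings on $h$, for a constant $c$ to be tuned. Define \emph{$v$-extreme} symmetrically, using the bottom-to-top order of crossings on $v$. There are at most $2ct(m+n)$ extreme edges. It then suffices to show that the deep edges, which are extreme on neither side, number $O(t(m+n))$.

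\emph{Step 2 (the planar structure).} For a deep edge $(h,v)$, the crossing point has at least $ct$ crossings on each of its four arms. I would group the crossings along each vertical $v$ into consecutive blocks of $t$. Joining the first and last horizontal segment of each block gives a multigraph $P$ on the horizontal segments, drawn along $v$ and hence planar. Together, $P$ and the symmetric construction on verticals account for every deep edge up to a factor $t$. It then suffices to show that $P$ has $O(m+n)$ edges after discarding parallel copies that are ``homotopic'', meaning they bound an empty lens. Euler's formula for planar multigraphs with no homotopic parallel edges gives $O(m)$ edges. It remains to bound the number of parallel copies joining the same pair $(h,h')$ along different verticals.

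\emph{Step 3 (main obstacle: killing parallel edges with $K_{t,t}$-freeness).} This is the hard step. Suppose $h$ and $h'$ are joined by block-edges along $t$ verticals $v_1,\dots,v_t$ that are consecutive in $x$-order, so the edges are homotopic. Each $v_j$ spans the whole vertical gap between $h$ and $h'$. The $t$ horizontals inside each block must also cross every $v_j$ in the run; otherwise some such horizontal would end between two of the $v_j$'s, and its endpoint would separate the lens. That forces a $K_{t,t}$, a contradiction. So each homotopy class carries fewer than $t$ parallel edges, and the total becomes $O(t(m+n))$. Making this ``endpoint inside the lens'' argument rigorous, and charging each separating endpoint to a single segment without double counting, is where I expect the real work. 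Optimizing $c$ and the Euler constants should then yield the constant $27$. If it does not, a constant of $O(1)$ is all the paper needs downstream, and the paper's route of citing \cite{chan_keller_smorodinsky_arxiv} is the fallback.
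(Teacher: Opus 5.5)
The paper does not prove this lemma; it imports it from Chan--Keller--Smorodinsky, whose argument is a charging scheme in the spirit of Chalermsook--Orgo--Zarsav. Your fallback of citing is therefore exactly the paper's route. Your self-contained sketch, however, does not establish the statement, and the decisive problem is quantitative. Grant Steps~2 and~3 as written. Every edge of $P$ stands for a block of $t$ crossings, so there are at most $t\,|E(P)|$ deep edges. You bound $|E(P)|$ by ``$O(m+n)$ homotopy classes, each carrying fewer than $t$ parallel edges''. Together this gives $O(t^2(m+n))$ deep edges, which is no better than the older $O(t^2m+tn)$ bound the paper attributes to the conference version of Chan--Keller--Smorodinsky. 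It is not $27t(m+n)$. The linear dependence on $t$ matters: the lemma is what yields $27g_t(\vn)$ in \Cref{prop:upper_restricted}, and from there the bound $\Theta_r(tn^{r-1})$ in \Cref{thm:main} with a constant independent of $t$. You are right that any absolute constant would suffice downstream, but your scheme loses a full factor of $t$, not a constant. The loss is inherent to the mechanism. The sharp bound would need $|E(P)|=O(m+n)$, i.e.\ bounded \emph{average} multiplicity per class, whereas ``common block $\Rightarrow K_{t,t}$'' can only cap each multiplicity at $t-1$.

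The structural steps also have gaps. Write $x(\cdot)$, $y(\cdot)$ for the fixed coordinate of a vertical or horizontal segment.

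\emph{Planarity of $P$.} The drawing of $P$ ``along $v$'' is not a plane drawing. After contracting horizontals, the edge joining the first and last horizontal of a block runs through the $t-2$ intermediate horizontals. Suppose such an intermediate $w$ is itself the endpoint of a block-edge leaving upward along a vertical to the left of $v$, and of another leaving downward along a vertical to the right of $v$. Then any detour of the block-edge around $w$, on either side, crosses one of them. So Euler's formula is not available as claimed.

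\emph{Step~3.} A middle horizontal of the block on $v_j$ that stops before $v_{j+1}$ becomes, after contraction, a point on the edge $e_j$ itself. It is not a vertex in the interior of the lens bounded by $e_j$ and $e_{j+1}$. Hence an ``empty'' lens does not force the blocks on $v_j$ and $v_{j+1}$ to coincide, and no $K_{t,t}$ is produced. You could instead call the lens nonempty whenever the rectangle $[x(v_j),x(v_{j+1})]\times[y(h),y(h')]$ contains a horizontal endpoint. Then the common-block claim becomes true. But you must then bound how many such rectangles, coming from different and possibly nested pairs $(h,h')$, a single endpoint can lie in. That is precisely the part you deferred, and even resolving it only recovers the $t^2$ bound above.
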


Our proof of \Cref{prop:upper_restricted} proceeds via a geometric reduction from general dimension $d$ to the planar case $d = 2$ (see \Cref{lem:COZ_CKS}). Since the main ideas are most transparent when $d = 3$, we first present the argument in this special case and then extend it to arbitrary $d$.

\begin{proof}[Proof of \Cref{prop:upper_restricted} assuming $d = 3$]
    Suppose $t \ge 2$ and let $(\cB_1, \cB_2, \cB_3)$ be a restricted family with $|\cB_i| = n_i$ for $i = 1, 2, 3$. Assume that $\cH$ is the intersection hypergraph of $(\cB_1, \cB_2, \cB_3)$. Recall that $\wz_3^*(n_1, n_2, n_3; t)$ denotes the maximum possible value of $e(\cH)$, provided that every pair of boxes $\bm{b}_1 \in \cB_1, \, \bm{b}_2 \in \cB_2$ intersect and that $\cH$ contains no $K_{t, t, t}^{(3)}$. 
    
    In the following, we will use $x,y,z$ to denote the first, second, and third coordinate of $\mathbb{R}^3$ respectively.
    Recall that a restricted family is always separated. Therefore, 
    as a crucial reduction,
    we may assume without loss of generality that the following holds. 
    \vspace{-0.5em}
    \begin{itemize}
        \item The plane $\{x = u\}$ contains exactly one box from $\cB_1$ for all $u\in [n_1]$, and the plane $\{y = v\}$ contains exactly one box from $\cB_2$ for all $v\in [n_2]$.
    \end{itemize}
    \vspace{-0.5em}

    Formally, applying a piecewise linear transformation along any coordinate axis to all boxes in $\mathcal{B}_1 \cup \mathcal{B}_2 \cup \mathcal{B}_3$ preserves their intersection hypergraph.
    Thus, we may rescale the $x$-coordinates and the $y$-coordinates by piecewise linear functions to ensure that the 
    boxes in $\mathcal{B}_1$ and $\mathcal{B}_2$ are evenly spaced. We can do this in such a way
    that, after projecting onto the $xy$-plane, the complete intersection pattern between $\mathcal{B}_1$ and $\mathcal{B}_2$ forms the grid $[n_1] \times [n_2]$, since $(\cB_1, \cB_2, \cB_3)$ is assumed to be restricted. 
    
    We now focus on the intersection of the configuration with the planes $\bm{\alpha}_k \eqdef \{y = x + k\}$,
    where $-(n_1-1) \le k \le (n_2-1)$. \hyperlink{figtwo}{Figure~2} illustrates this process for $\mathcal{B}_1$, $\mathcal{B}_2$, and $\mathcal{B}_3$, consisting of $6$ red, $4$ blue, and $2$ green rectangles, respectively. \hyperlink{figtwoA}{Figure~2.A} and \hyperlink{figtwoC}{Figure~2.C} depict the rectangles before and after rescaling, viewed from the top, respectively. 

    \begin{center}
    \begin{tikzpicture}[scale = 0.8]
        \clip (-1, -1) rectangle (8.2, 5.5);
        \filldraw[color = mgreen, opacity = 0.25] (2.22, 1.64) rectangle (5.28, 4.1);
        \filldraw[color = mgreen, opacity = 0.5] (3.66, 2.08) rectangle (4.17, 2.8);
        \foreach \x in {1, 1.5, 3.3, 3.9, 4.8, 6}
            \draw[dash pattern = on 1pt off 2pt] (\x, 0) -- (\x, 5);
        \foreach \y in {1, 1.8, 2.5, 4}
            \draw[dash pattern = on 1pt off 2pt] (0, \y) -- (7, \y);
        \draw[thick, color = mred] (1, 0.4) -- (1, 4.3);
        \draw[thick, color = mred] (1.5, 0.7) -- (1.5, 4.6);
        \draw[thick, color = mred] (3.3, 0.9) -- (3.3, 4.5);
        \draw[thick, color = mred] (3.9, 0.3) -- (3.9, 4.8);
        \draw[thick, color = mred] (4.8, 0.2) -- (4.8, 4.4);
        \draw[thick, color = mred] (6, 0.8) -- (6, 4.1);
        \draw[thick, color = mblue] (0.3, 1) -- (6.6, 1);
        \draw[thick, color = mblue] (0.6, 1.8) -- (6.2, 1.8);
        \draw[thick, color = mblue] (0.5, 2.5) -- (6.8, 2.5);
        \draw[thick, color = mblue] (0.8, 4) -- (6.3, 4);
        \draw (0.7, 3.7) -- (1.3, 4.3);
        \draw (0.7, 2.2) -- (1, 2.5) -- (1.5, 4) -- (1.8, 4.3);
        \draw (0.7, 1.5) -- (1, 1.8) -- (1.5, 2.5) -- (3.3, 4) -- (3.6, 4.3);
        \draw (0.7, 0.7) -- (1, 1) -- (1.5, 1.8) -- (3.3, 2.5) -- (3.9, 4) -- (4.2, 4.3);
        \draw[ultra thick] (1.2, 0.7) -- (1.5, 1) -- (3.3, 1.8) -- (3.9, 2.5) -- (4.8, 4) -- (5.1, 4.3);
        \draw (3, 0.7) -- (3.3, 1) -- (3.9, 1.8) -- (4.8, 2.5) -- (6, 4) -- (6.3, 4.3);
        \draw (3.6, 0.7) -- (3.9, 1) -- (4.8, 1.8) -- (6, 2.5) -- (6.3, 2.8);
        \draw (4.5, 0.7) -- (4.8, 1) -- (6, 1.8) -- (6.3, 2.1);
        \draw (5.7, 0.7) -- (6.3, 1.3);
        \foreach \x in {1, 1.5, 3.3, 3.9, 4.8, 6}
            \foreach \y in {1, 1.8, 2.5, 4}
                \filldraw[color = mmagenta] (\x, \y) circle (0.05);
        \node at (1.25, 0.45) {\footnotesize $\widetilde{\bm{\alpha}}$};
        \draw[->] (-0.3, -0.3) -- (-0.3, 0.5);
        \draw[->] (-0.3, -0.3) -- (0.5, -0.3);
        \node at (-0.3, 0.7) {\footnotesize $y$};
        \node at (0.7, -0.3) {\footnotesize $x$};
        \node at (3.6, -0.75) {\footnotesize \textbf{\hypertarget{figtwoA}{Figure 2.A}}};
    \end{tikzpicture}
    \begin{tikzpicture}[scale = 0.8]
    
        \clip (-1, -1) rectangle (8.2, 5.5);
    
        \pgfmathsetmacro{\xone}{1.396694}
        \pgfmathsetmacro{\xtwo}{3.161233}
        \pgfmathsetmacro{\xthree}{4.044299}
        \pgfmathsetmacro{\xfour}{5.673643}
    
        \def\H{4} 
    
        \pgfmathsetmacro{\ytopone}{4.7}
        \pgfmathsetmacro{\ytoptwo}{4.0214847855139135}
        \pgfmathsetmacro{\ytopthree}{4.089043998446608}
        \pgfmathsetmacro{\ytopfour}{4.496344988714392}
    
        \foreach \X in {\xone,\xtwo,\xthree,\xfour}
            \draw[dash pattern = on 1pt off 2pt] (\X, 0) -- (\X, 5);
    
        %
        %
        %
    
        \draw[thick, color = mred]   (\xone,   {(\ytopone-\H)+0.8})       -- (\xone,   {\ytopone-0.7});
        \draw[thick, color = mred]   (\xtwo,   {(\ytoptwo-\H)+1.2})       -- (\xtwo,   {\ytoptwo-2.0});
        \draw[thick, color = mred]   (\xthree, {(\ytopthree-\H)+1.9})     -- (\xthree, {\ytopthree-0.9});
        \draw[thick, color = mred]   (\xfour,  {(\ytopfour-\H)+1.0})      -- (\xfour,  {\ytopfour-1.2});
    
        \draw[thick, color = mblue]  (\xone,   {(\ytopone-\H)+1.1})       -- (\xone,   {\ytopone-1.3});
        \draw[thick, color = mblue]  (\xtwo,   {(\ytoptwo-\H)+0.6})       -- (\xtwo,   {\ytoptwo-0.6});
        \draw[thick, color = mblue]  (\xthree, {(\ytopthree-\H)+1.5})     -- (\xthree, {\ytopthree-1.8});
        \draw[thick, color = mblue]  (\xfour,  {(\ytopfour-\H)+1.3})      -- (\xfour,  {\ytopfour-0.8});
    
        \draw[ultra thick, color = mmagenta] (\xone,   {(\ytopone-\H)+1.1})   -- (\xone,   {\ytopone-1.3});
        \draw[ultra thick, color = mmagenta] (\xtwo,   {(\ytoptwo-\H)+1.2})   -- (\xtwo,   {\ytoptwo-2.0});
        \draw[ultra thick, color = mmagenta] (\xthree,   {(\ytopthree-\H)+1.9}) -- (\xthree,   {\ytopthree-1.8});
        \draw[ultra thick, color = mmagenta] (\xfour,  {(\ytopfour-\H)+1.3})  -- (\xfour,  {\ytopfour-1.2});
    
        \def\s{0.96}
        \coordinate (T0) at (0.7*1.414, 4.7);
    
        \path
          (T0) ++(0:\s*0.424264) coordinate (T1)
               ++(-21.037511:\s*1.969772) coordinate (T2)
               ++(4.398705:\s*0.921954) coordinate (T3)
               ++(14.036243:\s*1.749286) coordinate (T4)
               ++(0:\s*0.424264) coordinate (T5);
    
        \coordinate (B0) at ($(T0)+(0,-\H)$);
        \coordinate (B1) at ($(T1)+(0,-\H)$);
        \coordinate (B2) at ($(T2)+(0,-\H)$);
        \coordinate (B3) at ($(T3)+(0,-\H)$);
        \coordinate (B4) at ($(T4)+(0,-\H)$);
        \coordinate (B5) at ($(T5)+(0,-\H)$);
    
        \draw[ultra thick]
          (T0) -- (T1) -- (T2) -- (T3) -- (T4) -- (T5)
          -- (B5)
          -- (B4) -- (B3) -- (B2) -- (B1) -- (B0)
          -- cycle;
    
    
    
        \draw[thick, color=mgreen]
        (2.808325,1.550398)
        -- (3.161233,1.418977)
        -- (4.044299,1.489688)
        -- (5.673643,1.90)
        -- (5.815043,1.90);
        
        \filldraw[color = mgray] (3.161233,1.418977) circle (0.05);
        \filldraw[color = mgray] (5.673643,1.903952) circle (0.05);
        
        \draw[thick, color=mgreen]
        (3.686031,2.052860)
        -- (4.044299,2.081144)
        -- (4.393138,2.185997);
    
        \filldraw[color = mgray] (4.044299,2.081144) circle (0.05);
    
        \node at (6.27, 4.7) {\footnotesize $\widetilde{\bm{\alpha}}$};
    
        \draw[->] (-0.3, -0.3) -- (-0.3, 0.5);
        \draw[->] (-0.3, -0.3) -- (0.2, -0.5);
        \draw[->] (-0.3, -0.3) -- (0.2, -0.1);
        \node at (-0.3, 0.7) {\footnotesize $z$};
        \node at (0.35, -0.55) {\scriptsize $x$};
        \node at (0.35, -0.05) {\scriptsize $y$};
        \node at (3.6, -0.75) {\footnotesize \textbf{\hypertarget{figtwoB}{Figure 2.B}}};
    
    \end{tikzpicture}
    
    \vspace{1em}
    
    \begin{tikzpicture}[scale = 0.8]
        \clip (-1, -1) rectangle (8.2, 5.5);
        \filldraw[color = mgreen, opacity = 0.25] (2.4, 1.8) rectangle (5.4, 4.1);
        \filldraw[color = mgreen, opacity = 0.5] (3.6, 2.4) rectangle (4.3, 3.2);
        \foreach \x in {1, 2, 3, 4, 5, 6}
            \draw[dash pattern = on 1pt off 2pt] (\x, 0) -- (\x, 5);
        \foreach \y in {1, 2, 3, 4}
            \draw[dash pattern = on 1pt off 2pt] (0, \y) -- (7, \y);
        \draw[thick, color = mred] (1, 0.4) -- (1, 4.3);
        \draw[thick, color = mred] (2, 0.7) -- (2, 4.6);
        \draw[thick, color = mred] (3, 0.9) -- (3, 4.5);
        \draw[thick, color = mred] (4, 0.3) -- (4, 4.8);
        \draw[thick, color = mred] (5, 0.2) -- (5, 4.4);
        \draw[thick, color = mred] (6, 0.8) -- (6, 4.1);
        \draw[thick, color = mblue] (0.3, 1) -- (6.6, 1);
        \draw[thick, color = mblue] (0.6, 2) -- (6.2, 2);
        \draw[thick, color = mblue] (0.5, 3) -- (6.8, 3);
        \draw[thick, color = mblue] (0.8, 4) -- (6.3, 4);
        \draw (0.7, 3.7) -- (1.3, 4.3);
        \draw (0.7, 2.7) -- (2.3, 4.3);
        \draw (0.7, 1.7) -- (3.3, 4.3);
        \draw (0.7, 0.7) -- (4.3, 4.3);
        \draw[ultra thick] (1.7, 0.7) -- (5.3, 4.3);
        \draw (2.7, 0.7) -- (6.3, 4.3);
        \draw (3.7, 0.7) -- (6.3, 3.3);
        \draw (4.7, 0.7) -- (6.3, 2.3);
        \draw (5.7, 0.7) -- (6.3, 1.3);
        \foreach \x in {1, 2, 3, 4, 5, 6}
            \foreach \y in {1, 2, 3, 4}
                \filldraw[color = mmagenta] (\x, \y) circle (0.05);
        \node at (1, 5.2) {\tiny $x = 1$};
        \node at (2, 5.2) {\tiny $x = 2$};
        \node at (3, 5.2) {\tiny $x = 3$};
        \node at (4, 5.2) {\tiny $x = 4$};
        \node at (5, 5.2) {\tiny $x = 5$};
        \node at (6, 5.2) {\tiny $x = 6$};
        \node at (7.6, 1) {\tiny $y = 1$};
        \node at (7.6, 2) {\tiny $y = 2$};
        \node at (7.6, 3) {\tiny $y = 3$};
        \node at (7.6, 4) {\tiny $y = 4$};
        \node at (0.5, 3.5) {\footnotesize $\bm{\alpha}_3$};
        \node at (0.5, 2.5) {\footnotesize $\bm{\alpha}_2$};
        \node at (0.5, 1.5) {\footnotesize $\bm{\alpha}_1$};
        \node at (0.5, 0.5) {\footnotesize $\bm{\alpha}_0$};
        \node at (1.5, 0.5) {\footnotesize $\bm{\alpha}_{-1}$};
        \node at (2.5, 0.5) {\footnotesize $\bm{\alpha}_{-2}$};
        \node at (3.5, 0.5) {\footnotesize $\bm{\alpha}_{-3}$};
        \node at (4.5, 0.5) {\footnotesize $\bm{\alpha}_{-4}$};
        \node at (5.5, 0.5) {\footnotesize $\bm{\alpha}_{-5}$};
        \draw[->] (-0.3, -0.3) -- (-0.3, 0.5);
        \draw[->] (-0.3, -0.3) -- (0.5, -0.3);
        \node at (-0.3, 0.7) {\footnotesize $y$};
        \node at (0.7, -0.3) {\footnotesize $x$};
        \node at (3.6, -0.75) {\footnotesize \textbf{\hypertarget{figtwoC}{Figure 2.C}}};
    \end{tikzpicture}
    \begin{tikzpicture}[scale = 0.8]
        \clip (-1, -1) rectangle (8.2, 5.5);
        \foreach \x in {1, 2, 3, 4}
            \draw[dash pattern = on 1pt off 2pt] (1.414*\x, 0) -- (1.414*\x, 5);
        \draw[thick, color = mred] (1*1.414, 1.3) -- (1*1.414, 3.8);
        \draw[thick, color = mred] (2*1.414, 1.7) -- (2*1.414, 2.5);
        \draw[thick, color = mred] (3*1.414, 2.4) -- (3*1.414, 3.6);
        \draw[thick, color = mred] (4*1.414, 1.5) -- (4*1.414, 3.3);
        \draw[thick, color = mblue] (1*1.414, 1.6) -- (1*1.414, 3.2);
        \draw[thick, color = mblue] (2*1.414, 1.1) -- (2*1.414, 3.9);
        \draw[thick, color = mblue] (3*1.414, 2.0) -- (3*1.414, 2.7);
        \draw[thick, color = mblue] (4*1.414, 1.8) -- (4*1.414, 3.7);
        \draw[ultra thick, color = mmagenta] (1*1.414, 1.6) -- (1*1.414, 3.2);
        \draw[ultra thick, color = mmagenta] (2*1.414, 1.7) -- (2*1.414, 2.5);
        \draw[ultra thick, color = mmagenta] (3*1.414, 2.4) -- (3*1.414, 2.7);
        \draw[ultra thick, color = mmagenta] (4*1.414, 1.8) -- (4*1.414, 3.3);
        \draw[thick, color = mgreen] (2.6*1.414, 2.5) -- (3.2*1.414, 2.5);
        \draw[thick, color = mgreen] (1.8*1.414, 1.9) -- (4.1*1.414, 1.9);
        \filldraw[color = mgray] (3*1.414, 2.5) circle (0.05);
        \filldraw[color = mgray] (2*1.414, 1.9) circle (0.05);
        \filldraw[color = mgray] (4*1.414, 1.9) circle (0.05);
        \draw[ultra thick] (0.7*1.414, 0.5) rectangle (4.3*1.414, 4.5);
        \node at (6.5, 4.6) {\footnotesize $\bm{\alpha}_{-1}$};
        \node at (1*1.414, 5.2) {\tiny $(x, y) = (2, 1)$};
        \node at (2*1.414, -0.2) {\tiny $(x, y) = (3, 2)$};
        \node at (3*1.414, 5.2) {\tiny $(x, y) = (4, 3)$};
        \node at (4*1.414, -0.2) {\tiny $(x, y) = (5, 4)$};
        \draw[->] (-0.3, -0.3) -- (-0.3, 0.5);
        \draw[->] (-0.3, -0.3) -- (0.2, -0.5);
        \draw[->] (-0.3, -0.3) -- (0.2, -0.1);
        \node at (-0.3, 0.7) {\footnotesize $z$};
        \node at (0.35, -0.55) {\scriptsize $x$};
        \node at (0.35, -0.05) {\scriptsize $y$};
        \node at (3.6, -0.75) {\footnotesize \textbf{\hypertarget{figtwoD}{Figure 2.D}}};
    \end{tikzpicture}
    \begin{tikzpicture}
        \clip (-5, -0.25) rectangle (5, 0.5);
        \node at (0, 0) {\textbf{\hypertarget{figtwo}{Figure 2.}} An illustration of the rescaling.};
    \end{tikzpicture}
    \end{center}
    As shown in \hyperlink{figtwoC}{Figure~2.C}, we partition the intersections into parallel diagonal planes, corresponding, prior to rescaling, to piecewise linear surfaces (see \hyperlink{figtwoA}{Figure~2.A}). \hyperlink{figtwoD}{Figure~2.D} depicts one such plane, $\bm{\alpha}_{-1}$, corresponding to $\widetilde{\bm{\alpha}}$ in \hyperlink{figtwoB}{Figure~2.B}. Before rescaling, the green rectangles intersect $\widetilde{\bm{\alpha}}$ in parallel piecewise linear segments. We remark that throughout \hyperlink{figtwo}{Figure~2} each red-blue intersection is colored purple, while each triple intersection of red, blue, and green is colored gray. 
    
    Consider the sets of intersections
    \[
    \cS_k \eqdef \{\bm{b} \cap \bm{\alpha}_k : \bm{b} \in \cB_3\}, \qquad \cT_k \eqdef \bigl\{ \bm{b}_1^s \cap \bm{b}_2^{s+k} \cap \bm{\alpha}_k : s \in [n_1], \, s + k \in [n_2] \bigr\}. 
    \]
    Then $\cS_k, \cT_k$ are two families of parallel segments on $\bm{\alpha}_k$, where any pair of segments $\bm{m} \in \cS_k, \, \bm{n} \in \cT_k$ are perpendicular to each other. We also have $|\cS_k| \le n_3$ and $|\cT_{1-n_1}| + \dots + |\cT_{n_2-1}| = n_1n_2$. 

    Fix an arbitrary $k \in \{1 - n_1, \dots, n_2 - 1\}$. We claim that the intersection graph $G_k$ between $\cS_k$ and $\cT_k$ is $K_{t, t}$-free. Suppose to the contrary that $\{\bm{m}_1, \dots, \bm{m}_t\} \subseteq \cS_k$ and $\{\bm{n}_1, \dots, \bm{n}_t\} \subseteq \cT_k$ induce a $K_{t, t}$ in $G_k$, where $\bm{m}_j = \bm{b}_3^j \cap \bm{\alpha}_k$ for some $\bm{b}_3^j \in \cB_3$ and $\bm{n}_j = \bm{b}_1^{s_j} \cap \bm{b}_2^{s_j+k} \cap \bm{\alpha}_k$. Consider
    \[
    \bigl\{ \bm{b}_1^{s_1}, \dots, \bm{b}_1^{s_t} \bigr\} \subseteq \cB_1, \qquad \bigl\{ \bm{b}_2^{s_1+k}, \dots, \bm{b}_2^{s_t+k} \bigr\} \subseteq \cB_2, \qquad \bigl\{ \bm{b}_3^1, \dots, \bm{b}_3^t \bigr\} \subseteq \cB_3. 
    \]
    Each pair of the above boxes from distinct families intersects. Since our boxes have Helly number $2$ (see \Cref{obs:box_helly}), the above $3t$ boxes induce a $K_{t, t, t}^{(3)}$-subgraph in $\cH$, a contradiction. 
    
    It thus follows from \Cref{lem:COZ_CKS} that $e(G_k) \le 27t(|\cS_k| + |\cT_k|)$. By summing over $k$, we obtain
    \[
    e(\cH) = \sum_{k=1-n_1}^{n_2-1} e(G_k) \le \sum_{k=1-n_1}^{n_2-1} 27t(|\cS_k| + |\cT_k|) \le 27t \bigl( n_3(n_1+n_2) + n_1n_2 \bigr) = 27g_t(n_1, n_2, n_3). \qedhere
    \]
\end{proof}

\begin{proof}[Proof of \Cref{prop:upper_restricted}]
    Let $(\cB_1, \dots, \cB_d)$ be a restricted family, where $|\cB_i| = n_i$ for $i = 1, \dots, d$, and denote by $\cH$ the intersection hypergraph of $(\cB_1, \dots, \cB_d)$. We will show that $e(\cH) \le 27g_t(n_1, \dots, n_d)$. 

    In the following, we will use $x_i$ to denote the $i$-th coordinate of $\mathbb{R}^d$. Similar to the $d = 3$ case, since $(\cB_1, \dots, \cB_d)$ is separated, we may assume without loss of generality that the following holds. 
    \vspace{-0.5em}
    \begin{itemize}
        \item For $i = 1, \dots, d - 1$, the hyperplane $\{x_i = u\}$ contains exactly one box from $\cB_i$ for all $u \in [n_i]$. 
    \end{itemize}
    \vspace{-0.5em}

    Let $\vec{e}_i$ be the unit vector in the $i$-th coordinate direction of $\R^d$. For instance, $\vec{e}_1 = (1, 0, \dots, 0)$. Consider the ``lower-half'' boundary of the intersection grid between $\cB_1, \dots, \cB_{d-1}$, which is
    \begin{align*}
        P &\eqdef ([n_1] \times \dots \times [n_{d-1}] \times \{0\}) \cap (\{x_1 = 1\} \cup \dots \cup \{x_{d-1} = 1\}) \\
        &= \bigl\{ (x_1, \dots, x_{d-1}, 0) \in [n_1] \times \dots \times [n_{d-1}] \times \{0\} : (x_1-1) \cdots (x_{d-1}-1) = 0 \bigr\}. 
    \end{align*}
    It follows that $|P| \le \frac{n_1 \cdots n_{d-1}}{n_1} + \dots + \frac{n_1 \cdots n_{d-1}}{n_{d-1}} = \frac{g_t(n_1, \dots, n_{d-1})}{t}$. For each point $p \in P$, let $\bm{\alpha}_p$ be the 2-dimensional plane through $p$ that is spanned by vectors $\vec{e}_1 + \dots + \vec{e}_{d-1}$ and $\vec{e}_d$. Observe that every intersection between $\cB_1, \dots, \cB_d$ lies in exactly one of the planes $\bm{\alpha}_p$. For $p = (p_1, \dots, p_{d-1}, 0)$, define
    \[
    \cS_p \eqdef \{\bm{b} \cap \bm{\alpha}_p : \bm{b} \in \cB_d\}, \qquad \cT_p \eqdef \bigl\{ \bm{b}_1^{p_1+s} \cap \dots \cap \bm{b}_{d-1}^{p_{d-1}+s} \cap \bm{\alpha}_p : p_i + s \in [n_i] \, (\forall i \in [d-1]) \bigr\}. 
    \]
    Then $\cS_p, \cT_p$ are two families of parallel segments on $\bm{\alpha}_p$, where any pair of segments $\bm{m} \in \cS_p, \, \bm{n} \in \cT_p$ are perpendicular to each other. We also have $|\cS_p| \le n_d$ and $\sum_{p \in P} |\cT_p| = n_1 \cdots n_{d-1}$. 

    Fix any $p \in P$. We claim that the intersection graph $G_p$ between $\cS_p$ and $\cT_p$ is $K_{t, t}$-free. Suppose for the sake of contradiction that segments $\{\bm{m}_1, \dots, \bm{m}_t\} \subseteq \cS_p$ and $\{\bm{n}_1, \dots, \bm{n}_t\} \subseteq \cT_p$ induce a $K_{t, t}$ in $G_p$, where $\bm{m}_j = \bm{b}_d^j \cap \bm{\alpha}_p$ for some $\bm{b}_d^j \in \cB_d$ and $\bm{n}_j = \bm{b}_1^{p_1+s_j} \cap \dots \cap \bm{b}_{d-1}^{p_{d-1}+s_j} \cap \bm{\alpha}_p$. Consider
    \[
    \bigl\{ \bm{b}_1^{p_1+s_1}, \dots, \bm{b}_1^{p_1+s_t} \bigr\} \subseteq \cB_1, \quad \dots, \quad \bigl\{ \bm{b}_{d-1}^{p_{d-1}+s_1}, \dots, \bm{b}_{d-1}^{p_{d-1}+s_t} \bigr\} \subseteq \cB_{d-1}, \quad \bigl\{ \bm{b}_d^1, \dots, \bm{b}_d^t \bigr\} \subseteq \cB_d. 
    \]
    Each pair of the above boxes from distinct families intersects. Since our boxes have Helly number $2$ (see \Cref{obs:box_helly}), the above $dt$ boxes induce a $K_{t, \dots, t}^{(d)}$-subgraph in $\cH$, a contradiction. 
    
    It thus follows from \Cref{lem:COZ_CKS} that $e(G_p) \le 27t(|\cS_p| + |\cT_p|)$. By summing over $p$, we obtain
    \begin{align*}
        e(\cH) = \sum_{p \in P} e(G_p) &\le \sum_{p \in P} 27t(|\cS_p| + |\cT_p|) \le 27t( n_d \cdot |P| + n_1 \cdots n_{d-1} ) \\
        &\le 27t \bigl( n_d \cdot \tfrac{g_t(n_1, \dots, n_{d-1})}{t} + n_1 \cdots n_{d-1} \bigr) = 27g_t(n_1, \dots, n_d). \qedhere
    \end{align*}
\end{proof}

\subsection{From the restricted case to the separated case} \label{sec:upper_separated}

Secondly, we prove \Cref{thm:upper_general} in the separated case. 

\begin{proposition} \label{prop:upper_separated}
    Suppose $d \ge 2$ and $t \ge 2$. There exists $M_d > 0$, a constant depending only on $d$, such that $z_d^*(\vn; t) \le M_d \cdot g_t(\vn)$ holds for every $d$-size-vector $\vn$. 
\end{proposition}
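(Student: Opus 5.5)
The plan is to reduce \Cref{prop:upper_separated} to \Cref{prop:upper_restricted} by cutting a separated family into pieces on which the first $d-1$ families intersect completely. I will induct on $d$. The case $d=2$ is exactly \Cref{thm:CKS}, via \Cref{prop:upper_restricted}, since every separated $\vF_2$-family is restricted.

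\textbf{Setup.} As in the proof of \Cref{prop:upper_restricted}, I rescale each axis piecewise linearly so that box $\bm{b}_i^u\in\cB_i$ lies in $\{x_i=u\}$, for $i\le d-1$ and $u\in[n_i]$. By \Cref{obs:box_helly}, a tuple $(\bm{b}_1^{u_1},\dots,\bm{b}_{d-1}^{u_{d-1}})$ has nonempty common intersection iff it intersects pairwise. Pairwise intersection means two things: $u_j$ lies in the $x_j$-projection of $\bm{b}_i^{u_i}$ for all $i\ne j$, and the $x_d$-projections overlap. The common intersection is then a segment parallel to $\vec e_d$ over the grid point $(u_1,\dots,u_{d-1})$. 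This is the same slicing picture as before. The only difference is that only some grid points are \emph{active}: the box over them is nonempty and carries a vertical segment.

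\textbf{Decomposition and slicing.} I will cover the active tuples by ``combinatorial boxes'' $I_1\times\dots\times I_{d-1}$, with each $I_j\subseteq[n_j]$ a dyadic interval, such that every tuple in the box is active. The cover comes from a $(d-1)$-dimensional dyadic decomposition of the grid. A box of $\cB_i$ is kept in a cell only if its projections contain the whole cell range in the coordinates $j\ne i$. Otherwise the interaction is pushed to children, where it is a lower-complexity event that I can handle by induction on $d$ after projecting away a coordinate. Within each cell, the subfamily together with the boxes of $\cB_d$ meeting the cell is restricted, apart from the $x_d$-overlap condition. That condition is handled by a further one-dimensional dyadic split of the $x_d$-intervals. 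Inside a cell, \Cref{prop:upper_restricted} gives a bound of $27\,g_t$ of the local sizes. Alternatively, I can re-run the diagonal slicing and apply \Cref{lem:COZ_CKS} directly to each slice.

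\textbf{Main obstacle.} The hard step is the accounting. A naive dyadic cover charges each box to $O(\log n)$ cells, and this reintroduces exactly the $\frac{\log n}{\log\log n}$-type loss that the theorem excludes. To avoid it, I would apply \Cref{lem:COZ_CKS} in its \emph{weighted} form $27t(m+n)$ \emph{per diagonal slice}, rather than per cell. Each vertical segment over an active grid point belongs to exactly one diagonal plane $\bm\alpha_p$, so the $t\,n_1\cdots n_{d-1}$ term is charged once in total. Each box of $\cB_d$ meets a given plane $\bm\alpha_p$ in at most one segment, giving the term $t\,n_d|P|$. The inactive grid points do no harm: they only delete segments from $\cT_p$. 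The genuine subtlety is that the $K_{t,t}$-freeness argument in the slice used restrictedness to guarantee that the boxes in a slice biclique intersect pairwise across $\cB_1,\dots,\cB_{d-1}$. Without restrictedness, two segments of $\cT_p$ come from different active points, and their parent boxes need not intersect each other. I therefore expect the core of the proof to be this: replace the single slice bipartite graph by a bounded number, depending only on $d$, of bipartite graphs, each built from an order-type class of the active points. The classes are chosen so that within a class, activeness of two points along a diagonal forces the cross pairs to intersect. This uses that projections are intervals and the points are collinear on the diagonal, so ``interval contains both endpoints'' gives ``interval contains everything between''. Each class graph is then $K_{t,t}$-free and obeys \Cref{lem:COZ_CKS}, and summing yields $M_d\,g_t(\vn)$ with $M_d$ depending only on $d$. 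If this classification fails, I would fall back to the cell decomposition and try to make its sizes decay geometrically, with the induction on $d$ absorbing the boundary interactions.
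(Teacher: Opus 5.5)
There is a genuine gap, and it sits at the step you yourself call the core. Applying \Cref{lem:COZ_CKS} slice by slice needs each slice graph $G_p$ to be $K_{t,t}$-free. In the restricted case this came from restrictedness: a $K_{t,t}$ in $G_p$ uses $t$ active points on one diagonal, and lifting it to a $K^{(d)}_{t,\dots,t}$ in $\cH$ requires every cross tuple among them to be active. Your remedy is $O_d(1)$ order-type classes such that two points of a class on a common diagonal always have active cross pairs. This is false in general. For $d=3$ and $u,v\in[n]$, take $\bm b_1^u=\{u\}\times[u-\tfrac13,u+\tfrac13]\times[0,1]$ and $\bm b_2^v=[v-\tfrac13,v+\tfrac13]\times\{v\}\times[0,1]$. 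Let $\cB_3$ consist of $n$ rectangles $[0,n+1]^2\times\{w_j\}$ with distinct $w_j\in(0,1)$. This family is separated and already rescaled, and $\bm b_1^u\cap\bm b_2^v\neq\varnothing$ iff $u=v$, so $\cH$ is $K_{t,t,t}$-free. Yet all active points lie on the diagonal $k=0$, and $G_0=K_{n,n}$. A class with your property meets this diagonal in at most one point, so you would need $n$ classes. Interval convexity gives nothing, because the $y$-extent of $\bm b_1^u$ contains only the integer $u$. The proposition itself is fine here ($e(\cH)=n^2$), but the per-slice bound cannot be invoked. The rest of the plan does not fill the hole. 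The dyadic cells lose a $\log$ factor, as you note. The ``induction on $d$ after projecting away a coordinate'' is never specified; the projected configuration is still a $d$-partite problem, not a canonical family in $\R^{d-1}$.

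The missing idea is to avoid slicing non-restricted families altogether. The paper records which pairs $(\cB_i,\cB_j)$ are incomplete in an auxiliary graph $G$ on $[d]$ and inducts on $e(G)$. If $G$ is a star, the other $d-1$ families are pairwise complete, hence $(d-1)$-wise complete by \Cref{obs:box_helly}. The family is then restricted, and \Cref{prop:upper_restricted} applies. Otherwise $G$ has two disjoint edges or a triangle. The Chan--Keller--Smorodinsky biclique cover (\Cref{lem:biclique_cover}), with a constant parameter $b$, splits $\cH$ into $O(b\log^{d-1}b)$ pieces of two kinds. In one kind the pair becomes complete, so there is one fewer edge of $G$ and induction applies. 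In the other kind those two families shrink by a factor $b$. Covering two disjoint pairs, or the three pairs of a triangle, and then evenly splitting the remaining families, the recursive term carries a factor $O_d(\log^{3d-3}b/b)\le\tfrac12$. This geometric decay yields $M_d\,g_t(\vn)$ with no logarithmic loss. A single cover would not suffice: it shrinks only two coordinates, so the overhead $C_d\log^{d-1}b$ exceeds the gain. That is exactly why the star case must be closed off separately by \Cref{prop:upper_restricted}. Your fallback of ``cell decompositions with geometrically decaying sizes'' points in this direction, but without the biclique-cover lemma and this bookkeeping it is not a proof.
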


To deduce \Cref{prop:upper_separated}, we follow the Chan--Keller--Smorodinsky argument~\cite[Section 2.2.2]{chan_keller_smorodinsky} closely, beginning with their efficient biclique cover result. A \emph{biclique} is a complete bipartite graph $K_{m, n}$ of \emph{size} $m \times n$. We refer to a subgraph of the biclique $K_{m, n}$ as a \emph{partial} biclique of size $m \times n$. Here we say that ``graph $G$ is covered by $H_1, \dots, H_k$'' if ``there exist vertex sets $V_1, \dots, V_k$ such that $H_i = G[V_i]$ is an \textbf{induced} subgraph for any $i \in [k]$ and $E(G) = E(H_1) \cup \dots \cup E(H_k)$''. We shall use the same ``cover'' terminology and definition for hypergraphs as well. 

\begin{lemma}[{\cite[Lemma 11]{chan_keller_smorodinsky}}] \label{lem:biclique_cover}
    Suppose $m, n, b$ are positive integers with $b \le \min\{m, n\}$. Let $(\cA, \cB)$ be a $\bigl( [d],[d] \bigr)$-family of boxes in $\R^d$ with $|\cA| = m, \, |\cB| = n$. Assume further that $\cA \cup \cB$ is in general position, in the sense that no pair of $\ell$-dimensional facets belonging to distinct boxes and having the same directions lie on a common $\ell$-flat. Let $G$ be the intersection graph between $\cA$ and $\cB$. Then there exists a constant $C_d > 0$, depending only on $d$, such that $G$ can be covered\footnote{In the original statement of their paper, this is formulated as a partition of \(G\). However, in applications the bicliques and partial bicliques must preserve the intersection graph structure. In fact, the divide-and-conquer proof of the lemma itself shows that these bicliques and partial bicliques can be chosen as induced subgraphs that cover \(G\).} by a union of
    \vspace{-0.5em}
    \begin{itemize}
        \item at most $C_d \cdot b \log^{d-1} b$ many bicliques of arbitrary sizes, and 
        \vspace{-0.5em}
        \item at most $C_d \cdot b \log^{d-1} b$ many partial bicliques of sizes $\frac{m}{b} \times \frac{n}{b}$. 
    \end{itemize}
    \vspace{-0.5em}
\end{lemma}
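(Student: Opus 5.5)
We would prove \Cref{lem:biclique_cover} by a range-tree divide and conquer on the coordinates, with induction on $d$. The key fact is that two boxes intersect if and only if their projections to every coordinate axis intersect. The general position assumption lets us assume all $2(m+n)$ endpoints on each axis are distinct. Two closed intervals then intersect if and only if one of them contains an endpoint of the other. Every intersecting pair therefore falls into one of two cases: an $\cA$-interval contains a $\cB$-endpoint, or a $\cB$-interval contains an $\cA$-endpoint. We treat the two cases symmetrically, which costs only a constant factor. All the pieces we produce have the form $G[V_i]$ for vertex sets $V_i$, so they are automatically induced, as the notion of cover requires.

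\emph{Base step on one axis.} Sort the endpoints along the first axis. Cut the line into $b$ slabs, each containing at most $2m/b$ endpoints of $\cA$ and at most $2n/b$ endpoints of $\cB$; this is possible after merging two quantile partitions, at the cost of $2b$ slabs. Build a balanced binary tree $T$ over the slabs, so $T$ has $O(b)$ nodes and depth $O(\log b)$. Each $\cA$-interval is the union of its two end-slabs and $O(\log b)$ canonical nodes, at most two per level, exactly as in a segment tree. For a node $v$, let $\cA_v$ be the $\cA$-boxes having $v$ as a canonical node, and let $\cB_v$ be the $\cB$-boxes with an endpoint in the slab range of $v$. Every box in $\cA_v$ meets every box in $\cB_v$ in coordinate $1$. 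Pairs whose $\cB$-endpoint lies in an end-slab of the $\cA$-interval are handled separately: for each slab $\sigma$, the $\cA$-boxes with an endpoint in $\sigma$ together with the $\cB$-boxes with an endpoint in $\sigma$ give an induced subgraph of size at most $\frac{2m}{b}\times\frac{2n}{b}$. We split this into $O(1)$ partial bicliques of size $\frac mb\times\frac nb$, so the slabs contribute $O(b)$ partial bicliques. For $d=1$ each $G[\cA_v\cup\cB_v]$ is a genuine biclique, which gives $O(b)$ bicliques, as claimed.

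\emph{Inductive step.} For $d\ge 2$, at each node $v$ we apply the induction hypothesis in dimensions $2,\dots,d$ to $(\cA_v,\cB_v)$, with $m_v=|\cA_v|$, $n_v=|\cB_v|$, and parameter
\[
b_v \eqdef \bigl\lceil b\cdot\max\{m_v/m,\; n_v/n\}\bigr\rceil .
\]
This choice makes the resulting partial bicliques have size at most $\frac{m_v}{b_v}\times\frac{n_v}{b_v}\le\frac mb\times\frac nb$. Within one level of $T$, each $\cA$-box is canonical at at most two nodes and each $\cB$-endpoint lies in exactly one node. Hence $\sum_v m_v\le 2m$ and $\sum_v n_v\le 2n$ over a level, so $\sum_v b_v=O(b)$ per level, where the ceilings add at most the $O(b)$ node count. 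Summing $C_{d-1}b_v\log^{d-2}b_v\le C_{d-1}b_v\log^{d-2}b$ over all $O(\log b)$ levels gives $O(b\log^{d-1}b)$ bicliques and partial bicliques. Adding the $O(b)$ leaf pieces and doubling for the symmetric case yields the constant $C_d$.

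\emph{Main obstacle.} The delicate point is the side condition $b_v\le\min\{m_v,n_v\}$ required to apply the induction hypothesis. When it fails, say $b_v>m_v$, we have $m_v<b_v$. In that case we would not recurse. Instead we split $\cB_v$ into about $b_v$ groups of size at most $n/b$, and take $\cA_v$ against each group as a partial biclique. Each such piece has size at most $\frac mb\times\frac nb$, because $m_v\le\frac mb\cdot b\max\{m_v/m,n_v/n\}$ and in this regime the relevant ratio is governed by the $n$-term. This charges $O(b_v)$ partial bicliques to $v$, which keeps the per-level count at $O(b)$. We expect this bookkeeping of degenerate small nodes, together with making the slab partition balanced for both families at once, to be the only real difficulty; the rest is the standard segment-tree counting above.
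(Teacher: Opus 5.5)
Most of your argument is sound. The segment tree over $O(b)$ slabs on the first axis and the slab pieces are fine. So is the observation that $G[\cA_v\cup\cB_v]$ equals the intersection graph of the projections to coordinates $2,\dots,d$. Your choice of $b_v$ does give $\frac{m_v}{b_v}\le\frac mb$, $\frac{n_v}{b_v}\le\frac nb$ and $b_v\le b$, the per-level budget $\sum_v b_v=O(b)$ is correct, and every piece is of the form $G[V_i]$.

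The gap is exactly in the step you flag as the main obstacle. When $b_v>m_v$, you take all of $\cA_v$ against groups of $\cB_v$ and assert that each piece has size at most $\frac mb\times\frac nb$. This needs $m_v\le\frac mb$. The inequality you invoke only gives $m_v\le\frac mb\cdot b_v$, and nothing forces $m_v\le\frac mb$. For instance, take $m=b=100$, so every partial biclique may contain only one box of $\cA$. A node $v$ can still have $m_v=5$ boxes of $\cA$ canonical at it while its slab range contains a quarter of the $\cB$-endpoints. Then $n_v\ge n/4$ and $b_v\ge 25>m_v$, and your pieces have five $\cA$-boxes. Splitting $\cA_v$ further into singletons costs about $m_v b_v$ pieces. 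That can be of order $b_v^2$ rather than $O(b_v)$, which breaks the budget.

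The repair is short. Since $b\le m$, we have $\lceil bm_v/m\rceil\le m_v$. So $b_v>m_v$ forces $b_v=\lceil bn_v/n\rceil\le n_v$, and at most one side condition can fail. In that case, apply the induction hypothesis with $b_v'\eqdef m_v\le\min\{m_v,n_v\}$. This yields $O(m_v\log^{d-2}b)$ bicliques and partial bicliques of size $1\times\frac{n_v}{m_v}$. Now cut the $\cB$-side of each partial biclique into groups of at most $\lfloor\frac nb\rfloor\ge\frac{n}{2b}$ boxes. This multiplies their number by at most $\frac{2bn_v}{nm_v}+1\le\frac{3b_v}{m_v}$, giving $O(b_v\log^{d-2}b)$ pieces at $v$, as required. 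The case $b_v>n_v$ is symmetric.

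A cleaner route is to induct on the statement with arbitrary target sizes $\mu,\nu\ge 1$, using $k\eqdef\lceil\max\{m/\mu,n/\nu\}\rceil$ in place of $b$. That statement has no side condition at all. Your slab construction and recursion go through verbatim with the same $(\mu,\nu)$ at every node, and $\sum_v k_v=O(k)$ per level.

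Finally, since $\log 1=0$, the hypothesis as literally stated is vacuous at nodes with $b_v=1$. Cover such nodes by the single partial biclique $G[\cA_v\cup\cB_v]$; it fits, because $b_v=1$ forces $m_v\le\frac mb$ and $n_v\le\frac nb$. Equivalently, read $\log$ as $\max\{1,\log\}$ throughout.
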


\Cref{lem:biclique_cover} is applicable to any pair of $\cB_i$ and $\cB_j$ in every separated $\vF_d$-family $(\cB_1, \dots, \cB_d)$. To see this, we consider the following perturbation. Enlarge each box $\bm{b} \in \bigcup_{i=1}^d \cB_i$ as 
\[
\bm{b} \eqdef [a_1, b_1] \times \dots \times [a_d, b_d] \quad \mapsto \quad \bm{b}^{\veps} \eqdef [a_1-\veps, b_1+\veps] \times \dots \times [a_d-\veps, b_d+\veps]. 
\]
For each $\bm{b} \in \bigcup_{i=1}^d \cB_i$, we pick a distinct sufficiently small $\veps_{\bm{b}} > 0$. Let $(\widetilde{\cB}_1, \dots, \widetilde{\cB}_d)$ be the family of boxes obtained by replacing every $\bm{b}$ with $\bm{b}^{\veps_{\bm{b}}}$. In this way, we know that $(\widetilde{\cB}_i, \widetilde{\cB}_j)$ forms a $\bigl( [d], [d] \bigr)$-family. Since all boxes are closed and any perturbation constant $\veps_{\bm{b}}$ is small enough, the perturbed family $(\widetilde{\cB}_i, \widetilde{\cB}_j)$ has exactly the same intersection graph as $(\cB_i, \cB_j)$ for all $i\neq j$. Moreover, after this perturbation, the collection of all boxes $\bigcup_{i=1}^d \widetilde{\cB}_i$ is in general position. It follows that \Cref{lem:biclique_cover} applies to $\widetilde{\cB}_i$ and $\widetilde{\cB}_j$, and hence it applies to the original $\cB_i$ and $\cB_j$ as well. We shall always use \Cref{lem:biclique_cover} in the above manner in the proof of \Cref{prop:upper_separated} below. 

\medskip
To show that an arbitrary separated family behaves similarly to a restricted separated family, we characterize such families by auxiliary graphs $G$ on the vertex set $[d]$, placing an edge between $i$ and $j$ if and only if $\cB_i$ and $\cB_j$ do not fully intersect. In this way, restricted families correspond to star graphs. When $G$ is not a star, our strategy is to estimate the maximum number of incidences recursively in terms of $e(G)$, with the help of efficient biclique covers (\Cref{lem:biclique_cover}). 

\begin{proof}[Proof of \Cref{prop:upper_separated}]
    Suppose $d, t \ge 2$. Recall that $z_d^*(n_1, \dots, n_d; t)$ denotes the maximum of $e(\cH)$ over all intersection hypergraphs $\cH$ of separated $\vF_d$-families $(\cB_1, \dots, \cB_d)$, where $|\cB_i| = n_i$ for $i \in [d]$, that forbid $K_{t, \dots, t}^{(d)}$. For each such $\cH$, we define an auxiliary graph $G$ on vertex set $[d]$, where $i$ and $j$ are adjacent if and only if $\cB_i$ and $\cB_j$ are incomplete (i.e., not every pair $\bm{b}_i \in \cB_i, \, \bm{b}_j \in \cB_j$ intersects). Let $z_G^*(\vn; t)$ denote the maximum number of edges in $\cH$ over all such hypergraphs whose auxiliary graph is a subgraph of $G$ on the same vertex set $[d]$ as $G$. 
    
    Abbreviate $u \eqdef e(G)$. Our goal is to find a constant $M_{u, d} > 0$ such that $z^*_G(\vn; t) \le M_{u, d} \cdot g_t(\vn)$ holds for every $d$-size-vector $\vn$. Once this has been done, \Cref{prop:upper_separated} follows easily upon setting $M_d \eqdef \max \bigl\{ M_{u, d} : 0 \le u \le \binom{d}{2} \bigr\}$. It therefore remains to construct $M_{u, d}$ by induction on $u$. 
    
    We first check the inductive base. If $u = 0$, then $G$ is empty graph. For all $1 \le i < j \le d$, any $\bm{b}_i \in \cB_i$ intersects each $\bm{b}_j \in \cB_j$. Since boxes have Helly number $2$ (\Cref{obs:box_helly}), the hypergraph $\cH$ is complete and $K_{t, \dots, t}^{(d)}$-free, and so there must be some $k \in [d]$ satisfying $n_k < t$. We thus obtain
    \[
    z^*_G(\vn; t) < (n_1 \cdots n_{k-1}) \cdot t \cdot (n_{k+1} \cdots n_d) \le t \cdot \frac{n_1 \cdots n_d}{\min\{n_1, \dots, n_d\}} \le g_t(\vn). 
    \]
    Therefore, the inductive base holds for the constant $M_{0, d} = 1$. 

    Suppose $e(G) = u > 0$ then. The inductive hypothesis shows that there is constant $M_{u-1, d} > 0$ such that $z^*_{G-e}(\vn; t) \le M_{u-1, d} \cdot g_t(\vn)$ holds for any $e \in E(G)$ and any $\vn$. Choose a constant $b > 10$ depending only on $d$ to be sufficiently large so that the followings hold: 
    \begin{equation} \label{eq:b_def}
        \max \Bigl\{ C_d^2 \cdot 2^{d-1} \cdot \tfrac{\log^{2d-2}b}{b}, \, C_d^3 \cdot 2^{d-1} \cdot \tfrac{\log^{3d-3}b}{b} \Bigr\} \le \tfrac{1}{2}. 
    \end{equation}
    Here $C_d > 0$ is the constant given by \Cref{lem:biclique_cover}. We also write $B_d \eqdef C_d \cdot b\log^{d-1} b$ and define
    \begin{equation} \label{eq:M_def}
        M_{u, d} \eqdef \max \Bigl\{ \tfrac{b^2}{t}, \, 27, \, 2(B_d+B_d^2+B_d^3) \cdot M_{u-1,d} \Bigr\}. 
    \end{equation}
    
    We establish $z^*_G(\vn; t) \le M_{u, d} \cdot g_t(\vn)$ via induction on $\vn$, where \Cref{lem:biclique_cover} is applied with this choice of $b$ in the inductive step. Once this is done, the inductive construction of $M_{u, d}$ on $u$ follows. 
    
    For the base, if there exists an entry $n_i$ of $\vn$ with $n_i \le b^2$, then
    \[
    z^*_G(\vn; t) \le (n_1 \cdots n_{i-1}) \cdot b^2 \cdot (n_{i+1} \cdots n_d) \overset{\eqref{eq:M_def}}{\le} (n_1 \cdots n_{i-1}) \cdot tM_{u, d} \cdot (n_{i+1} \cdots n_d) \le M_{u, d} \cdot g_t(\vn). 
    \]
    
    For the inductive step, we assume that every entry of $\vn$ is greater than $b^2$. We will only need the inductive hypothesis that $z^*_G(\lceil\frac{\vn}{b}\rceil; t)\le M_{u, d} \cdot g_t(\lceil\frac{\vn}{b}\rceil)$ and $z^*_G(\lceil\frac{\vn}{b^2}\rceil; t) \le M_{u, d} \cdot g_t(\lceil\frac{\vn}{b^2}\rceil)$ hold, where $\lceil\frac{\vn}{\lambda}\rceil \eqdef (\lceil\frac{n_1}{\lambda}\rceil, \dots, \lceil\frac{n_d}{\lambda}\rceil)$ for every $d$-size-vector $\vn = (n_1, \dots, n_d)$ and every $\lambda \in \{b, b^2\}$. Since $\lceil\tfrac{\vn}{\lambda}\rceil$ is entry-wise smaller than $\vn$ and $g_t(\lceil\tfrac{\vn}{\lambda}\rceil) \le g_t(\tfrac{2}{\lambda} \cdot \vn) = \frac{2^{d-1}}{\lambda^{d-1}} \cdot g_t(\vn)$, the inductive hypothesis shows that
    \begin{align}
        z^*_G(\lceil\tfrac{\vn}{b}\rceil; t) &\le M_{u, d} \cdot g_t(\lceil\tfrac{\vn}{b}\rceil) \le \tfrac{2^{d-1}}{b^{d-1}} \cdot M_{u, d} \cdot g_t(\vn), \label{eq:b_ceil} \\
        z^*_G(\lceil\tfrac{\vn}{b^2}\rceil; t) &\le M_{u, d} \cdot g_t(\lceil\tfrac{\vn}{b^2}\rceil) \le \tfrac{2^{d-1}}{b^{2d-2}} \cdot M_{u, d} \cdot g_t(\vn). \label{eq:b2_ceil} 
    \end{align}
    Concerning the structure of $G$ there are three cases: 
    \vspace{-0.5em}
    \begin{itemize}
        \item $G$ is a star (i.e., $G$ is a subgraph of $K_{1, d-1}$); 
        \vspace{-0.5em}
        \item $G$ contains two vertex-disjoint edges; 
        \vspace{-0.5em}
        \item $G$ contains a triangle. 
    \end{itemize}
    \vspace{-0.5em}

    \medskip
    \noindent\textbf{Case 1: $G$ is a star.}
    Then $G$ has an independent set of size $d - 1$, so some $d - 1$ of the $d$ sets $\cB_1, \dots, \cB_d$ intersect completely. Thus, the separated $\vF_d$-family is restricted, and \Cref{prop:upper_restricted} shows
    \[
    z^*_G(\vn; t) \le \wz_d^*(\vn; t) \le 27 g_t(\vn). 
    \]
    Due to the definition \eqref{eq:M_def}, we have $M_{u, d} \ge 27$, and so the inductive proof in this case is complete. 
    
    \medskip
    \noindent\textbf{Case 2: $G$ contains two vertex-disjoint edges.} Assume without loss of generality that $G$ has two edges $e_1 \eqdef \{1, 2\}$ and $e_2 \eqdef \{3, 4\}$. Our goal is to establish the recursive estimate
    \begin{equation} \label{eq:restricted_matching}
        z^*_G(\vn; t) \le B_d \cdot z^*_{G - e_1}(\vn; t) + B_d^2 \cdot z^*_{G - e_2}(\vn; t) + B_d^2 \cdot b^{d-4} \cdot z^*_G(\lceil\tfrac{\vn}{b}\rceil; t). 
    \end{equation} 
    Suppose \eqref{eq:restricted_matching} is true, then the inductive hypothesis on $u$ and \eqref{eq:b_ceil} show that
    \begin{align*}
        z^*_G(\vn; t) &\le B_d \cdot M_{u-1, d} \cdot g_t(\vn) + B_d^2 \cdot M_{u-1, d} \cdot g_t(\vn) + B_d^2 \cdot b^{d-4} \cdot \tfrac{2^{d-1}}{b^{d-1}} \cdot M_{u, d} \cdot g_t(\vn) \\
        &\le \bigl( (B_d+B_d^2) \cdot M_{u-1, d} + C_d^2  \cdot 2^{d-1} \cdot \tfrac{\log^{2d-2} b}{b} \cdot M_{u, d} \bigr) \cdot g_t(\vn) \\
        &\le \bigl( \tfrac{M_{u, d}}{2} + \tfrac{M_{u, d}}{2} \bigr) \cdot g_t(\vn) = M_{u, d} \cdot g_t(\vn), 
    \end{align*} 
    where the last ``$\le$'' follows from the constructions \eqref{eq:b_def} and \eqref{eq:M_def} of our parameters. 
    
    \smallskip
    
    It remains to prove \eqref{eq:restricted_matching}. Recall that $G$ has edges $e_1 = \{1, 2\}, \, e_2 = \{3, 4\}$ (i.e., the intersections between the first and second families, and between the third and fourth families, are incomplete). Suppose $\cH$ is the intersection hypergraph of separated $\vF_d$-family $(\cB_1, \dots, \cB_d)$, where $|\cB_i| = n_i$ for $i \in [d]$, forbidding $K_{t, \dots, t}^{(d)}$. Let $H'$ be the intersection graph of $(\cB_1, \cB_2)$. We may apply \Cref{lem:biclique_cover} to $H'$ and obtain a covering of $H'$ with at most $B_d$ many bicliques and $B_d$ many partial bicliques of sizes $\frac{n_1}{b} \times \frac{n_2}{b}$. Suppose the bicliques are $X'_1, \dots, X'_{\alpha}$ and the partial bicliques are $Y'_1, \dots, Y'_{\beta}$, where $\alpha, \beta \le B_d$. For $i = 1, \dots, \alpha$ and $j = 1, \dots, \beta$, extend subgraphs $X_i'$ and $Y_j'$ of $H'$ to subhypergraphs $\cX_i$ and $\cY_j$ of $\cH'$, where $\cX_i$ (resp.~$\cY_j$) refers to the intersection hypergraph of exactly boxes (vertices) from $X_i'$ (resp.~$Y_j'$) in $\cB_1 \cup \cB_2$ and every box in $\cB_3 \cup \dots \cup \cB_d$. By construction, 
    \vspace{-0.5em}
    \begin{itemize}
        \item the subhypergraphs $\cX_1, \dots, \cX_{\alpha}, \cY_1, \dots, \cY_{\beta}$ cover $\cH$, and 
        \vspace{-0.5em}
        \item the auxiliary graph of each $\cX_i$ is a subgraph of $G - e_1$, and 
        \vspace{-0.5em}
        \item the size-vector of each $\cY_j$ is entry-wise less than or equal to $(\lfloor \tfrac{n_1}{b} \rfloor, \lfloor \tfrac{n_2}{b} \rfloor, n_3, \dots, n_d)$. 
    \end{itemize}
    \vspace{-0.5em}
    It thus follows that
    \begin{equation} \label{eq:matching_est1}
        z^*_G(\vn; t) \le B_d \cdot z^*_{G-e_1}(\vn; t) + B_d \cdot z^*_G(\lfloor\tfrac{n_1}{b}\rfloor, \lfloor\tfrac{n_2}{b}\rfloor, n_3, \dots, n_d; t). 
    \end{equation}
    Similarly, if we apply \Cref{lem:biclique_cover} again to cover the intersection graph between the third and fourth families by $B_d$ bicliques and $B_d$ partial bicliques of size $\frac{n_1}{b} \times \frac{n_2}{b}$, then we obtain
    \begin{equation} \label{eq:matching_est2}
        z^*_G(\lfloor\tfrac{n_1}{b}\rfloor, \lfloor\tfrac{n_2}{b}\rfloor, n_3, \dots, n_d; t) \le B_d \cdot z^*_{G-e_2}(\vn; t) + B_d \cdot z^*_G(\lfloor\tfrac{n_1}{b}\rfloor, \lfloor\tfrac{n_2}{b}\rfloor,\lfloor\tfrac{n_3}{b}\rfloor,\lfloor\tfrac{n_4}{b}\rfloor, n_5, \dots, n_d; t). 
    \end{equation}
    By partitioning each of the fifth through the $d$-th families evenly into $b$ subfamilies, we deduce that
    \begin{align} 
        z^*_G(\lfloor\tfrac{n_1}{b}\rfloor, \lfloor\tfrac{n_2}{b}\rfloor,\lfloor\tfrac{n_3}{b}\rfloor,\lfloor\tfrac{n_4}{b}\rfloor, n_5, \dots, n_d; t) &\le b^{d-4} \cdot z^*_G(\lceil\tfrac{n_1}{b}\rceil, \lceil\tfrac{n_2}{b}\rceil,\lceil\tfrac{n_3}{b}\rceil,\lceil\tfrac{n_4}{b}\rceil, \lceil\tfrac{n_5}{b}\rceil, \dots, \lceil\tfrac{n_d}{b}\rceil; t)\nonumber\\
        &= b^{d-4} \cdot z^*_G(\lceil\tfrac{\vn}{b}\rceil; t). \label{eq:matching_est3}
    \end{align}
    Finally, \eqref{eq:restricted_matching} follows from a combination of \eqref{eq:matching_est1}, \eqref{eq:matching_est2}, and \eqref{eq:matching_est3}. 
    
    \medskip
    \noindent\textbf{Case 3: $G$ contains a triangle.} Assume without loss of generality that $e_1 \eqdef \{1, 2\}, \, e_2 \eqdef \{2, 3\}$, and $e_3 \eqdef \{3, 1\}$ form a triangle in $G$. Our goal is to establish the recursive estimate
    \begin{equation} \label{eq:restricted_triangle}
        z^*_G(\vn; t) \le B_d \cdot z^*_{G - e_1}(\vn; t) + B_d^2 \cdot z^*_{G - e_2}(\vn; t) + B_d^3 \cdot z^*_{G - e_3}(\vn; t) + B_d^3 \cdot b^{2d-6} \cdot z^*_G(\lceil\tfrac{\vn}{b^2}\rceil; t).
    \end{equation}
    Suppose \eqref{eq:restricted_triangle} is true, then the induction hypothesis on $u$ and \eqref{eq:b2_ceil} show that
    \begin{align*}
        z^*_G(\vn; t) &\le B_d \cdot M_{u-1, d} \cdot g_t(\vn) + B_d^2 \cdot M_{u-1, d} \cdot g_t(\vn) + B_d^3 \cdot M_{u-1, d} \cdot g_t(\vn) \\
        &\qquad + B_d^3 \cdot b^{2d-6} \cdot \tfrac{2^{d-1}}{b^{2d-2}} \cdot M_{u, d} \cdot g_t(\vn) \\
        &\le \bigl( (B_d+B_d^2+B_d^3) \cdot M_{u-1, d} + C_d^3 \cdot 2^{d-1} \cdot \tfrac{\log^{3d-3} b}{b} \cdot M_{u, d} \bigr) \cdot g_t(\vn) \\
        &\le \bigl( \tfrac{M_{u, d}}{2} + \tfrac{M_{u, d}}{2} \bigr) \cdot g_t(\vn) = M_{u, d} \cdot g_t(\vn), 
    \end{align*} 
    where the last ``$\le$'' follows from the constructions \eqref{eq:b_def} and \eqref{eq:M_def} of our parameters. 
    
    \smallskip
    
    It remains to prove \eqref{eq:restricted_triangle}. Recall that $G$ contains a triangle $e_1 = \{1, 2\}, \, e_2 = \{2, 3\}, \, e_3 = \{3, 1\}$. Similarly as in the previous analysis, by \Cref{lem:biclique_cover} applied to the intersection graph between the first and second families, the second and third families, and the third and first families, we obtain
    \begin{align}
        z^*_G(\vn; t) &\le B_d \cdot z^*_{G-e_1}(\vn; t) + B_d \cdot z^*_G( \lfloor \tfrac{n_1}{b} \rfloor, \lfloor \tfrac{n_2}{b} \rfloor, n_3, n_4, \dots, n_d; t), \label{eq:triangle_est1} \\
        z^*_G( \lfloor \tfrac{n_1}{b} \rfloor, \lfloor \tfrac{n_2}{b} \rfloor, n_3, n_4, \dots, n_d; t) &\le B_d \cdot z^*_{G-e_2}(\vn; t) + B_d \cdot z^*_G( \lfloor \tfrac{n_1}{b} \rfloor, \lfloor \tfrac{n_2}{b^2} \rfloor, \lfloor \tfrac{n_3}{b} \rfloor, n_4, \dots, n_d; t), \label{eq:triangle_est2} \\
        z^*_G( \lfloor \tfrac{n_1}{b} \rfloor, \lfloor \tfrac{n_2}{b^2} \rfloor, \lfloor \tfrac{n_3}{b} \rfloor, n_4, \dots, n_d; t) &\le B_d \cdot z^*_{G-e_3}(\vn; t) + B_d \cdot z^*_G( \lfloor \tfrac{n_1}{b^2} \rfloor, \lfloor \tfrac{n_2}{b^2} \rfloor, \lfloor \tfrac{n_3}{b^2} \rfloor, n_4, \dots, n_d; t). \label{eq:triangle_est3}
    \end{align}
    By partitioning each of the fourth through the $d$-th family evenly into $b^2$ subfamilies, we deduce that
    \begin{align}
        z^*_G( \lfloor \tfrac{n_1}{b^2} \rfloor, \lfloor \tfrac{n_2}{b^2} \rfloor, \lfloor \tfrac{n_3}{b^2}\rfloor, n_4, \dots, n_d; t) &\le (b^2)^{d-3} \cdot z^*_G( \lceil \tfrac{n_1}{b^2} \rceil, \lceil \tfrac{n_2}{b^2} \rceil, \lceil \tfrac{n_3}{b^2} \rceil, \lceil \tfrac{n_4}{b^2} \rceil, \dots, \lceil \tfrac{n_d}{b^2} \rceil; t) \nonumber \\
        &= b^{2d-6} \cdot z^*_G( \lceil \tfrac{\vn}{b^2} \rceil; t). 
        \label{eq:triangle_est4}
    \end{align} 
    Finally, \eqref{eq:restricted_triangle} follows from a combination of \eqref{eq:triangle_est1}, \eqref{eq:triangle_est2}, \eqref{eq:triangle_est3}, and \eqref{eq:triangle_est4}. 
    
    \medskip
    The induction on $\vn$ is done in each case, and hence the proof of \Cref{prop:upper_separated} is complete. 
\end{proof}

\subsection{From the separated case to the canonical case} \label{sec:upper_canonical}

Thirdly, we prove \Cref{thm:upper_general} in the canonical case. 

\begin{proposition} \label{prop:upper_canonical}
    Suppose $d \ge 2$ and $t \ge 2$. Then $z_d(\vn; t) = z_d^*(\vn; t) \le M_d \cdot g_t(\vn)$ holds for every $d$-size-vector $\vn$, where the constant $M_d > 0$ is given by \Cref{prop:upper_separated}.
    \vspace{-0.5em}
\end{proposition}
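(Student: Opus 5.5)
The inequality $z_d^*(\vn;t) \le z_d(\vn;t)$ holds trivially, since every separated family is canonical. By \Cref{prop:upper_separated}, it therefore suffices to show $z_d(\vn;t) \le z_d^*(\vn;t)$. Concretely, given any canonical family $(\cB_1,\dots,\cB_d)$ with $|\cB_i| = n_i$ whose intersection hypergraph $\cH$ is $K^{(d)}_{t,\dots,t}$-free, we build a separated family with the same sizes whose intersection hypergraph contains $\cH$ as a subhypergraph (after identifying vertices). We also need it to remain $K^{(d)}_{t,\dots,t}$-free, and it is simplest to aim for the two hypergraphs to be exactly equal.

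The construction is a perturbation, one coordinate at a time. Fix $i$. The boxes of $\cB_i$ have the form $\bm{b} = I_1\times\dots\times\{c_{\bm{b}}\}\times\dots\times I_d$, with the point in the $i$-th slot. Group them by the value $c$. Within a group sharing the same $c$, give the boxes distinct values $c + \delta_{\bm{b}}$ with tiny $\delta_{\bm{b}}$. In exchange, fatten every box of the \emph{other} families $\cB_j$ ($j\ne i$) slightly in the $i$-th coordinate, whenever its $i$-th interval has an endpoint equal to $c$, so that it still reaches $c+\delta_{\bm{b}}$. Since $i \in F_j$ for every $j\neq i$, these intervals are nondegenerate and can be extended by $\max|\delta|$ at that endpoint without changing direction sets.

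The key check is that the intersection hypergraph is unchanged. A $d$-tuple intersects if and only if, for each coordinate $k$, the $k$-th projections have a common point. In coordinate $k=i$, the $i$-th projections are one point $c_{\bm{b}_i}$ together with $d-1$ closed intervals. Replacing the point by $c_{\bm{b}_i}+\delta_{\bm{b}_i}$ and stretching the intervals that end at $c_{\bm{b}_i}$ by more than $|\delta_{\bm{b}_i}|$ preserves the relation ``the point lies in every interval''. It creates no new incidences provided the perturbation is smaller than half the minimum gap between distinct values among all endpoints and points in coordinate $i$: any interval not containing $c$ misses $c$ by more than that gap, both before and after the stretch. Projections to other coordinates are affected only through the stretching in coordinate $i$, which does not touch them. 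By Helly (\Cref{obs:box_helly}) this coordinatewise check suffices, since intersection of boxes is the conjunction over coordinates of one-dimensional intersection.

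Doing this for $i=1,\dots,d$ in turn, each time with perturbations smaller than the current minimal gap, yields a separated family with identical hypergraph. Hence $z_d(\vn;t)\le z_d^*(\vn;t)\le M_d\, g_t(\vn)$. The only delicate point, which I expect to be the main obstacle, is the bookkeeping of the stretch. One should stretch an interval of $\cB_j$ only at endpoints equal to some shared value $c$, and only on the side needed; with distinct signs of $\delta$ both sides may be needed, which is harmless. The stretch amount must also be small enough not to reach any other coordinate value. Choosing all $|\delta|$ less than one third of the minimum positive gap handles this.
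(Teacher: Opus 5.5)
Your proposal is correct and follows essentially the same route as the paper: slightly enlarge boxes in their nondegenerate directions, then shift each box of $\cB_i$ by a tiny distinct amount in its collapsed $i$-th coordinate, so that the intersection hypergraph is unchanged and the family becomes separated. Your coordinate-by-coordinate version with an explicit gap bound is a more quantitative form of the paper's uniform $\veps$-enlargement followed by shifts $\delta \ll \veps$; note also that the coordinatewise reduction you rely on is just the product structure of boxes rather than Helly.
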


To prove \Cref{prop:upper_canonical}, it suffices to show that we can perturb the boxes while preserving the intersection hypergraph, so that the perturbed family becomes separated. To this end, we slightly enlarge each box to create some space for small perturbations of the hyperplanes. 

\begin{proof}[Proof of \Cref{prop:upper_canonical}]
    Fix an arbitrary canonical $\vF_d$-family $(\cB_1, \dots, \cB_d)$. For $\bm{b}_1 \in \cB_1, \dots, \bm{b}_d \in \cB_d$, if they intersect, then the $d$-direction-vector $\vF_d = \bigl( [d] \setminus \{1\}, \dots, [d] \setminus \{d\} \bigr)$ shows that the intersection must be a single point. For each $i \in [d]$ and any $\bm{b} \in \cB_i$, we consider the following enlargement of $\bm{b}$.
    \[
    \begin{array}{ccccccccccccccc}
        \bm{b} &\eqdef &{\scriptstyle [a_1, b_1]} &{\scriptstyle \times} &{\scriptstyle \cdots} &{\scriptstyle \times} &{\scriptstyle [a_{i-1}, b_{i-1}]} &{\scriptstyle \times} &{\scriptstyle \{a_i\}} &{\scriptstyle \times} &{\scriptstyle [a_{i+1}, b_{i+1}]} &{\scriptstyle \times} &{\scriptstyle \cdots} &{\scriptstyle \times} &{\scriptstyle [a_d, b_d]} \\[-1.25em]
        \rotatebox{-90}{$\mapsto$} & & & & & & & & & & & & & & \\[0.5em]
        \bm{b}^{\veps, i} &\eqdef &{\scriptstyle [a_1-\veps, b_1+\veps]} &{\scriptstyle \times} &{\scriptstyle \cdots} &{\scriptstyle \times} &{\scriptstyle [a_{i-1}-\veps, b_{i-1}+\veps]} &{\scriptstyle \times} &{\scriptstyle \{a_i\}} &{\scriptstyle \times} &{\scriptstyle [a_{i+1}-\veps, b_{i+1}+\veps]} &{\scriptstyle \times} &{\scriptstyle \cdots} &{\scriptstyle \times} &{\scriptstyle [a_d-\veps, b_d+\veps]}. 
    \end{array}
    \]
    Intersections are preserved under the enlargement. Moreover, if $\bm{b}_1 \in \cB_1, \dots, \bm{b}_d \in \cB_d$ do not intersect, then they still do not intersect after enlargement, given $\veps > 0$ is small enough. Thus, the intersection hypergraph of $(\cB_1^{\veps, 1}, \dots, \cB_d^{\veps, d})$ coincides with that of $(\cB_1, \dots, \cB_d)$, where $\cB_i^{\veps, i} \eqdef \{\bm{b}^{\veps, i} \mid \bm{b} \in \cB_i\}$. 

    After the enlargement, every intersection point $\bm{b}^{\veps, 1} \cap \dots \cap \bm{b}^{\veps, d} \, (\bm{b}^{\veps, i} \in \cB_i^{\veps, i})$ lies in the interior of the box $\bm{b}^{\veps, i}$ for each $i \in [d]$. So, we may shift each box $\bm{b} \in \cB_i^{\veps, i}$ along the $i$-th direction by a segment of length $\delta_{\bm{b}} \in \mathbb{R}$, which is far smaller than $\veps$, to separate all the boxes. Specifically, 
    \vspace{-0.5em}
    \begin{itemize}
        \item if $\bm{b}_1 \cap \dots \cap \bm{b}_d = \{p\}$, then $p + \delta_{\bm{b}_1} \vec{e}_1 + \dots + \delta_{\bm{b}_d} \vec{e}_d$ is the intersecting point of these boxes after shifting, where $\vec{e}_i$ denotes the $i$-th unit coordinate vector of $\R^d$ for $i \in [d]$; 
        \vspace{-0.5em}
        \item if $\bm{b}_1 \cap \dots \cap \bm{b}_d = \varnothing$, then after the shifting we still have $(\bm{b}_1 + \delta_{\bm{b}_1} \vec{e}_1) \cap \dots \cap (\bm{b}_d + \delta_{\bm{b}_d} \vec{e}_d) = \varnothing$, because all the boxes that we are considering in $\R^d$ are closed. 
    \end{itemize}
    \vspace{-0.5em}
    Therefore, we have separated the family $(\cB_1^{\veps,1}, \dots, \cB_d^{\veps,d})$ while preserving the intersection hypergraph. 

    We thus obtain $z_d(\vn; t) = z_d^*(\vn; t) \le M_d \cdot g_t(\vn)$, where ``$\le$'' follows from \Cref{prop:upper_separated}. 
\end{proof}

\subsection{From the canonical case to the general case} \label{sec:upper_general}

\begin{proof}[Proof of \Cref{thm:upper_general}]
    We proceed by induction on $d$. Let $\Lambda_r \eqdef \max\{4^{r-1}, M_r\}$, where $M_r > 0$ is the constant given by \Cref{prop:upper_separated}. Our goal is to prove \Cref{thm:upper_general} with this choice of $\Lambda_r$. 
    
    The base $d = 1$ case is shown by the following statement.

    \smallskip
    
    \begin{lemma} \label{lem:upper_onedim}
        Let $r \ge 2, \, t \ge 2$, and $\vF$ be an $r$-direction-vector in $\R^1$. We have $z_1^{\vF}(\vn; t) < 4^{r-1} \cdot g_t(\vn)$ for every $r$-size-vector $\vn$. 
    \end{lemma}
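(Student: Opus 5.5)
In $\R^1$ every box is either a point (direction set $\varnothing$) or a nondegenerate interval (direction set $\{1\}$). Points are degenerate intervals, so it suffices to prove the bound for $r$ families of closed intervals, arbitrary lengths allowed. By \Cref{obs:box_helly}, an $r$-tuple $(\bm{b}_1,\dots,\bm{b}_r)$ is a hyperedge if and only if $\max_j a_j \le \min_j b_j$, where $\bm{b}_j=[a_j,b_j]$. The plan is a charging argument on the left endpoints. For each hyperedge, let $j^*$ be the index attaining the largest left endpoint, breaking ties by a fixed rule (say smallest $j$, then a fixed order within the multiset). The \emph{leader} is $\bm{b}_{j^*}$, and the common point is its left endpoint $a_{j^*}$.

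Fix a leader $\bm{c}\in\cB_k$ with left endpoint $a$. Every hyperedge it leads consists of $\bm{c}$ together with, for each $j\ne k$, a box $\bm{b}_j\in\cB_j$ containing $a$ whose left endpoint is at most $a$ (with the tie rule respected). Let $S_j(\bm{c})$ be the set of such candidates in $\cB_j$. The hyperedges led by $\bm{c}$ are then exactly $\prod_{j\ne k}S_j(\bm{c})$: every box in every $S_j$ contains $a$ and starts no later than $a$, so every such tuple intersects at $a$ and has $\bm{c}$ as its leader.

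The key step uses $K_{t,\dots,t}^{(r)}$-freeness to show that $\min_{j\ne k}|S_j(\bm{c})|$ is small relative to the leaders. If $|S_j(\bm{c})|\ge t$ for all $j\ne k$, the only remaining question is whether there are $t$ boxes in $\cB_k$ containing $a$. So order the boxes of $\cB_k$ by left endpoint. Among the leaders in $\cB_k$ whose candidate sets all have size at least $t$, consider the leader $\bm{c}$ together with the $t-1$ preceding such leaders in $\cB_k$ that still contain the point $a$. I expect this to be the main obstacle, and the cleaner route is a different charging, run by induction on $r$. For a hyperedge with leader in $\cB_k$, charge it to the pair ($\bm{c}$, the $(r-2)$-tuple obtained by deleting $\bm{c}$ and the box from the family $j$ with the smallest $|S_j(\bm{c})|$). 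If that smallest candidate set has size less than $t$, the number of such hyperedges is at most $t\prod_{j\ne k,j_0}n_j$ summed over $\bm{c}$, which gives $t\cdot n_1\cdots n_r/n_{j_0}$-type terms, i.e.\ terms of $g_t$.

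The remaining case is a leader $\bm{c}$ with all $|S_j(\bm{c})|\ge t$; these leaders must be handled by induction. Let $L_k$ be the set of such leaders in $\cB_k$. If $t$ of them share a common point, then together with $t$ candidates from each other family (chosen at the largest of their left endpoints) they form a $K_{t,\dots,t}$, a contradiction. So the leaders in $L_k$ have ply less than $t$. The plan is then to shrink each leader to its left endpoint, which turns it into a point. Each remaining hyperedge is then an incidence between the point $a(\bm{c})$ and an $(r-1)$-tuple of intervals from the other families that pairwise intersect and all contain $a$. Group these hyperedges by leader and apply induction on $r$ to the families $\cB_j$ for $j\ne k$, restricted to boxes containing $a$. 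The hard part is making this restriction $K_{t,\dots,t}^{(r-1)}$-free; since the ply bound only gives fewer than $t$ leaders at any point, I would instead bound the total directly. For each other family $j$, each box $\bm{b}\in\cB_j$ is counted as a candidate by leaders in $L_k$ whose left endpoints lie in $\bm{b}$. Because these leaders have ply less than $t$, at most a bounded number of them have mutually overlapping extents, and a telescoping comparison over the sorted left endpoints gives the needed factor. I expect to lose a factor of $4$ per family through the tie-breaking and the two sides (left and right) of each interval. That loss is where the constant $4^{r-1}$ comes from, and it is what I would verify first in the case $r=2$, where the bound should reduce to the classical $O(t(m+n))$ interval estimate.
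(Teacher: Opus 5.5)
Your reduction to closed intervals, the leader decomposition, the product formula for the hyperedges led by $\bm{c}$, the treatment of leaders with some $|S_j(\bm{c})|<t$, and the ply observation for $L_k$ are all correct. The gap is the remaining case, and that case is essentially the whole lemma. You never actually bound $\sum_{\bm{c}\in L_k}\prod_{j\ne k}|S_j(\bm{c})|$, and the mechanism you sketch cannot do it.

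Ply less than $t$ says nothing about how many leaders have their left endpoint inside a fixed $\bm{b}\in\cB_j$. Pairwise disjoint short leaders can all lie inside one long box, and one long box in each family $j\ne k$ can be a candidate for every leader in $L_k$. So no per-family, per-box count controls a product over families: $K_{t,\dots,t}^{(r)}$-freeness must be used jointly across all $j\ne k$. Already for $r=2$, your remaining case is exactly the point--interval Zarankiewicz bound (left endpoints of $\cB_k$ against the intervals of the other family). You assume that bound rather than prove it, and the constant $4^{r-1}$ is guessed, not derived.

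The missing idea is the block argument of \Cref{prop:point_segment}. Sort the points and cut them into blocks of consecutive points. By $K$-freeness, fewer than $t$ intervals cover the hull of a full block. Every other interval meeting the block has an endpoint in a half-open version of that hull, and these half-open hulls are disjoint across blocks.

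The paper proves this for an unbalanced $K_{s,t}$ hypothesis, getting $mt+3ns$. It lifts this to two families of segments by double counting endpoint incidences (\Cref{prop:segment_segment}), giving $4(mt+ns)$. It then inducts on $r$ by replacing $\cB_1,\dots,\cB_{r-1}$ with the multiset of their nonempty $(r-1)$-wise intersections. Against $\cB_r$ this family is $K_{T,t}$-free with $T=4^{r-2}g_t(n_1,\dots,n_{r-1})$, by the induction hypothesis and Helly. This use of a huge $s=T$ is exactly what your abandoned induction lacked.

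Alternatively, your leader framework can be completed directly:
\begin{enumerate}
\item Sort the boxes of $\cB_k$ by left endpoint and cut them into blocks of $t$ consecutive ones.
\item By $K_{t,\dots,t}^{(r)}$-freeness, some $j'\ne k$ has fewer than $t$ boxes covering the hull of a full block's left endpoints. So hyperedges using only covering boxes number at most $t(t-1)\prod_{j\ne k,j'}n_j$ per block.
\item Any hyperedge led from the block that uses a non-covering box $\bm{b}_j$ is charged to $\bm{b}_j$, and each box is charged by at most two blocks.
\item The last, partial block is bounded trivially.
\end{enumerate}
Summing gives a bound below $(3r-2)\,g_t(\vn)\le 4^{r-1}g_t(\vn)$.
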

    
    The balanced case of the lemma, where all entries of $\vn$ are equal, follows from \cite[Proposition 8]{chan_keller_smorodinsky}. Since we require the unbalanced version for our reductive steps and the generalization is relatively straightforward, we defer the proof of \Cref{lem:upper_onedim} to \Cref{append:upper_onedim}. 

    \smallskip
    
    For the inductive step, let $\vn = (n_1, \dots, n_r)$ be an $r$-size-vector and $(\cB_1, \dots, \cB_r)$ be an $\vF$-family with $|\cB_j| = n_j \, (\forall j \in [r])$, where $\vF = (F_1, \dots, F_r)$ is a non-2-coherent $r$-direction-vector in $\mathbb{R}^d$. Given that the incidence graph $\cH$ of $(\cB_1, \dots, \cB_r)$ is $K_{t, \dots, t}^{(r)}$-free, we bound $e(\cH)$ in two cases. The first case will rely on the inductive hypothesis, while the second will follow from \Cref{prop:upper_canonical}. 

    \medskip
    
    \noindent\textbf{Case 1: there exist $i \in [d]$ and distinct $j_1, j_2 \in [r]$ such that $i \notin F_{j_1} \cup F_{j_2}$.} In this case, we may assume without loss of generality that $i = d$ and $(j_1, j_2) = (1, 2)$. Let $S$ be the collection of all reals $s$ such that the hyperplane $\Gamma_s\eqdef \{x_d = s\}$ contains at least one box $\bm{b} \in \cB_1 \cup \cB_2$. We shall apply the inductive hypothesis on each such hyperplane. 
    
    Write $F'_i \eqdef F_i \setminus \{d\}$ for $i = 1, \dots, r$ and $\vF' \eqdef (F'_1, \dots, F'_r)$. For each $s \in S$ and any $i \in [r]$, define
    \[
    \cB^{(s)}_i \eqdef \bigl\{ \bm{b} \cap \Gamma_s : \bm{b} \in \cB_i, \, \bm{b} \cap \Gamma_s \ne \varnothing \bigr\}. 
    \]
    Then $\bigl( \cB^{(s)}_1, \dots, \cB^{(s)}_r \bigr)$ is an $\vF'$-family in $\Gamma_s \cong \R^{d-1}$. Write $n_1^{(s)} \eqdef \bigl| \cB_1^{(s)} \bigr|, \, n_2^{(s)} \eqdef \bigl| \cB_2^{(s)} \bigr|$. It follows that
    \[
    \text{$\sum_{s \in S} n_1^{(s)} = n_1, \ \sum_{s \in S} n_2^{(s)} = n_2$, \, and $\bigl| \cB^{(s)}_i \bigr| \le n_i$ holds for $i = 3, 4, \dots, r$ and every $s \in S$}. 
    \]
    Let $\cH^{(s)}$ be the incidence hypergraph of $\bigl( \cB^{(s)}_1, \dots, \cB^{(s)}_r \bigr)$. Then $\cH^{(s)}$ is a subgraph of $\cH$, and hence it is again $K_{t, \dots, t}^{(r)}$-free. Since these subgraphs $\cH^{(s)} \, (s \in S)$ cover $\cH$, we obtain $e(\cH) \le \sum_{s \in S} e \bigl( \cH^{(s)} \bigr)$. 
    
    We first verify that the induction hypothesis is applicable to $\cH^{(s)}$. 
    
    \begin{claim}
        The direction vector $\vF'$ is non-2-coherent.
    \end{claim}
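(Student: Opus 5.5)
The claim is immediate from the definitions once we note that removing a coordinate from every direction set can only shrink intersections. I will argue by contrapositive: assume $\vF'$ is $2$-coherent and deduce that $\vF$ is $2$-coherent, which contradicts the standing hypothesis of \Cref{thm:upper_general}.

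Suppose there is some $k \in [r]$ with $\bigl|\bigcap_{j \in [r]\setminus\{k\}} F'_j\bigr| \ge 2$. Since $F'_j = F_j \setminus \{d\} \subseteq F_j$ for every $j$, we have
\[
\bigcap_{j \in [r]\setminus\{k\}} F'_j \;=\; \Bigl(\bigcap_{j \in [r]\setminus\{k\}} F_j\Bigr) \setminus \{d\} \;\subseteq\; \bigcap_{j \in [r]\setminus\{k\}} F_j .
\]
Hence $\bigl|\bigcap_{j \in [r]\setminus\{k\}} F_j\bigr| \ge 2$, so $\vF$ is $2$-coherent. This contradiction shows that $\vF'$ is non-$2$-coherent.

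Note that the Case~1 hypothesis ($d \notin F_1 \cup F_2$) is not needed for the claim itself. It is used elsewhere, namely to guarantee that each box of $\cB_1 \cup \cB_2$ lies in exactly one hyperplane $\Gamma_s$, which gives $\sum_s n_1^{(s)} = n_1$. One should also check that $\vF'$ is a legitimate $r$-direction-vector in $\R^{d-1}$: this holds since each $F'_j \subseteq [d-1]$, and slicing a box by $\Gamma_s$ either empties it or leaves a box with direction set $F_j \setminus \{d\}$. There is no real obstacle here; the only point requiring care is that the identity above is an inclusion of intersections, which holds trivially.
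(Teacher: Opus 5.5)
Your proof is correct and is essentially the paper's argument: both rest on $F'_j \subseteq F_j$, so $\bigl|\bigcap_{j \in [r]\setminus\{k\}} F'_j\bigr| \le \bigl|\bigcap_{j \in [r]\setminus\{k\}} F_j\bigr| \le 1$ for every $k$. The paper states this inequality directly, while you phrase the same inclusion as a contrapositive.
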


    {\renewcommand{\qedsymbol}{$\blacksquare$}
    \begin{proof}
        For any $k \in [r]$, from $F'_j \subseteq F_j \, (\forall j \in [r])$ we deduce that
        \[
        \Biggl| \bigcap_{j \in [r] \setminus \{k\}} F'_j \Biggr| \le \Biggl| \bigcap_{j \in [r] \setminus \{k\}} F_j \Biggr| \le 1. \qedhere
        \]
    \end{proof}
    }

    So, the inductive hypothesis implies that $e \bigl( \cH^{(s)} \bigr) \le \Lambda_r \cdot g_t \bigl( n_1^{(s)}, n_2^{(s)}, n_3, \dots, n_r \bigr)$. Therefore, 
    \begin{align*}
        e(\cH) &\le \sum_{s \in S} e \bigl( \cH^{(s)} \bigr) \le \Lambda_r \cdot \sum_{s \in S} g_t \bigl( n_1^{(s)}, n_2^{(s)}, n_3, \dots, n_r \bigr) \\
        &= \Lambda_r \cdot t \cdot \sum_{s \in S} n_1^{(s)} n_2^{(s)} n_3 \cdots n_r \cdot \biggl( \frac{1}{n_1^{(s)}} + \frac{1}{n_2^{(s)}} + \frac{1}{n_3} + \dots + \frac{1}{n_r} \biggr) \\
        &= \Lambda_r \cdot t \cdot \biggl( \sum_{s \in S} n_2^{(s)} n_3 \cdots n_r + \sum_{s \in S} n_1^{(s)} n_3 \cdots n_r + \sum_{s \in S} n_1^{(s)} n_2^{(s)} n_3 \cdots n_r \Bigl( \frac{1}{n_3}+\dots+\frac{1}{n_r} \Bigr) \biggr) \\
        &\le \Lambda_r \cdot t \cdot \biggl( n_2 n_3 \cdots n_r + n_1 n_3 \cdots n_r + n_1 n_2 n_3 \cdots n_r \Bigl( \frac{1}{n_3} + \dots + \frac{1}{n_r} \Bigr) \biggr) = \Lambda_r \cdot g_t(\vn), 
    \end{align*}
    where at the last ``$\le$'' we used the facts $\sum_{s \in S} n_1^{(s)} = n_1, \, \sum_{s \in S} n_2^{(s)} = n_2$, and $\sum_{s \in S} n_1^{(s)} n_2^{(s)} \le n_1 n_2$. 

    \medskip
    \noindent\textbf{Case 2: for every $i \in [d]$ there is at most one $j \in [r]$ such that $i\notin F_{j}$.} Denote $F_j^{\sc} \eqdef [d] \setminus F_j$ for $j = 1, \dots, r$. In this case, we know that $F_1^{\sc}, \dots, F_r^{\sc}$ are mutually disjoint. Moreover, we have the following equivalent formulation of the non-2-coherent condition on $\vF$:
    \begin{equation} \label{eq:coherent_def}
        \Biggl| \bigcap_{j \in [r] \setminus \{k\}} F_j \Biggr| \le 1 \, (\forall k \in [r]) \iff \Biggl| \bigcup_{j \in [r] \setminus \{k\}} F_j^{\sc} \Biggr| \ge d - 1 \, (\forall k \in [r]). 
    \end{equation}
    From~\eqref{eq:coherent_def} and the mutual disjointness of $F_j^{\sc}$, we see that $|F_j^{\sc}| \le 1$ for all $j \in [r]$. Moreover, $F_1^{\sc}, \dots, F_r^{\sc}$ have to cover $[d]$. Indeed, if not, then
    \vspace{-0.5em}
    \begin{itemize}
        \item either $F_j^{\sc} = \varnothing$ for all $j \in [r]$, which is impossible due to \eqref{eq:coherent_def} and $d \ge 2$, 
        \vspace{-0.5em}
        \item or there exists $k$ with $\abs{F_k^{\sc}} = 1$, yielding a contradiction against \eqref{eq:coherent_def}. 
    \end{itemize}
    \vspace{-0.5em}
    Therefore, without loss of generality, we may assume that \[
    F_j = \begin{cases}
        [d] \setminus \{j\} \qquad &\text{if $j \in \{1, \dots, d\} = [d]$}, \\
        [d] \qquad &\text{if $j \in \{d + 1, \dots, r\} = [r] \setminus [d]$}. 
    \end{cases}
    \]
    
    Now, we ``transfer'' the family $(\cB_1, \dots, \cB_r)$ into $\R^r$ in two steps as follows. 
    \vspace{-0.5em}
    \begin{itemize}
        \item First, embed each $\bm{b} \in \bigcup_{j=1}^r \cB_j$ into $\R^r$ by identifying it with $\bm{b} \times \underbrace{\{0\} \times \dots \times \{0\}}_{r-d}$. 
        \vspace{-0.5em}
        \item Next, for any $\bm{b} \in \cB_j$ with $j > d$, thicken it in the $(d+1)$-th through the $r$-th coordinates, except the $j$-th coordinate, by arbitrary lengths. 
    \end{itemize}
    \vspace{-0.5em}
    This process above yields a canonical family in $\R^r$ with exactly the same intersection hypergraph. Applying \Cref{prop:upper_canonical} to the ``transferred'' family, we obtain $e(\cH) \le M_r \cdot g_t(\vn)$. 

    \medskip
    Combining the above case analysis, the inductive proof of \Cref{thm:upper_general} is complete. 
\end{proof}

\section{Concluding remarks} \label{sec:remark}

Given an $\vF$-family of boxes $(\cB_1, \dots, \cB_r)$ in $\R^d$, we establish in \Cref{thm:main} a sharp dichotomy for $\vF$ that precisely characterizes when a polylogarithmic factor appears in $z_d^{\vF}(n; t)$. In particular, we have
\[
z_d^{\vF}(n; t) = \Omega \bigl( t n^{r-1} \tfrac{\log n}{\log\log n} \bigr)
\]
for every $2$-coherent $\vF$. However, it has been shown~\cite{chan_har-peled,chan_keller_smorodinsky} that $z_d^{\vF}(n; t) = \Theta \bigl( tn (\tfrac{\log n}{ \log\log n})^{d-1} \bigr)$ holds for $\vF = (\varnothing, [d])$ and that $z_d^{\vF}(n; t) = \Theta_r \bigl( tn^{r-1} (\tfrac{\log n}{ \log\log n})^{d-1} \bigr)$ holds for $\vF = \bigl( \underbrace{[d], \dots, [d]}_r \bigr)$. These naturally lead to the following question concerning the correct asymptotics of $z_d^{\vF}(n; t)$ in general. 

\begin{question} \label{exact2co}
    Determine the exact order of magnitude of $z_d^{\vF}(n; t)$ when $\vF$ is $2$-coherent.
\end{question}

To address \Cref{exact2co}, we single out the following two cases as fundamental instances. 

\begin{question}
    Determine the asymptotics of $z_3^{\vF}(n; t)$ for $\vF = (\{1\}, \{2, 3\})$. 
\end{question}

\begin{question}
    Determine the asymptotics of $z_4^{\vF}(n; t)$ for $\vF = (\{1, 2\}, \{3, 4\})$. 
\end{question}

\bibliographystyle{plain}
\bibliography{box_zarankiewicz}

@misc{chan_keller_smorodinsky_arxiv,
  title        = {On {Z}arankiewicz's problem for intersection hypergraphs of geometric objects},
  author       = {T. M. Chan and C. Keller and S. Smorodinsky},
  year         = {2024},
  eprint       = {2412.06490},
  archivePrefix= {arXiv},
  primaryClass = {math.CO},
  url          = {https://arxiv.org/abs/2412.06490},
  doi          = {10.48550/arXiv.2412.06490},
  note         = {\arXiv{2412.06490}}
}

@incollection {chan_keller_smorodinsky,
    AUTHOR = {Chan, T. M. and Keller, C. and Smorodinsky, S.},
     TITLE = {On {Z}arankiewicz's problem for intersection hypergraphs of geometric objects},
 BOOKTITLE = {41st {I}nternational {S}ymposium on {C}omputational {G}eometry},
    SERIES = {LIPIcs. Leibniz Int. Proc. Inform.},
    VOLUME = {332},
     PAGES = {Art. No. 33, 14},
 PUBLISHER = {Schloss Dagstuhl. Leibniz-Zent. Inform., Wadern},
      YEAR = {2025},
      ISBN = {978-3-95977-370-6},
   MRCLASS = {68U05},
  MRNUMBER = {4934386},
       DOI = {10.4230/lipics.socg.2025.33},
       URL = {https://doi.org/10.4230/lipics.socg.2025.33},
}

@incollection {keller_smorodinsky,
    AUTHOR = {Keller, C. and Smorodinsky, S.},
     TITLE = {Zarankiewicz's problem via {$\epsilon$}-$t$-nets},
 BOOKTITLE = {40th {I}nternational {S}ymposium on {C}omputational {G}eometry},
    SERIES = {LIPIcs. Leibniz Int. Proc. Inform.},
    VOLUME = {293},
     PAGES = {Art. No. 66, 15},
 PUBLISHER = {Schloss Dagstuhl. Leibniz-Zent. Inform., Wadern},
      YEAR = {2024},
      ISBN = {978-3-95977-316-4},
   MRCLASS = {68U05},
  MRNUMBER = {4757961},
       DOI = {10.4230/lipics.socg.2024.66},
       URL = {https://doi.org/10.4230/lipics.socg.2024.66},
}

@article {chan_har-peled,
    AUTHOR = {Chan, T. M. and Har-Peled, S.},
     TITLE = {On the number of incidences when avoiding an induced biclique in geometric settings},
   JOURNAL = {Discrete Comput. Geom.},
  FJOURNAL = {Discrete \& Computational Geometry. An International Journal of Mathematics and Computer Science},
    VOLUME = {73},
      YEAR = {2025},
    NUMBER = {2},
     PAGES = {466--489},
      ISSN = {0179-5376,1432-0444},
   MRCLASS = {05D10 (52C10)},
  MRNUMBER = {4865929},
MRREVIEWER = {Jonathan Tidor},
       DOI = {10.1007/s00454-024-00648-8},
       URL = {https://doi.org/10.1007/s00454-024-00648-8},
}

@article{basit_etal,
  author = {Basit, A. and Chernikov, A. and Starchenko, S. and Tao, T. and Tran, C.-M.},
  title = {Zarankiewicz’s problem for semilinear hypergraphs},
  journal = { Forum Math. Sigma},
  volume = {9},
  pages = {No. e59},
  year = {2021}
  }

@article{chazelle,
  author = {Chazelle, B.},
  title = {Lower bounds for orthogonal range searching: I. The reporting case},
  journal = {J. ACM},
  volume = {37},
  number = {2},
  pages = {200-212},
  year = {1990}
  }

@incollection {chalermsook_orgo_zarsav,
    AUTHOR = {Chalermsook, P. and Orgo, L. and Zarsav, M.},
     TITLE = {On geometric bipartite graphs with asymptotically smallest {Z}arankiewicz numbers},
 BOOKTITLE = {33rd {I}nternational {S}ymposium on {G}raph {D}rawing and {N}etwork {V}isualization},
    SERIES = {LIPIcs. Leibniz Int. Proc. Inform.},
    VOLUME = {357},
     PAGES = {Art. No. 21, 24},
 PUBLISHER = {Schloss Dagstuhl. Leibniz-Zent. Inform., Wadern},
      YEAR = {2025},
      ISBN = {978-3-95977-403-1},
   MRCLASS = {68R10},
  MRNUMBER = {4993088},
       DOI = {10.4230/lipics.gd.2025.21},
       URL = {https://doi.org/10.4230/lipics.gd.2025.21},
}

@article {kovari_sos_turan,
    AUTHOR = {K\"ovari, T. and S\'os, V. T. and Tur\'an, P.},
     TITLE = {On a problem of {K}. {Z}arankiewicz},
   JOURNAL = {Colloq. Math.},
  FJOURNAL = {Colloquium Mathematicum},
    VOLUME = {3},
      YEAR = {1954},
     PAGES = {50--57},
      ISSN = {0010-1354,1730-6302},
   MRCLASS = {27.2X},
  MRNUMBER = {65617},
MRREVIEWER = {J.\ Riguet},
       DOI = {10.4064/cm-3-1-50-57},
       URL = {https://doi.org/10.4064/cm-3-1-50-57},
}

@article {bukh2015,
    AUTHOR = {Bukh, B.},
     TITLE = {Random algebraic construction of extremal graphs},
   JOURNAL = {Bull. Lond. Math. Soc.},
  FJOURNAL = {Bulletin of the London Mathematical Society},
    VOLUME = {47},
      YEAR = {2015},
    NUMBER = {6},
     PAGES = {939--945},
      ISSN = {0024-6093,1469-2120},
   MRCLASS = {05C35 (05D99)},
  MRNUMBER = {3431574},
MRREVIEWER = {Lyuben\ R.\ Mutafchiev},
       DOI = {10.1112/blms/bdv062},
       URL = {https://doi.org/10.1112/blms/bdv062},
}

@article{zarankiewicz,
  author  = {Zarankiewicz, K.},
  title   = {Problem 101},
  journal = {Colloquium Mathematicum},
  volume  = {2},
  pages   = {301},
  year    = {1951}
}

@article {erdos1964,
    AUTHOR = {Erd{\H{o}}s, P.},
     TITLE = {On extremal problems of graphs and generalized graphs},
   JOURNAL = {Israel J. Math.},
  FJOURNAL = {Israel Journal of Mathematics},
    VOLUME = {2},
      YEAR = {1964},
     PAGES = {183--190},
      ISSN = {0021-2172},
   MRCLASS = {05.40},
  MRNUMBER = {183654},
MRREVIEWER = {A.\ H.\ Stone},
       DOI = {10.1007/BF02759942},
       URL = {https://doi.org/10.1007/BF02759942},
}

@article {fox_pach_sheffer_suk_zahl,
    AUTHOR = {Fox, J. and Pach, J. and Sheffer, A. and Suk, A. and Zahl, J.},
     TITLE = {A semi-algebraic version of {Z}arankiewicz's problem},
   JOURNAL = {J. Eur. Math. Soc. (JEMS)},
  FJOURNAL = {Journal of the European Mathematical Society (JEMS)},
    VOLUME = {19},
      YEAR = {2017},
    NUMBER = {6},
     PAGES = {1785--1810},
      ISSN = {1435-9855,1435-9863},
   MRCLASS = {05C35 (14P10 52C10 68R10)},
  MRNUMBER = {3646875},
MRREVIEWER = {J.\ C.\ Lagarias},
       DOI = {10.4171/JEMS/705},
       URL = {https://doi.org/10.4171/JEMS/705},
}

@misc{hunter_milojevic_sudakov_tomon,
  author = {Hunter, Z. and Milojevi\'{c}, A. and Sudakov, B. and Tomon, I.},
  title  = {${C}_4$-free subgraphs of high degree with geometric applications},
  year   = {2025},
  note   = {arXiv:2506.23942}
}

@misc{tidor_yu,
  author = {Tidor, J. and Yu, H.-H. H.},
  title  = {Multilevel polynomial partitioning and semialgebraic hypergraphs: regularity, {T}ur\'{a}n, and {Z}arankiewicz results},
  year   = {2024},
  note   = {arXiv:2407.20221}
}

@article {tomon,
    AUTHOR = {Tomon, I.},
     TITLE = {Coloring lines and {D}elaunay graphs with respect to boxes},
   JOURNAL = {Random Structures Algorithms},
  FJOURNAL = {Random Structures \& Algorithms},
    VOLUME = {64},
      YEAR = {2024},
    NUMBER = {3},
     PAGES = {645--662},
      ISSN = {1042-9832,1098-2418},
   MRCLASS = {05C62 (05C15 05D40 52C10)},
  MRNUMBER = {4724810},
MRREVIEWER = {Robert\ Cimikowski},
       DOI = {10.1002/rsa.21193},
       URL = {https://doi.org/10.1002/rsa.21193},
}

@article {szemeredi_trotter,
    AUTHOR = {Szemer\'edi, E. and Trotter, W. T.},
     TITLE = {Extremal problems in discrete geometry},
   JOURNAL = {Combinatorica},
  FJOURNAL = {Combinatorica. An International Journal of the J\'anos Bolyai Mathematical Society},
    VOLUME = {3},
      YEAR = {1983},
    NUMBER = {3-4},
     PAGES = {381--392},
      ISSN = {0209-9683},
   MRCLASS = {52A37 (05B25)},
  MRNUMBER = {729791},
MRREVIEWER = {Thomas\ O.\ Strommer},
       DOI = {10.1007/BF02579194},
       URL = {https://doi.org/10.1007/BF02579194},
}

@article {fox_pach,
    AUTHOR = {Fox, J. and Pach, J.},
     TITLE = {A separator theorem for string graphs and its applications},
   JOURNAL = {Combin. Probab. Comput.},
  FJOURNAL = {Combinatorics, Probability and Computing},
    VOLUME = {19},
      YEAR = {2010},
    NUMBER = {3},
     PAGES = {371--390},
      ISSN = {0963-5483,1469-2163},
   MRCLASS = {05C62 (05C10)},
  MRNUMBER = {2607373},
MRREVIEWER = {G\'eza\ T\'oth},
       DOI = {10.1017/S0963548309990459},
       URL = {https://doi.org/10.1017/S0963548309990459},
}

@article {mustafa_pach,
    AUTHOR = {Mustafa, N. H. and Pach, J.},
     TITLE = {On the {Z}arankiewicz problem for intersection hypergraphs},
   JOURNAL = {J. Combin. Theory Ser. A},
  FJOURNAL = {Journal of Combinatorial Theory. Series A},
    VOLUME = {141},
      YEAR = {2016},
     PAGES = {1--7},
      ISSN = {0097-3165,1096-0899},
   MRCLASS = {05C65 (05C35 05C62 05D40)},
  MRNUMBER = {3479234},
MRREVIEWER = {J\'ozsef\ Balogh},
       DOI = {10.1016/j.jcta.2016.02.001},
       URL = {https://doi.org/10.1016/j.jcta.2016.02.001},
}

@incollection {agarwal_erickson,
    AUTHOR = {Agarwal, P. K. and Erickson, J.},
     TITLE = {Geometric range searching and its relatives},
 BOOKTITLE = {Advances in discrete and computational geometry ({S}outh {H}adley, {MA}, 1996)},
    SERIES = {Contemp. Math.},
    VOLUME = {223},
     PAGES = {1--56},
 PUBLISHER = {Amer. Math. Soc., Providence, RI},
      YEAR = {1999},
      ISBN = {0-8218-0674-2},
   MRCLASS = {68U05},
  MRNUMBER = {1661376},
MRREVIEWER = {Hans-Dietrich\ Hecker},
       DOI = {10.1090/conm/223/03131},
       URL = {https://doi.org/10.1090/conm/223/03131},
}

@inproceedings {chan_zheng,
    AUTHOR = {Chan, T. M. and Zheng, D. W.},
     TITLE = {Hopcroft's problem, log-star shaving, 2{D} fractional cascading, and decision trees},
 BOOKTITLE = {Proceedings of the 2022 {A}nnual {ACM}-{SIAM} {S}ymposium on {D}iscrete {A}lgorithms ({SODA})},
     PAGES = {190--210},
 PUBLISHER = {[Society for Industrial and Applied Mathematics (SIAM)], Philadelphia, PA},
      YEAR = {2022},
      ISBN = {978-1-61197-707-3},
   MRCLASS = {68U05},
  MRNUMBER = {4415047},
       DOI = {10.1137/1.9781611977073.10},
       URL = {https://doi.org/10.1137/1.9781611977073.10},
}

@article {bukh2024,
    AUTHOR = {Bukh, B.},
     TITLE = {Extremal graphs without exponentially small bicliques},
   JOURNAL = {Duke Math. J.},
  FJOURNAL = {Duke Mathematical Journal},
    VOLUME = {173},
      YEAR = {2024},
    NUMBER = {11},
     PAGES = {2039--2062},
      ISSN = {0012-7094,1547-7398},
   MRCLASS = {05D40 (05C55 14N07)},
  MRNUMBER = {4779871},
MRREVIEWER = {Nikolaos\ Fountoulakis},
       DOI = {10.1215/00127094-2023-0043},
       URL = {https://doi.org/10.1215/00127094-2023-0043},
}

@article {kollar_ronyai_szabo,
    AUTHOR = {Koll\'ar, J. and R\'onyai, L. and Szab\'o, T.},
     TITLE = {Norm-graphs and bipartite {T}ur\'an numbers},
   JOURNAL = {Combinatorica},
  FJOURNAL = {Combinatorica. An International Journal on Combinatorics and
              the Theory of Computing},
    VOLUME = {16},
      YEAR = {1996},
    NUMBER = {3},
     PAGES = {399--406},
      ISSN = {0209-9683},
   MRCLASS = {05C35},
  MRNUMBER = {1417348},
MRREVIEWER = {W.\ G.\ Brown},
       DOI = {10.1007/BF01261323},
       URL = {https://doi.org/10.1007/BF01261323},
}

@article {alon_ronyai_szabo,
    AUTHOR = {Alon, N. and R\'onyai, L. and Szab\'o, T.},
     TITLE = {Norm-graphs: variations and applications},
   JOURNAL = {J. Combin. Theory Ser. B},
  FJOURNAL = {Journal of Combinatorial Theory. Series B},
    VOLUME = {76},
      YEAR = {1999},
    NUMBER = {2},
     PAGES = {280--290},
      ISSN = {0095-8956,1096-0902},
   MRCLASS = {05C35 (05C15 05D05)},
  MRNUMBER = {1699238},
MRREVIEWER = {W.\ G.\ Brown},
       DOI = {10.1006/jctb.1999.1906},
       URL = {https://doi.org/10.1006/jctb.1999.1906},
}

@misc{chen_liu_ye,
      title={Extremal constructions for apex partite hypergraphs}, 
      author={Chen, Q. and Liu, H. and Ye, K.},
      year={2025},
      eprint={2510.07997},
      archivePrefix={arXiv},
      primaryClass={math.CO},
      url={https://arxiv.org/abs/2510.07997}, 
      note={ar{X}iv:2510.07997}
}

@article {ma_yuan_zhang,
    AUTHOR = {Ma, J. and Yuan, X. and Zhang, M.},
     TITLE = {Some extremal results on complete degenerate hypergraphs},
   JOURNAL = {J. Combin. Theory Ser. A},
  FJOURNAL = {Journal of Combinatorial Theory. Series A},
    VOLUME = {154},
      YEAR = {2018},
     PAGES = {598--609},
      ISSN = {0097-3165,1096-0899},
   MRCLASS = {05C35 (05C65)},
  MRNUMBER = {3718079},
MRREVIEWER = {Bal\'azs\ Patk\'os},
       DOI = {10.1016/j.jcta.2017.10.002},
       URL = {https://doi.org/10.1016/j.jcta.2017.10.002},
}

\appendix

\section{Proof of \texorpdfstring{\Cref{lem:upper_onedim}}{Lemma 4.7}} \label{append:upper_onedim}

In $\R^1$, there are only two possible direction sets, $\varnothing$ or $\{1\}$. A box family with direction set $\varnothing$ is a collection of points, and a box family with direction set $\{1\}$ is a collection of segments. To prove \Cref{lem:upper_onedim}, we begin with the $r = 2$ case. Firstly, we establish the result for a set of points and a set of segments. Secondly, we establish the result for two sets of segments. From here, we can deduce \Cref{lem:upper_onedim} for any $r$ sets of segments. Finally, we can thicken all the points in $\cB_i$ with direction sets $\varnothing$ and reduce the general case to the case of $r$ sets of segments.

Throughout this appendix, every set refers to a multiset. 

\smallskip

We begin with the case of points and segments, corresponding to $d = 1$ in \cite[Lemma~2.1]{chan_har-peled}. We include a proof here to remove the assumption $m = \operatorname{poly}(n)$ in their statement. 

\begin{proposition} \label{prop:point_segment}
    In $\R^1$, let $\cA$ be a set of $m$ points and $\cB$ be a set of $n$ segments. Denote by $G$ the intersection graph between $\cA$ and $\cB$. If $G$ is $K_{s, t}$-free ($s$ from the $\cA$-side), then $e(G) < mt + 3ns$. 
\end{proposition}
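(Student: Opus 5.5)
The plan is a direct charging argument along the sorted order of the points. First, list the multiset $\cA$ as $a_1, \dots, a_m$ in nondecreasing order of coordinate, breaking ties between equal points arbitrarily. Every segment $\bm{b} \in \cB$ is a closed interval, so the points it contains have indices forming a contiguous run $\{l_{\bm{b}}, l_{\bm{b}}+1, \dots, r_{\bm{b}}\}$ (possibly empty). Ties cause no problem here: points with equal coordinates are either all in $\bm{b}$ or all outside it, so contiguity is preserved.

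Next, split the edges $\{a_i, \bm{b}\}$ of $G$ into two types.
\begin{itemize}
\item An edge is \emph{terminal} if $i > r_{\bm{b}} - s + 1$, i.e.\ $a_i$ is among the last $s-1$ points of the run of $\bm{b}$. Each segment has at most $s-1$ terminal edges, so there are at most $n(s-1)$ of them in total.
\item An edge is \emph{non-terminal} otherwise. In that case the run of $\bm{b}$ contains the whole window $W_i \eqdef \{i, i+1, \dots, i+s-1\}$; in particular $i + s - 1 \le m$.
\end{itemize}

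The key step bounds the non-terminal edges. Fix an index $i$ with $i+s-1 \le m$. Suppose $t$ distinct segments had runs containing $W_i$. Then the $s$ points $a_i, \dots, a_{i+s-1}$ and those $t$ segments would span a $K_{s,t}$ in $G$ with the $s$ vertices on the $\cA$-side, contradicting the hypothesis. Hence for each $i$ at most $t-1$ segments have a non-terminal edge at $a_i$. This gives at most $m(t-1)$ non-terminal edges, and therefore
\[
e(G) \le m(t-1) + n(s-1) < mt + ns \le mt + 3ns .
\]
Here $s, t \ge 1$, and $m, n \ge 1$ are implicit in the setting (if $m = 0$ or $n = 0$ then $G$ has no edges and the bound is immediate).

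I expect no real obstacle. The only points needing care are the handling of multiset and coincident points, which contiguity of runs takes care of, and the boundary indices with $i+s-1>m$, which only ever give terminal edges. This argument appears to give the slightly stronger bound $m(t-1)+n(s-1)$, so the constant $3$ in the statement leaves slack. If contiguity failed for some reason, I would fall back on splitting each run into consecutive blocks of $s$ points, which still yields a bound of the form $mt + O(ns)$.
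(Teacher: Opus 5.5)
Your proof is correct, and it reaches the bound by a different and slightly sharper route than the paper.

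The paper cuts the sorted points into consecutive blocks $\cA_1,\dots,\cA_\ell$ of size $s$. In each full block at most $t-1$ segments cover the hull $I_i=[\min\cA_i,\max\cA_i]$. A segment that meets $\cA_i$ without covering $I_i$ contributes at most $s-1$ edges and is charged to one of its endpoints lying in $I_i$, which gives at most $2n$ such segment--block pairs overall. The leftover block is bounded crudely by $ns$, for a total of $m(t-1)+ns+2n(s-1)<mt+3ns$.

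You replace the fixed blocks by a sliding window. An incidence $\{a_i,\bm{b}\}$ is charged to $\bm{b}$ if $a_i$ is among the last $s-1$ points of the contiguous run of $\bm{b}$. Otherwise it is charged to $a_i$; then $\bm{b}$ contains the whole window $a_i,\dots,a_{i+s-1}$, and $K_{s,t}$-freeness caps these at $t-1$ per index. This gives $e(G)\le m(t-1)+n(s-1)$. It needs no special treatment of a leftover block, and it avoids the endpoint bookkeeping. That bookkeeping needs slight care in the paper: an endpoint of $\bm{b}$ can sit exactly at $\min\cA_i$ or $\max\cA_i$, so it is not in the interior of $I_i$, and one really has to use half-open intervals there.

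The paper's block-and-endpoint scheme is the more generic template: either a segment covers the hull of a block, or it has a boundary point inside it. For this one-dimensional statement, though, your charging is shorter and handles ties and multiplicities cleanly via contiguity of runs. Its better constant would also carry through to smaller constants in \Cref{prop:segment_segment} and \Cref{lem:upper_onedim}.
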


\begin{proof}
    

    Order $m$-point set $\cA \subset \R^1$ as $a_1 \le \dots \le a_m$. Set $\ell \eqdef \lceil \frac{m}{s} \rceil$ and partition $\cA$ into $\cA_1, \dots, \cA_\ell$:
    \[
    \cA_i \eqdef \begin{cases}
        \bigl\{ a_{(i-1)s+1}, a_{(i-1)s+2}, \dots, a_{is} \bigr\} \qquad &\text{if $i \in [\ell-1]$}, \\
        \bigl\{ a_{(\ell-1)s+1}, a_{(\ell-1)s+2}, \dots, a_m \bigr\} \qquad &\text{if $i = \ell$}. 
    \end{cases}
    \]
    Write $\cB_i \eqdef \{\bm{b} \in \cB : \bm{b} \cap \cA_i \ne \varnothing\}$ and denote by $G_i$ the intersection graph between $\cA_i$ and $\cB_i$. From the definitions we deduce that $e(G) \le e(G_1) + \dots + e(G_{\ell})$. 

    It remains to bound $e(G_i)$ for $i = 1, \dots, \ell$. If $i = \ell$, then $e(G_{\ell}) \le ns$. Suppose $i \in [\ell-1]$ then. Since $G$ is $K_{s, t}$-free, the number of segments in $\cB_i$ covering the interval $I_i \eqdef [\min\cA_i, \max\cA_i]$ is at most $t - 1$. Let $n_i$ be the number of segments in $\cB_i$ not covering $I_i$. A crucial observation is that $n_1 + \dots + n_{\ell-1} \le 2n$. Indeed, every segment in $\cB$ has two endpoints, and each endpoint appears in at most one interior of $I_1, \dots, I_{\ell-1}$ because the interiors are disjoint. Since $\ell - 1 \le \frac{m}{s}$, we obtain
    \begin{align*}
        e(G) \le \sum_{i=1}^{\ell} e(G_i) = e(G_{\ell}) + \sum_{i=1}^{\ell-1} e(G_i) &\le ns + \sum_{i=1}^{\ell-1} \bigl( (t-1)s + n_i(s-1) \bigr) \\
        &\le ns + (\ell-1)(t-1)s + 2n(s-1) < mt + 3ns. \qedhere
    \end{align*}
\end{proof}

We then consider two sets of segments. Our proof is inspired by the approach of \cite[Theorem~7]{keller_smorodinsky}, reducing geometric intersections to lower-dimensional boundary intersections. 

\begin{proposition} \label{prop:segment_segment}
    In $\R^1$, let $\cA$ and $\cB$ be sets of $m$ and $n$ segments, respectively. Denote by $G$ the intersection graph between $\cA$ and $\cB$. If $G$ is $K_{s, t}$-free ($s$ from the $\cA$-side), then $e(G) < 4(mt + ns)$. 
\end{proposition}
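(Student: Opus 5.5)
The plan is to reduce segment--segment intersections to point--segment incidences and then invoke \Cref{prop:point_segment}. Two closed segments $\bm{a} = [a^-, a^+] \in \cA$ and $\bm{b} = [b^-, b^+] \in \cB$ intersect if and only if either the left endpoint $b^-$ of $\bm{b}$ lies in $\bm{a}$, or the left endpoint $a^-$ of $\bm{a}$ lies in $\bm{b}$. Indeed, if they intersect and $a^- \le b^-$, then $b^- \le a^+$, so $b^- \in \bm{a}$; symmetrically when $b^- \le a^-$. Hence
\[
E(G) \subseteq E_1 \cup E_2, \qquad E_1 \eqdef \{\{\bm{a},\bm{b}\} : b^- \in \bm{a}\}, \qquad E_2 \eqdef \{\{\bm{a},\bm{b}\} : a^- \in \bm{b}\},
\]
and it suffices to bound $|E_1|$ and $|E_2|$ separately.

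For $E_1$, consider the point multiset $P_{\cB} \eqdef \{b^- : \bm{b} \in \cB\}$ of size $n$, with one point per segment of $\cB$, together with the segment set $\cA$. The bipartite graph $G_1$ formed by the edges $E_1$ is exactly the point--segment intersection graph between $P_{\cB}$ and $\cA$, and it is a subgraph of $G$. Any $K_{t,s}$ in $G_1$ with $t$ points on the point side and $s$ segments from $\cA$ is a $K_{s,t}$ in $G$ with $s$ vertices on the $\cA$-side, so it is forbidden. Applying \Cref{prop:point_segment} with the point side playing the role of ``$\cA$'' and with the parameters swapped (the point side has forbidden size $t$, the segment side has forbidden size $s$) gives $|E_1| < n s + 3 m t$. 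Symmetrically, $E_2$ is the intersection graph between the points $P_{\cA} \eqdef \{a^- : \bm{a} \in \cA\}$ and the segments $\cB$, and it has no $K_{s,t}$ with $s$ points. \Cref{prop:point_segment} then gives $|E_2| < m t + 3 n s$.

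Summing the two bounds,
\[
e(G) \le |E_1| + |E_2| < (ns + 3mt) + (mt + 3ns) = 4(mt + ns),
\]
which is exactly the claimed bound. The only delicate step is the bookkeeping of which side carries $s$ and which carries $t$ when each auxiliary graph is fed into \Cref{prop:point_segment}. The two applications have their roles swapped, and this asymmetry is precisely what makes the constants combine to $4(mt+ns)$. Multiset issues (repeated segments or coincident endpoints) cause no trouble, since the endpoint characterization of intersection holds pointwise for every pair.
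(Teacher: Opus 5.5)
Your proof is correct, but it takes a leaner route than the paper. The paper uses \emph{both} endpoints of every segment. It forms the graph $G_A$ between the $2m$ endpoints $\partial\cA$ and $\cB$, and the graph $G_B$ between $\cA$ and the $2n$ endpoints $\partial\cB$. Since two endpoints can come from the same segment, it needs a pigeonhole step to show that $G_A$ is $K_{2s-1,t}$-free (and $G_B$ is $K_{s,2t-1}$-free). It then double counts: each intersecting pair yields at least two endpoint incidences, each of which determines the pair, so $e(G)\le\frac12\bigl(e(G_A)+e(G_B)\bigr)$.

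You instead use the one-dimensional fact that the larger left endpoint lies in both segments, so a single left-endpoint incidence already witnesses each edge. Because $\bm b\mapsto b^-$ is injective, $K_{s,t}$-freeness transfers with no loss. A plain union bound with your two role-swapped applications of \Cref{prop:point_segment} then suffices.

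Both routes give exactly $4(mt+ns)$. In the paper, applying \Cref{prop:point_segment} to the $2m$ points of $\partial\cA$ actually gives $e(G_A)<2mt+3n(2s-1)$; the displayed $mt$ should be $2mt$. Symmetrically, $e(G_B)<2ns+3m(2t-1)$, and halving the sum yields $4(mt+ns)$.

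The paper's symmetric boundary argument follows the Keller--Smorodinsky template and never has to choose a canonical witness. Yours uses the order structure of $\R^1$ to pick one, which lets you skip both the pigeonhole and the double-counting steps.
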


\begin{proof}
    Let $\partial \cA$ be the multiset of the $2m$ endpoints of $m$ segments in $\cA$, and denote by $G_A$ the intersection graph between $\partial \cA$ and $\cB$. If there were $2s - 1$ points in $\partial \cA$ all of which are contained in a same $t$ segments in $\cB$, then the pigeonhole principle would show that these $2s - 1$ points belong to at least $s$ distinct segments in $\cA$, yielding a copy of $K_{s, t}$ in $G$. This is impossible because $G$ is $K_{s, t}$-free. It follows that $G_A$ is $K_{2s-1, t}$-free, and so \Cref{prop:point_segment} implies that 
    \[
    e(G_A) <  m  t + 3n (2s-1)  < mt + 6ns. 
    \]

    Let $\partial \cB$ be the set of the $2n$ endpoints of $n$ segments in $\cB$. Denote by $G_B$ the intersection graph between $\cA$ and $\partial \cB$. As before, $G_B$ is $K_{s, 2t-1}$-free, and so $e(G_B) < 6mt + ns$. Note that
    \vspace{-0.5em}
    \begin{itemize}
        \item each incidence in $G$ induces at least two incidences in $G_A$ and $G_B$, and
        \vspace{-0.5em}
        \item each incidence in $G_A$ or $G_B$ uniquely determines the incidence in $G$ that induces it. 
    \end{itemize}
    \vspace{-0.5em}
    Therefore, by double counting, we obtain $e(G) \le \frac{e(G_A) + e(G_B)}{2} < 4(mt + ns)$. 
\end{proof}

Next, we consider the case of $r$ sets of segments. Here, we will use the same strategy as in the proof of~\cite[Proposition 8]{chan_keller_smorodinsky}. Recall that $g_t(n_1, \dots, n_r) = t n_1 \cdots n_r \cdot \bigl( \frac{1}{n_1} + \dots + \frac{1}{n_r} \bigr) \in \N_+$. 

\begin{proposition} \label{prop:r_segment}
    Let $r \ge 2$ and $t \ge 2$. In $\R^1$, let $\cB_1, \dots, \cB_r$ be $r$ sets of segments with $|\cB_i| = n_i$ for $i = 1, \dots, r$. Denote by $\cH$ the intersection hypergraph between $\cB_1, \dots, \cB_r$. If $\cH$ is $K_{t, \dots, t}^{(r)}$-free, then the number of hyperedges $e(\cH) < 4^{r-1} \cdot g_t(\vn)$ holds for every $r$-size-vector $\vn$. 
    
    In other words, $z_1^{\vF}(\vn; t) < 4^{r-1} \cdot g_t(\vn)$ holds for any $r$-size-vector $\vn$, where $\vF = \bigl( \underbrace{\{1\}, \dots, \{1\}}_{r} \bigr)$. 
\end{proposition}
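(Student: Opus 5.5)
Induction on $r$, with the base case $r = 2$ given by \Cref{prop:segment_segment} for $s = t$: there $e(G) < 4(n_2 t + n_1 t) = 4\, g_t(n_1, n_2)$.

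For $r \ge 3$, the idea (as in \cite[Proposition 8]{chan_keller_smorodinsky}) is to charge each hyperedge to the family containing the segment with the rightmost left endpoint. In $\R^1$, segments $\bm{b}_1, \dots, \bm{b}_r$ share a point if and only if the maximal left endpoint, attained by some $\bm{b}_k$, is contained in every other $\bm{b}_j$. Break ties by a fixed ordering of families. Then $e(\cH) \le \sum_{k=1}^r e_k$, where $e_k$ counts hyperedges whose maximal left endpoint comes from $\cB_k$.

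To bound $e_k$, replace every $\bm{b} \in \cB_k$ by its left endpoint $\ell(\bm{b})$. A hyperedge counted in $e_k$ is then an $r$-tuple in which the point $\ell(\bm{b}_k)$ lies in all other segments $\bm{b}_j$, $j \ne k$. The other segments pairwise intersect, since they share that point. For fixed $\bm{b}_k$, this is exactly membership in an $(r-1)$-partite structure. The cleanest route is to fuse two families into one. Pick $j \ne k$, and for each pair $(\bm{b}_k, \bm{b}_j)$ with $\ell(\bm{b}_k) \in \bm{b}_j$, form the degenerate segment $\{\ell(\bm{b}_k)\}$, i.e.\ a point. This yields a family $\cB'$ of size at most $n_k n_j$, and the fused hypergraph on $\cB'$ together with the remaining $r-2$ families has $e_k$ edges. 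That fused hypergraph is not obviously $K_t$-free, since a $K_{t,\dots,t}$ there uses $t$ pairs, not $t$ distinct boxes per family. So instead I would fix $\bm{b}_k$ and sum over it.

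For each $\bm{b}_k$, let $p = \ell(\bm{b}_k)$ and let $\cB_j^{p} = \{\bm{b} \in \cB_j : p \in \bm{b}\}$, with sizes $m_j$. Then $e_k = \sum_{\bm{b}_k} \prod_{j \ne k} m_j(\bm{b}_k)$. This is a point–segment incidence count, lifted to products. I would therefore instead induct via \Cref{prop:point_segment} applied to the bipartite graph between points $P_k = \{\ell(\bm{b}) : \bm{b} \in \cB_k\}$ and the hypergraph $\cH_{-k}$ of the other $r-1$ families, viewed as the intersection segments $\bigcap_{j \ne k} \bm{b}_j$. These form a multiset of segments, one per edge of $\cH_{-k}$. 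The $K_{t,\dots,t}^{(r)}$-freeness gives the following: if $t$ points are all contained in the same $T$ intersection segments, and those segments contain a $K_{t,\dots,t}^{(r-1)}$, we get a contradiction. The main obstacle is exactly this step, converting freeness into a $K_{s,T}$-freeness of the point–segment graph with $T$ controlled by $z_1(\cdot; t)$ for $r-1$ families.

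The plan for that step is to take $s = t$ and $T$ minimal such that any $T$ hyperedges of $\cH_{-k}$ sharing a common point contain $K^{(r-1)}_{t,\dots,t}$. This does not work directly, so the fallback is a sweep argument. Sort $P_k$ and group the points into blocks of $t$ consecutive points, as in the proof of \Cref{prop:point_segment}. Tuples of segments from the other families covering a whole block interval $I$ form a complete $(r-1)$-partite hypergraph with at most $t-1$ members in some family, and with the block this would give $K^{(r)}_{t,\dots,t}$. So fewer than $t$ segments of some family cover $I$. The remaining tuples have some segment with an endpoint inside $I$, and each endpoint lies in at most one block interior. Charging these to endpoints, each endpoint's family is replaced by a single point, i.e.\ a degenerate segment, leaving an $(r-1)$-family problem on the other families. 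Both charges are then bounded by induction, each costing a factor about 2, for a total of $4^{r-1} g_t(\vn)$.
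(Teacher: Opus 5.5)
There is a genuine gap in the step you finally rely on. In your sweep, the covering case is fine and needs no induction: if all other segments of a tuple cover the block interval $I$, some family $j_0\ne k$ has at most $t-1$ segments covering $I$, so a full block carries at most $t(t-1)\prod_{i\ne k,j_0}n_i$ tuples. The problem is the endpoint case. After charging to an endpoint of some $\bm{b}_j$, you invoke induction on an ``$(r-1)$-family problem''. But once $\bm{b}_j$ is fixed, the $t$ block members of $\cB_k$ together with the other $r-2$ families need not be $K^{(r-1)}_{t,\dots,t}$-free (for instance, when every segment of those families covers $I$). No contradiction can come from $\cB_j$, since only one of its segments is involved. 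Moreover, $g_t$ at a size-vector with one entry equal to $t$ has terms of order $t^2$, so summing an inductive bound over the charged endpoints would not even be linear in $t$. Your closing sentence (``each costing a factor about $2$, for a total of $4^{r-1}g_t(\vn)$'') is an assertion rather than a computation, and it ignores the outer sum $\sum_k e_k$.

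The repair is to count the endpoint case trivially. A segment $\bm{b}_j$ that contains a point of a block without covering its interval $I$ has its left endpoint in $(\min I,\max I]$ or its right endpoint in $[\min I,\max I)$. These half-open intervals are disjoint across blocks, so there are at most $2n_j$ such (segment, block) pairs, each carrying at most $t\prod_{i\ne j,k}n_i$ tuples. Writing $N=n_1\cdots n_r$ and handling the leftover block, the covering case and the endpoint case in turn, this gives $e_k\le (t-1)N/n_k+(t-1)N/\min_{j\ne k}n_j+2(r-1)tN/n_k$. Summing over $k$ yields $e(\cH)<(3r-2)\,g_t(\vn)\le 4^{r-1}g_t(\vn)$, with no induction at all. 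That would be a different and sharper proof than the paper's, but it is not what you wrote.

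Conversely, the route you abandoned does work, and it is essentially the paper's. Suppose $t$ left endpoints all lie in the same $T$ intersection segments. The boxes occurring in those $T$ hyperedges form sub-families of sizes at most $n_j$ spanning at least $T$ hyperedges. Take $T=4^{r-2}g_t\bigl((n_j)_{j\ne k}\bigr)$; since $g_t$ is monotone, the inductive hypothesis yields a $K^{(r-1)}_{t,\dots,t}$ among them. It does not matter whether it uses only the original $T$ hyperedges: each of its boxes contains all $t$ points, hence meets the corresponding $t$ segments of $\cB_k$, and \Cref{obs:box_helly} produces a $K^{(r)}_{t,\dots,t}$.

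The paper runs this without splitting into the $e_k$. It applies \Cref{prop:segment_segment} to the multiset $\widetilde{\cB}$ of intersections $\bm{b}_1\cap\dots\cap\bm{b}_{r-1}$ versus $\cB_r$, with $s=T=4^{r-2}g_t(n_1,\dots,n_{r-1})$. This gives $e(\cH)<4\bigl(tn_1\cdots n_{r-1}+n_rT\bigr)\le 4^{r-1}g_t(\vn)$. Your version with points and the sum over $k$ would instead give only $\bigl((r-1)4^{r-2}+3\bigr)g_t(\vn)$ via \Cref{prop:point_segment}, which exceeds $4^{r-1}g_t(\vn)$ once $r\ge 5$. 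So on that route, the decomposition into the $e_k$ is exactly what to drop.
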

  
\begin{proof}
    We proceed by induction on $r \ge 2$. The base $r = 2$ case follows from \Cref{prop:segment_segment}. 
    
    Suppose henceforth that $r \ge 3$ and that the statement has been established for $r - 1$. Define
    \[
    \widetilde{\cB} = \cB_1 \cap \dots \cap \cB_{r-1} \eqdef \biggl\{ \bm{b}_1 \cap \dots \cap \bm{b}_{r-1} : \text{$\bm{b}_i \in \cB_i$ with $\bigcap\limits_{i=1}^{r-1} \bm{b}_i \ne \varnothing$} \biggr\}
    \]
    as a multiset of segments. Let $G$ be the intersection graph between $\widetilde{\cB}$ and $\cB_r$. Then $e(G) = e(\cH)$. 

    We claim that $G$ is $K_{T, t}$-free ($T$ from the $\widetilde{\cB}$-side), where $T \eqdef 4^{r-2} \cdot g_t(n_1, \dots, n_{r-1})$. Suppose to the contrary that $G$ contains a copy of $K_{T, t}$. Then there are $t$ segments $\bm{b}_{r, 1}, \dots, \bm{b}_{r, t} \in \cB_r$, each intersecting a fixed collection of $T$ distinct segments of the form $\bm{b}_{1, j} \cap \dots \cap \bm{b}_{r-1, j}$, where $\bm{b}_{i, j} \in \cB_i$ for $i \in [r-1]$ and $j \in [T]$. Let $\cB_r^* \eqdef \{\bm{b}_{r, 1}, \dots, \bm{b}_{r, t}\}$, and for each $i \in [r-1]$ define $\cB_i' \eqdef \bigl\{ \bm{b}_{i, j} : j \in [T] \bigr\}$, viewed as a submultiset of $\cB_i$ (counting distinct elements separately, even if they represent the same interval). Let $\cH'$ be the $(r-1)$-partite $(r-1)$-uniform intersection hypergraph of $\cB_1', \dots, \cB_{r-1}'$. We have $n_i' \eqdef |\cB_i'| \le n_i$ and $e(\cH') \ge T \ge 4^{r-1} \cdot g_t(n_1', \dots, n_{r-1}')$. Due to the inductive hypothesis, $\cH'$ contains $K_{t, \dots, t}^{(r-1)}$. So, for each $i \in [r-1]$, there exists a submultiset $\cB_i^* \subseteq \cB_i'$ of size $t$ such that every choice $\bm{b}_1^* \in \cB_1^*, \dots, \bm{b}_{r-1}^* \in \cB_{r-1}^*$ satisfies $\bm{b}_1^* \cap \dots \cap \bm{b}_{r-1}^* \ne \varnothing$. Since $\cB_i^* \subseteq \cB_i'$, each $\bm{b}_i^* \in \cB_i^*$ intersects every $\bm{b}_r^* \in \cB_r^*$. Hence, the fact that boxes have Helly number $2$ (\Cref{obs:box_helly}) implies that the families $\cB_1^*,\dots,\cB_r^*$ form a $K_{t,\dots,t}^{(r)}$ in $\cH$, a contradiction. Therefore, $G$ is $K_{T,t}$-free.
    
    Since there are at most $n_1 \cdots n_{r-1}$ segments in $\widetilde{\cB}$ and $n_r$ segments in $\cB_r$, \Cref{prop:segment_segment} shows
    \[
    e(\cH) = e(G) < 4(n_1 \cdots n_{r-1} \cdot t + n_r \cdot T) < 4^{r-1} \cdot g_t(\vn). 
    \]
    The inductive proof of \Cref{prop:r_segment} is complete. 
\end{proof}

\begin{proof}[Proof of \Cref{lem:upper_onedim}]
    Suppose $\vF = (F_1, \dots, F_r) \in \bigl\{ \varnothing, \{1\} \bigr\}^r$. For any $i \in [r]$ with $F_i = \varnothing$, we replace every point $p$ in $\cB_i$ by a segment containing $p$. Take the replacing segments to be sufficiently small so that all incidence relations remain intact. According to \Cref{prop:r_segment}, we conclude that
    \[
    z_1^{\vF}(\vn; t) \le z_1^{\vF'}(\vn; t) < 4^{r-1} g_t(\vn),
    \]
    where $\vF'= \bigl( \underbrace{\{1\}, \dots, \{1\}}_{r} \bigr)$.
\end{proof}

\end{document}